\DeclareSymbolFont{cyrletters}{OT2}{wncyr}{m}{n}
\DeclareMathSymbol{\Sha}{\mathalpha}{cyrletters}{"58}
\theoremstyle{definition}
\newtheorem{theorem}{Theorem}
\newtheorem*{theorem*}{Theorem}
\newtheorem{proposition}[theorem]{Proposition}
\newtheorem*{proposition*}{Proposition}
\newtheorem{lemma}[theorem]{Lemma}
\newtheorem*{lemma*}{Lemma}
\newtheorem{corollary}[theorem]{Corollary}
\newtheorem*{corollary*}{Corollary}
\newtheorem{example}[theorem]{Example}
\newtheorem*{example*}{Example}
\newtheorem{definition}[theorem]{Definition}
\newtheorem*{definition*}{Definition}
\newtheorem{remark}[theorem]{Remark}
\newtheorem*{remark*}{Remark}
\newtheorem*{fact*}{Fact}
\newtheorem*{observation*}{Obzervation}
\newtheorem*{claim*}{Claim}
\newtheorem*{notation*}{Notation}
\newtheorem*{convention*}{Convention}
\newtheorem*{assumption*}{Assumption}
\numberwithin{theorem}{section} 	 
\numberwithin{equation}{section}	 
\newcommand{\Z}{\mathbb{Z}}
\newcommand{\Q}{\mathbb{Q}}
\newcommand{\R}{\mathbb{R}}
\newcommand{\C}{\mathbb{C}}
\newcommand{\F}{\mathbb{F}}
\newcommand{\Sl}{\mathop{\mathrm{Sl}}\nolimits}
\newcommand{\Cl}{\mathop{\mathrm{Cl}}\nolimits}
\newcommand{\la}{\lambda}
\newcommand{\La}{\Lambda}
\newcommand{\al}{\alpha_{\lambda}}
\newcommand{\pe}{\varpi_{1}}
\newcommand{\E}{\mathscr{E}}
\newcommand{\zt}[1]{\zeta \left(#1 \right)}
\newcommand{\sZ}[1]{\textsf{Z}\left(#1 \right)}
\newcommand{\fr}[2]{\frac{#1}{#2}}
\newcommand{\ro}{\rho}
\newcommand{\rob}{\overline{\rho}}
\newcommand{\ol}[1]{\overline{#1}}
\newcommand{\x}[1]{\wp\left(#1\right)}
\newcommand{\y}[1]{\wp'\left(#1\right)}
\newcommand{\s}[1]{\Sl\left(#1\right)}
\newcommand{\vp}[1]{\varphi\left(#1\right)}
\newcommand{\sO}{\mathscr{O}}
\newcommand{\ASl}{\mathrm{ArcSl}}
\newcommand{\p}{\mathfrak{P}}
\begin{document}

\title{Congruence Relations Connecting Tate-Shafarevich Groups\\   with Bernoulli-Hurwitz Numbers by Elliptic Gauss Sums\\ in Eisenstein Integers Case}

\author{Akihiro GOTO}
\date{}
\maketitle

\begin{abstract}

There are classical congruences between the class number of the imaginary quadratic field $\Q(\sqrt{-p})$ for a rational prime $p>3$ and a Bernoulli number or an Euler number. Under the BSD conjecture on the $2$-parts of the leading term, Onishi obtained an elliptic generalization of these congruences, which gives congruences between the order of the Tate-Shafarevich group of certain elliptic curves with complex multiplication by the Gaussian integers $\Z[\sqrt{-1}]$ and Mordell-Weil rank 0, and a coefficient of power series expansion of an elliptic function associated to $\Z[\sqrt{-1}]$. In this paper, we provide Onishi's type congruences for the Eisenstein integers $\Z[\fr{-1+\sqrt{-3}}{2}]$.
\end{abstract}

\tableofcontents


\addcontentsline{toc}{section}{Introduction}
\section*{Introduction}
There is a classical theorem as follows:
\begin{theorem}(\cite{H}) \label{ct}
Let $p>3$ be a rational prime, $h(-p)$ the class number of $\Q(\sqrt{-p})$, then
$$
h(-p)\equiv \left\{
\begin{array}{ll}
-2B_{(p+1)/2} \bmod p & \text{if \ } p\equiv 3\bmod 4, \\
2^{-1}E_{(p-1)/2} \bmod p & \text{if \ }p\equiv 1\bmod 4,
\end{array}
\right.
$$
where $B_{n}$ and $E_{n}$ are the $n$-th Bernoulli and Euler numbers respectively, i.e., $\fr{t}{e^t-1}=\sum_{n=0}^{\infty}B_{n}\fr{t^n}{n!}$, $\fr{2}{e^t+e^{-t}}=\sum_{n=0}^{\infty}E_{n}\fr{t^n}{n!}$.
\end{theorem}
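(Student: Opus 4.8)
The plan is to reduce both congruences to a single statement about generalized Bernoulli numbers, and then bridge to $B_n$ and $E_n$ by a Teichm\"uller (Euler-criterion) argument. First I would invoke the analytic class number formula. For $p>3$ the field $\Q(\sqrt{-p})$ has only the units $\pm1$, so the number of roots of unity is $w=2$ and the formula reads $h(-p)=-B_{1,\chi}$, where $\chi$ is the odd quadratic character attached to the field and $B_{1,\chi}=\frac{1}{f}\sum_{a=1}^{f}\chi(a)a$ is the first generalized Bernoulli number of conductor $f$. Writing $\chi_p=\left(\frac{\cdot}{p}\right)$ for the Legendre symbol, one has $\chi=\chi_p$ (conductor $p$) when $p\equiv3\bmod4$ and $\chi=\chi_{-4}\chi_p$ (conductor $4p$) when $p\equiv1\bmod4$. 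Setting $\psi=\mathbf{1}$ in the first case and $\psi=\chi_{-4}$ in the second, both cases become $h(-p)=-B_{1,\psi\chi_p}$, so it suffices to compute $B_{1,\psi\chi_p}\bmod p$.

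Next I would pass from $B_{1,\psi\chi_p}$ to a Bernoulli number of large index modulo $p$. Euler's criterion says $\chi_p=\omega^{(p-1)/2}$, where $\omega$ is the Teichm\"uller character mod $p$, and the Teichm\"uller lift satisfies $\omega(a)\equiv a^{p}\pmod{p^2}$ for $p\nmid a$. Substituting this into $f\,B_{1,\psi\chi_p}=\sum_{a}\psi(a)\chi_p(a)a$ and discarding the terms with $p\mid a$ (which are $\equiv0\bmod p^2$ since the exponent exceeds $1$) turns the sum, modulo $p^2$, into $\sum_a\psi(a)a^{M}$ with $M=\frac{p(p-1)}{2}+1$. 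Extracting the coefficient of $p^{1}$ by the Bernoulli-polynomial power-sum formula together with the distribution relation $B_{M,\psi}=f^{M-1}\sum_{r=1}^{f}\psi(r)B_M(r/f)$ yields $\sum_a\psi(a)a^{M}\equiv f\,p\,B_{M,\psi}\pmod{p^2}$, hence $B_{1,\psi\chi_p}\equiv B_{M,\psi}\pmod p$.

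Finally I would reduce the large index $M$. Since $M\equiv\frac{p+1}{2}\pmod{p-1}$ and $M\equiv1\pmod p$, the Kummer congruence for generalized Bernoulli numbers (in its $\omega$-twisted form, the Euler factors at $p$ being $\equiv1\bmod p$) gives $B_{M,\psi}\equiv\frac{B_{M,\psi}}{M}\equiv\frac{B_{(p+1)/2,\psi}}{(p+1)/2}\equiv2\,B_{(p+1)/2,\psi}\pmod p$, using $\frac{p+1}{2}\equiv2^{-1}\bmod p$. For $\psi=\mathbf{1}$ this is $2B_{(p+1)/2}$, giving $h(-p)\equiv-2B_{(p+1)/2}$. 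For $\psi=\chi_{-4}$ I would invoke the normalization $E_n=2L(-n,\chi_{-4})=-\frac{2}{n+1}B_{n+1,\chi_{-4}}$ with $n=\frac{p-1}{2}$, whence $B_{(p+1)/2,\chi_{-4}}\equiv-\frac{1}{4}E_{(p-1)/2}\bmod p$ and $h(-p)\equiv-2B_{(p+1)/2,\chi_{-4}}\equiv2^{-1}E_{(p-1)/2}$.

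The main obstacle is the second step: because of the division by the conductor one must control the character-weighted power sum modulo $p^2$, not merely modulo $p$, so the lift $\omega(a)\equiv a^p\pmod{p^2}$ and the exact $p^{1}$-coefficient coming from the distribution relation have to be handled carefully. This is also where the two cases genuinely diverge: the factor $2$ in the $p\equiv3$ case and its inverse $2^{-1}$ in the $p\equiv1$ case both originate from $\frac{p+1}{2}\equiv2^{-1}$, but in the Euler case an extra factor $4$ enters through $E_n=2L(-n,\chi_{-4})$, and one must separately check that $B_{(p+1)/2,\chi_{-4}}$ is $p$-integral, which follows from $E_{(p-1)/2}\in\Z$. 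A secondary subtlety is that $\chi_p=\omega^{(p-1)/2}$ sits exactly at the half-period, so naive $p$-adic continuity of a single $L$-function is insufficient; the argument really uses the lift to the large index $M$ and the ordinary Kummer congruence applied there.
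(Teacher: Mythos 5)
This statement is Theorem~\ref{ct} of the paper, which is quoted in the Introduction purely as classical background with a citation to Hurwitz \cite{H}; the paper itself gives no proof, so there is no in-paper argument to compare yours against. On its own merits, your proposal is the standard modern derivation and is essentially correct: the analytic class number formula $h(-p)=-B_{1,\chi}$ (valid since $w=2$ for $p>3$), the identification $\chi=\psi\chi_p$ with $\psi=\mathbf{1}$ or $\chi_{-4}$, the Teichm\"uller lift $\chi_p(a)=\omega(a)^{(p-1)/2}\equiv a^{p(p-1)/2}\pmod{p^2}$ to control the character sum modulo $p^2$, the resulting congruence $B_{1,\psi\chi_p}\equiv B_{M,\psi}\pmod p$ with $M=\tfrac{p(p-1)}{2}+1$, the Kummer congruence to drop the index to $\tfrac{p+1}{2}$ (using $M\equiv\tfrac{p+1}{2}\pmod{p-1}$, $M\equiv1\pmod p$, $\tfrac{p+1}{2}\equiv2^{-1}\pmod p$), and the identity $B_{n+1,\chi_{-4}}=-\tfrac{n+1}{2}E_n$ all check out, including the parities ($\tfrac{p+1}{2}$ even when $p\equiv3\bmod4$, odd when $p\equiv1\bmod4$, matching the parity of $\psi$). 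Two small points you should tighten in a written version: you use $f$ both for the conductor of $\psi\chi_p$ and for the conductor of $\psi$ in the displayed congruence $\sum_a\psi(a)a^M\equiv f\,p\,B_{M,\psi}\pmod{p^2}$ (as written with $f$ the conductor of $\psi\chi_p$ this would be off by a factor of $p$; with $f=f_\psi$ it is correct and yields $B_{1,\psi\chi_p}\equiv B_{M,\psi}$ as you claim); and in the $k=2$ term of the power-sum expansion you need either $B_{M-1}=0$ (the case $\psi=\mathbf{1}$, where $M-1$ is odd) or the $p$-integrality of $B_{M-1,\chi_{-4}}$ to kill it modulo $p^2$ --- you flag this obstacle correctly, and it does go through.
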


As an elliptic generalization of these congruences, based on Asai's previous research of elliptic Gauss sums \cite{Asai}, Onishi studied a certain family $\{E_{\la}\}$ of elliptic curves over $\Q(i)$, with complex multiplication by $\Z[i]$ and Mordell-Weil rank $0$, parametrized by degree one primes $\la\in \Z[i]$, and showed\footnote{By using \cite{R}, the main result of \cite{O} can be easily improved in this form.} \cite{O} that the congruence relation
$$
\#\Sha(E_{\la}/\Q(i))\underset{\bmod \ell}{\equiv} \text{Hurwitz-type number}
$$
up to multiplication by powers of $2$, where $\ell$ is the norm prime of $\la$ satisfying $\ell\equiv 5 \bmod 8$, holds by proving the congruences between the coefficients of elliptic Gauss sums and Hurwitz-type numbers (defined as power series coefficients of certain elliptic functions), and that, moreover, under the BSD conjecture on the $2$-part of the leading term for the elliptic curve $E_{\la}$, the congruence above for $E_{\la}$ holds without ambiguity of multiplication by powers of $2$. Here, $\Sha(E_{\la}/\Q(i))$ is the Tate-Shafarevich group of the elliptic curve $E_{\la}/\Q(i)$.
The Hurwitz-type number of an elliptic function (with the period lattice $\Z[i]$) is an analogue of the Bernoulli number or the Euler number.


In this paper, we prove an analogue of Onishi's result for the Eisenstein integer ring $\Z[\fr{-1+\sqrt{-3}}{2}]$. To explain the main results of this paper precisely, we first introduce some notation and definitions: let $\ro:=e^{\fr{2\pi i}{3}}$, $\pe:=\displaystyle \int_{0}^{1}\fr{dt}{\sqrt[3]{(1-t^3)^2}}$ (the real period of the elliptic curve $y^2=4x^3-27$; see Lemma~\ref{ee} \eqref{weq}), and $\vp{u}:=\frac{6\wp(\pe u)}{9+\wp'(\pe u)}$, $\Sl(u):=\vp{\fr{u}{-3\pe}}$, where $\wp$ is the Weierstrass $\wp$ function with the period lattice $\pe\Z[\ro]$. We define the rational numbers $C_{n}, D_{n}$ as 
$$
\Sl(u)=:\sum_{n=0}^{\infty}C_{3n+1}u^{3n+1},\ \ \Sl(u)^{-1}=:\frac{1}{u}+\sum_{n=0}^{\infty}D_{3n+2}u^{3n+2}.
$$
($C_{3n+1}, D_{3n+2}$ have easy recurrence relations, respectively; see the proof of Lemma~\ref{denc} and Remark~\ref{remD}.)
The rational numbers $C_{3n+1}, D_{3n+2}$ of elliptic functions (with the period lattice $\Z[\ro]$) are analogues of the Bernoulli numbers or the Euler numbers and we call them {\bf Bernoulli-Hurwitz-type numbers}, named after Katz' paper \cite{Katz}. Let $\ell$ be a rational prime number such that $\ell\equiv 1\bmod 3$, $\la \in \Z[\ro]$ a prime element such that $\ell=\la\ol{\la}$ and $\la\equiv 1\bmod 3$.
We denote by $\chi_{\la}$ the cubic residue character over $(\Z[\rho]/(\la))^{\times}\simeq \F_{\ell}^{\times}$ and set $\widetilde{\la}:=\prod_{\chi_{\la}(\nu)=1}\vp{\fr{\nu}{\la}}$. (Then $\widetilde{\la}^3=\la$ holds; see Lemma~\ref{Epro} \eqref{E2}.) Let 
$$
G_{\la}(\chi_{\la}, \varphi):=\fr{1}{3}\sum_{\nu\in (\Z[\ro]/(\la))^{\times}}\chi_{\la}(\nu)\varphi\left(\fr{\nu}{\la}\right),\ \ G_{\la}(\chi_{\la}, \varphi^{-1}):=\fr{1}{3}\sum_{\nu \in (\Z[\ro]/(\la))^{\times}}\chi_{\la}(\nu)\varphi\left(\fr{\nu}{\la}\right)^{-1}.
$$
$G_{\la}(\chi_{\la}, \varphi)\ (\text{resp.}\ G_{\la}(\chi_{\la}, \varphi^{-1}))$ is called the {\bf elliptic Gauss sum} associated to $\chi_{\la}$ and $\varphi$ (resp. $\varphi^{-1}$). We define 
$$
\al:=\left\{
\begin{array}{ll}
G_{\la}(\chi_{\la}, \varphi)/\widetilde{\la}^2 & \ell \equiv 7 \bmod 9, \\
G_{\la}(\chi_{\la}, \varphi^{-1})/\widetilde{\la}^2 & \ell \equiv 4 \bmod 9.
\end{array}
\right.
$$
The number $\al$ is called the {\bf coefficient of the elliptic Gauss sum}. (Then, $\al \in \chi_{\la}(3)(1+3\Z)$ or $\al \in \ol{\chi_{\la}}(3)(-1+3\Z)$ hold; see Theorem~\ref{defco}.)
The following two theorems are the main results of this paper: 
\begin{theorem*} {\rm (Theorem~\ref{main1})}
The denominator of $C_{\fr{2}{3}(\ell-1)}$ (resp. $ D_{\fr{2}{3}(\ell-1)}$) cannot be divided by $\ell$ when $\ell\equiv 7 \bmod 9$ (resp. $\ell\equiv 4 \bmod 9$). We have
$$
\al \underset{\bmod \la}{\equiv} \left\{
\begin{array}{ll}
-\frac{1}{3}C_{\frac{2}{3}(\ell-1)} & \ell \equiv 7 \bmod 9, \\
-\frac{1}{3}D_{\frac{2}{3}(\ell-1)} & \ell \equiv 4 \bmod 9.
\end{array}
\right.
$$
\end{theorem*}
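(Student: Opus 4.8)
The plan is to expand the elliptic Gauss sum $\la$-adically as a power series in a uniformizer of the $\la$-torsion and to read off the single coefficient that survives reduction modulo $\la$. I treat $\ell\equiv 7\bmod 9$ throughout; the case $\ell\equiv 4\bmod 9$ is parallel, with $\varphi^{-1}$ and the expansion $\Sl(u)^{-1}=u^{-1}+\sum_{n}D_{3n+2}u^{3n+2}$ replacing $\varphi$ and $\Sl$. The structural fact I would establish first is that, since $y^2=4x^3-27$ has good reduction at $\la$ and CM by $\Z[\ro]$, the endomorphism $[\la]$ reduces modulo $\la$ to the purely inseparable Frobenius: under the CM action $[\la]$ acts on the cotangent space as $\la\equiv 0\bmod\la$, so it is inseparable of degree $\ell$. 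Hence every nonzero $\la$-torsion point $\nu/\la$ reduces to the origin and lies in the formal group, and each value $\vp{\nu/\la}$ is given by the $\la$-adically convergent power series of $\varphi$ at the origin. This is exactly what makes the formal manipulations below legitimate.

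Writing $\vp{v}=\sum_{n\ge 0}\gamma_{3n+1}v^{3n+1}$ with $\gamma_{3n+1}=(-3\pe)^{3n+1}C_{3n+1}$ (from $\Sl(u)=\vp{u/(-3\pe)}$) and putting $t:=\vp{1/\la}$, formally inverting $t=\vp{z}$ at $z=1/\la$ and substituting into $\vp{\nu z}$ expresses $\vp{\nu/\la}$ as a power series in $t$ whose $t^{3m+1}$-coefficient is a polynomial in $\nu$ with exponents $\equiv 1\bmod 3$, of degree $3m+1$, and with leading coefficient $\gamma_{3m+1}$. Using $\chi_{\la}(\nu)\equiv\nu^{(\ell-1)/3}\bmod\la$ under $(\Z[\ro]/(\la))^{\times}\simeq\F_{\ell}^{\times}$, I would compute
\[
G_{\la}(\chi_{\la},\varphi)\underset{\bmod \la}{\equiv}\fr{1}{3}\sum_{m\ge 0}\Big(\sum_{\nu\in\F_{\ell}^{\times}}\nu^{(\ell-1)/3}\,[t^{3m+1}]\vp{\nu/\la}\Big)\,t^{3m+1},
\]
and apply the power-sum identity $\sum_{\nu\in\F_{\ell}^{\times}}\nu^{j}\equiv -1\bmod\ell$ when $(\ell-1)\mid j$ and $\equiv 0$ otherwise. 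A monomial $\nu^{e}$ then contributes only when $e\equiv\fr{2}{3}(\ell-1)\bmod(\ell-1)$; the smallest such $e$ is $\fr{2}{3}(\ell-1)$, which is $\equiv 1\bmod 3$ precisely because $\ell\equiv 7\bmod 9$, and its lowest-valuation occurrence is as the leading term $\gamma_{\frac{2}{3}(\ell-1)}$ of the $t^{\frac{2}{3}(\ell-1)}$-coefficient. All other admissible monomials sit in strictly higher powers of $t$ and so carry strictly larger $\la$-valuation, whence
\[
G_{\la}(\chi_{\la},\varphi)\underset{\bmod \la}{\equiv}-\fr{1}{3}\,\gamma_{\frac{2}{3}(\ell-1)}\,t^{\frac{2}{3}(\ell-1)}+(\text{strictly larger }v_{\la}),
\]
the factor $-\fr{1}{3}$ being the product of the $\fr{1}{3}$ in the definition of $G_{\la}$ and the $-1$ from the power sum.

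Because the $(\ell-1)/3$ cubic residues give $\widetilde{\la}=\prod_{\chi_{\la}(\nu)=1}\vp{\nu/\la}$ with $\widetilde{\la}^3=\la$, one has $v_{\la}(\widetilde{\la}^2)=\fr{2}{3}=v_{\la}(t^{\frac{2}{3}(\ell-1)})$, so dividing by $\widetilde{\la}^2$ removes the uniformizer and leaves a $\la$-unit. It then remains to check that the transcendental factors cancel exactly, i.e.\ that $(-3\pe)^{\frac{2}{3}(\ell-1)}t^{\frac{2}{3}(\ell-1)}/\widetilde{\la}^2\equiv 1\bmod\la$, which converts $\gamma_{\frac{2}{3}(\ell-1)}=(-3\pe)^{\frac{2}{3}(\ell-1)}C_{\frac{2}{3}(\ell-1)}$ into $C_{\frac{2}{3}(\ell-1)}$ and yields $\al\equiv-\fr{1}{3}C_{\frac{2}{3}(\ell-1)}\bmod\la$. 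The statement that $\ell$ does not divide the denominator of $C_{\frac{2}{3}(\ell-1)}$ I would derive from the von Staudt--Clausen--type theory of Bernoulli-Hurwitz numbers (cf.\ \cite{Katz}) via the recurrence for $C_{3n+1}$ in Lemma~\ref{denc}: the index $\fr{2}{3}(\ell-1)$ is not a multiple of $\ell-1$, so $\ell$ is excluded from the denominator; this is moreover forced a posteriori, since $\al\in\chi_{\la}(3)(1+3\Z)$ is a $\la$-unit by Theorem~\ref{defco}.

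The main obstacle is the $\la$-adic bookkeeping rather than the combinatorics. Concretely, one must prove the $\ell$-integrality of the normalized coefficients governing the formal group law and $[\la]$ — equivalently the Kummer/von Staudt--Clausen congruences for the $C_{3n+1}$ — so that reducing the series modulo $\la$ and interchanging the summations are valid and no hidden denominator spoils the residue; and one must pin down the normalization constant relating $t^{\frac{2}{3}(\ell-1)}$ to $\widetilde{\la}^2$ \emph{exactly}, not merely up to valuation, since it is precisely this constant that carries the period $\pe$ and must cancel to leave the asserted rational congruence. Once these two points are secured, the case $\ell\equiv 4\bmod 9$ follows verbatim: there the expansion of $\varphi^{-1}$ has $\nu$-exponents $\equiv 2\bmod 3$, the surviving index is again $\fr{2}{3}(\ell-1)$ (now $\equiv 2\bmod 3$), and the leading coefficient is $D_{\frac{2}{3}(\ell-1)}$, which is exactly why $\varphi$ is paired with $C$ and $\varphi^{-1}$ with $D$.
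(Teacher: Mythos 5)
Your overall route is the paper's route: expand $\vp{\fr{\nu}{\la}}^{\pm1}$ as a power series in the uniformizer $\La=\vp{\fr{1}{\la}}$, use $\chi_{\la}(\nu)\equiv\nu^{(\ell-1)/3}$ and the power-sum identity to isolate the single surviving index $\fr{2}{3}(\ell-1)$, and compare with $\al\widetilde{\la}^{2}$. (The paper first restricts the sum to $\ker\chi_{\la}$ via Lemma~\ref{EGS}~\eqref{k} and sums a geometric progression in a primitive root $g$, rather than summing over all of $(\Z[\ro]/(\la))^{\times}$; this is an equivalent bookkeeping choice. Your formal-group framing of convergence is also a legitimate substitute for the paper's direct observation that the multiplication series $\Sl(ru)\in r\Sl(u)+\Sl(u)^4\Z[\fr{1}{3}][[\Sl(u)^3]]$ has $\Z[\fr{1}{3}]$-coefficients, so everything converges $\La$-adically in $\sO_{\la,\La}$.)

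There are, however, two concrete problems. First, your normalization is off: the period never enters. Since $\Sl(\nu u)=F_{\nu}(\Sl(u))$ with $F_{\nu}\in\nu T+T^4\Z[\fr{1}{3}][[T^3]]$ and $\vp{z}=\Sl(-3\pe z)$, one gets $\vp{\fr{\nu}{\la}}=F_{\nu}(\La)$ with the \emph{same} period-free coefficients; the leading ($\nu$-degree $3m+1$) coefficient of the $t^{3m+1}$-term is $C_{3m+1}=\gamma_{3m+1}/\gamma_{1}^{3m+1}$, not $\gamma_{3m+1}$. Consequently the ``remaining check'' you pose, $(-3\pe)^{\fr{2}{3}(\ell-1)}\La^{\fr{2}{3}(\ell-1)}/\widetilde{\la}^{2}\equiv 1\bmod\la$, is not the right statement (and is not even well posed, $\pe$ being transcendental). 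The step that genuinely must be proved is the exact unit congruence $\widetilde{\la}^{2}\equiv\La^{\fr{2}{3}(\ell-1)}\bmod\La^{\fr{2}{3}(\ell-1)+1}$, which you only establish at the level of valuations; the paper's Lemma~\ref{lala} proves it by expanding each factor $\vp{\fr{g^{3j}}{\la}}=g^{3j}\La(1+O(\La^{3}))$ and using $\prod_{j=1}^{(\ell-1)/3}g^{3j}\equiv-1\bmod\ell$, so that $\widetilde{\la}\equiv-\La^{(\ell-1)/3}$ and the sign disappears upon squaring. Second, your justification of the $\ell$-integrality of $C_{\fr{2}{3}(\ell-1)}$ (resp.\ $D_{\fr{2}{3}(\ell-1)}$) misfires: Katz's von Staudt--Clausen theorem concerns $BH_{6m}=(6m)!G_{6m}$, not these coefficients, and the ``a posteriori'' argument from $\al$ being a unit is circular (the congruence is meaningless until integrality is known). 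The paper gets this much more cheaply: the differential equation $\Sl^{(3)}=6\Sl^{4}-4\Sl$ gives $c_{3m+1}\in\Z$, hence $C_{3m+1}=c_{3m+1}/(3m+1)!\in\Z_{(\ell)}$ for $3m+1<\ell$; and the addition formula gives $2d_{6m+2}\in\Z[\fr{1}{3}]$ (Lemma~\ref{dend}), handling the $D$ case. With these two repairs your argument becomes the paper's proof.
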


\begin{theorem*} {\rm (Theorem~\ref{bsd} and Corollary~\ref{mainco})} \label{main2}
We write $\la=3m+1+3n\rho,\ m,n\in \Z$.
Then, for the elliptic curve $\E_{\la}:y^2=x^3+\fr{\la^2}{4}$, we have 
\begin{equation} 
\#\Sha(\E_{\la}/\Q(\ro))
=\left\{
\begin{array}{lll} \label{0B2}
|\al|^2/4& \underset{\bmod \ell}{\equiv}\left(3^{\fr{\ell-4}{3}}D_{\fr{2(\ell-1)}{3}}\right)^2/4 & \text{$\ell \equiv 4 \bmod 9$ and $n \equiv 0\bmod 3$}, \\
|\al|^2& \underset{\bmod \ell}{\equiv}\left(3^{\fr{\ell-4}{3}}D_{\fr{2(\ell-1)}{3}}\right)^2 & \text{$\ell \equiv 4 \bmod 9$ and $n \equiv \pm1 \bmod 3$}, \\
|\al|^2& \underset{\bmod \ell}{\equiv}\left(3^{\fr{\ell-7}{6}}C_{\fr{2(\ell-1)}{3}}\right)^2 &  \text{$\ell \equiv 7 \bmod 9$,}
\end{array}
\right.
\end{equation}
up to multiplication by powers of $2$, $3$. Moreover, for $p=2,3$, under the BSD conjecture on the $p$-part of the leading term for the elliptic curve $\E_{\la}$, the congruence \eqref{0B2} holds without the ambiguity of multiplication by powers of $p$.
\end{theorem*}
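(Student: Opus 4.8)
The plan is to treat the two halves of the statement separately: first the arithmetic identity of Theorem~\ref{bsd} expressing $\#\Sha(\E_\la/\Q(\ro))$ through the coefficient $\al$ of the elliptic Gauss sum, and then the congruence of Corollary~\ref{mainco}, obtained by feeding the first main theorem (Theorem~\ref{main1}) into that identity. For the first half I would start from the Birch--Swinnerton-Dyer formula for the rank-$0$ curve $\E_\la/\Q(\ro)$, in the schematic shape
$$
\#\Sha(\E_\la/\Q(\ro)) \;=\; \frac{L(\E_\la/\Q(\ro),1)\cdot \#\E_\la(\Q(\ro))_{\mathrm{tors}}^{2}}{\Omega_{\E_\la}\cdot \prod_{v}c_{v}},
$$
the regulator being trivial since the rank is $0$. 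Because $\E_\la$ has complex multiplication by $\Z[\ro]$, the $p$-part of this formula is a theorem for every prime $p\nmid 6$ by Rubin's results \cite{R} (the Iwasawa main conjecture for imaginary quadratic fields, together with the finiteness of $\Sha$ that follows from $L(\E_\la,1)\neq0$), whereas the $p=2,3$ parts are exactly what the BSD leading-term conjecture supplies. The torsion subgroup and the Tamagawa numbers $c_v$ of $y^2=x^3+\la^2/4$, as well as the field-discriminant factor $\sqrt{3}$ entering the normalization of $\Omega_{\E_\la}$, are all products of powers of $2$ and $3$, so they account for the entire unconditional ambiguity.

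The analytic heart is to identify $L(\E_\la/\Q(\ro),1)/\Omega_{\E_\la}$ with $|\al|^{2}$ up to powers of $2,3$. Since $\E_\la$ has complex multiplication, its Hasse--Weil $L$-function factors as $L(\E_\la/\Q(\ro),s)=L(s,\psi)L(s,\overline{\psi})$ for a Hecke Gr\"ossencharacter $\psi$ of $\Q(\ro)$, so the central value is a product of two complex-conjugate factors and naturally produces the square $|\al|^2$ (over $\Q(\ro)$, rather than a single $\al$ as over the base field). I would evaluate $L(1,\psi)$ by the Damerell / Eisenstein--Kronecker method, writing it as a period times an algebraic part, and then match that algebraic part to the elliptic Gauss sum $G_\la(\chi_\la,\varphi)$ via Asai's evaluation \cite{Asai}; dividing by $\widetilde{\la}^{2}$ and using $\widetilde{\la}^{3}=\la$ (Lemma~\ref{Epro}\,\eqref{E2}) turns $|G_\la(\chi_\la,\varphi)|^2$ into $|\al|^2$. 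Comparing the transcendental period appearing here with the N\'eron period in the BSD formula again costs only powers of $2,3$, consistent with the ambiguity already present.

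For the congruence of Corollary~\ref{mainco} I would combine this identity with Theorem~\ref{main1}. The decisive structural input is Theorem~\ref{defco}: $\al\in\chi_\la(3)(1+3\Z)$ or $\al\in\overline{\chi_\la}(3)(-1+3\Z)$, so $\al=\zeta\,a$ with $\zeta$ a cube root of unity and $a\in\Z$, whence $|\al|^2=a^2$ is the square of a \emph{rational} integer. From $\al\equiv -\tfrac13 C_{\frac23(\ell-1)}\bmod\la$ (the case $\ell\equiv7$) and the congruence $\chi_\la(3)\equiv 3^{(\ell-1)/3}\bmod\la$ for the cubic residue symbol, one gets $a\equiv -\tfrac13\,3^{-(\ell-1)/3}C_{\frac23(\ell-1)}\bmod\la$, so that
$$
|\al|^2=a^2\;\equiv\; 3^{-2-\frac{2(\ell-1)}{3}}\,C_{\frac23(\ell-1)}^{2}\;\equiv\;3^{\frac{\ell-7}{3}}\,C_{\frac23(\ell-1)}^{2}=\bigl(3^{\frac{\ell-7}{6}}C_{\frac23(\ell-1)}\bigr)^{2}\pmod{\la},
$$
the middle step using $3^{\ell-1}\equiv1\bmod\la$ to absorb the exponent difference $\ell-1$. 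Because every quantity here is rational and $\ell=\la\overline\la$ is split, a congruence modulo $\la$ between rationals automatically upgrades to a congruence modulo $\ell$; this both delivers the stated congruence and shows $\ell$ does not divide the relevant denominator, recovering the non-divisibility claim. The $\ell\equiv4\bmod9$ cases run identically with $\overline{\chi_\la}(3)$ and $D_{\frac23(\ell-1)}$, the exponents matching $\bigl(3^{\frac{\ell-4}{3}}D_{\frac23(\ell-1)}\bigr)^2$ even without the $3^{\ell-1}$ reduction.

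The step I expect to be the main obstacle is the exact $2$- and $3$-adic accounting that separates the three cases of \eqref{0B2} --- in particular the factor $1/4$ that appears only when $\ell\equiv4\bmod9$ and $n\equiv0\bmod3$. This requires computing $\#\E_\la(\Q(\ro))_{\mathrm{tors}}$, the Tamagawa numbers at the places above $2$ and $3$, and the precise ratio between the analytic period $\pe$ and the N\'eron period, all as functions of $\la=3m+1+3n\rho$ modulo $3$; the dependence on $n\bmod3$ governs the reduction type and hence the local factor responsible for the $4$. Making these powers cancel exactly --- unconditionally up to powers of $2,3$ via \cite{R}, and exactly for $p=2,3$ under the conjecture --- is where the real work lies, the preceding analytic and congruence steps being comparatively formal.
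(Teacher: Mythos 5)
Your proposal is correct and follows essentially the same route as the paper: Rubin's theorem plus the local data (torsion, Tamagawa numbers, period ratio $\varpi_{\la}^6/\pe^6=-27/\la^2$) for Theorem~\ref{bsd}, the factorization $L(\E_{\la}/\Q(\ro),s)=L(s,\widetilde{\chi_{\la}})L(s,\ol{\widetilde{\chi_{\la}}})$ with Asai's evaluation of $L(1,\widetilde{\chi_{\la}})$ for the identification with $|\al|^2$, and the combination of Theorem~\ref{main1} with $\al=\chi_{\la}(3)^{\pm1}a$, $a\in\Z$, $\chi_{\la}(3)\equiv 3^{(\ell-1)/3}\bmod\la$ for the congruence (this is exactly the paper's Lemma~\ref{ab}, including the upgrade from $\bmod\ \la$ to $\bmod\ \ell$ via rationality of both sides). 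Your exponent bookkeeping $(\ell-7)/6$ and $(\ell-4)/3$ checks out, and you correctly locate the case distinction in the Tamagawa number $\tau_{(1-\ro)}\in\{1,4\}$ governed by $n\bmod 3$.
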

Note that $\E_{\la}$ is an elliptic curve over $\Q(\ro)$ with complex multiplication by $\Z[\ro]$ and Mordell-Weil rank $0$, by \cite{CW} and the non-vanishing of $L(\E_{\la}/\Q(\ro),1)$ (due to \cite{Asai}), where $L(\E_{\la}/\Q(\ro),s)$ is the $L$-function of the elliptic curve $\E_{\la}/\Q(\ro)$ (see the proof of Theorem~\ref{bsd}).

We explain the outline of the proof of the main results roughly. Corollary~\ref{mainco} is proved by verifying the following relationship:
$$
\xymatrix{
\text{Bernoulli-Hurwitz-type number} \ar[r]_-{(1)} & \text{Elliptic Gauss sum} \ar[l] \ar[r]_-{(2)} & \text{Hecke $L$-value} \ar[r]_-{(3)} \ar[l] & \text{Elliptic $L$-value} \ar[d]_-{(4)} \ar[l] \\
 & & & \#\Sha \ar[u]
}
$$
The relationship (1) is the content of Theorem~\ref{main1}, which is proved by taking $\vp{\fr{1}{\la}}$-adic expansions of $\vp{\fr{\nu}{\la}}^{\pm1}$ and the elliptic Gauss sums and taking $\bmod \ \vp{\fr{1}{\la}}^{\fr{2(\ell-1)}{3}+1}$.
The relationship (2) is due to Asai \cite{Asai}.
The relationship (3) is the equality $L(s, \widetilde{\chi_{\la}})L(s, \ol{\widetilde{\chi_{\la}}})=L(\E_{\la}/\Q(\ro), s)$ given in Proposition~\ref{Deu}, which is directly proved by the cubic reciprocity law and expressing the local $L$-factors of $L(\E_{\la}/\Q(\ro), s)$ in terms of Jacobi sums, where $\widetilde{\chi_{\la}}$ is the Hecke character associated to $\chi_{\la}$, and $L(s, \widetilde{\chi_{\la}})$ is the Hecke $L$-series associated to $\widetilde{\chi_{\la}}$.
The relationship (4) is the content of Theorem~\ref{bsd}, which is shown by using Rubin's result \cite{R} and calculations of the real period of $\E_{\la}$, local Tamagawa numbers, and $\#\E_{\la}(\Q(\ro))_{\text{tors}}$.

The proofs of these results are similar to the ones of the main results of \cite{O}, approximately.
But there are some differences of the proofs, the phenomena and the outputs between this paper and \cite{O}.
First, the denominator of $d_{6n+2}$, which appears in Theorem~\ref{main1}, is easy, i.e., a power of $3$ (see Lemma~\ref{dend}), although the denominator of $H_{4n}$ in \cite{O} is more arithmetical (see \cite{Katz}).
Next, Theorem~\ref{bsd} implies $|\al|^2$ is divisible by $4$ for $\ell \equiv 4 \bmod 9$ and $n \equiv 0 \bmod 3$, while Proposition 3.9 of \cite{O} in the Gaussian integers case does not imply such divisibility. 

Note that the results of \cite{O} and this paper are studied only over the Gaussian and Eisenstein integers. Since the fields $\Q(i)$ and $\Q(\ro)$ are imaginary quadratic fields and cyclotomic fields at once, we think that the situation is easy. To extend the results to the imaginary quadratic field $\Q(\sqrt{-p})$ for 
a rational prime $p>3$, we may have to consider the inclusion $\Q(\sqrt{-p})\subset \Q(\zeta_{p})$ if $p\equiv 3 \bmod 4$, $\Q(\sqrt{-p}) \subset \Q(\zeta_{4p})$ if $p\equiv 1 \bmod 4$ at some point. 
\ \\

Finally, we explain the organization of this paper.
In Chapter 1, we study properties of certain elliptic functions whose period lattice is the Eisenstein integers. We use these properties to define the elliptic Gauss sums and their coefficients. In Chapter 2, we introduce the elliptic Gauss sums and show the main congruences (Theorem~\ref{main1}) between the Bernoulli-Hurwitz-type numbers and the coefficients of the elliptic Gauss sums.
In Chapter 3, we study certain Hecke characters, the Hecke $L$-series associated to them, certain elliptic curves $\E_{\la}$ for $\la$, and their $L$-functions relevant to our elliptic Gauss sums. Moreover, we show the main results (Theorem~\ref{bsd} and Corollary~\ref{mainco}).
In Appendix, we show some lemmas,  formulae and tables of $C_{n}, D_{n}$, etc. which are not used in the proofs of the main results of this paper but are of independent interest.

\section*{Acknowledgments}

The author would like to express his great gratitude to his advisors Professors Akio Tamagawa and Go Yamashita for their careful and constant advice for the study of this paper. Especially, Professor Yamashita suggested the research of elliptic Gauss sums to the author and corrected many errors of the draft. Moreover, the author is very thankful to them for guiding the author in the master seminar.

\section{Elliptic functions} \label{Ef}
In Chapter \ref{Ef}, we study properties of certain elliptic functions whose period lattice is the Eisenstein integers (or its constant multiple). In \S 1.1, we introduce elliptic functions $\Sl$, $\Cl$, $\varphi$, and $\psi$, and their function-theoretic properties. In \S 1.2, we study power series expansions of $\Sl(ru), \Sl(ru)^{-1} (r \in \Z\setminus \{0\})$ in terms of $\Sl(u)$, which will be used to show the main congruences in Chapter \ref{EGs}. In \S 1.3, we introduce Bernoulli-Hurwitz-type numbers, which will also be used to show the main congruences in Chapter \ref{EGs}. In \S 1.4, we study some arithmetic properties of the values of $\Sl(u)$ at division points.

Let $\ro$ be the cubic root $e^{2\pi i/3}=\fr{-1+\sqrt{-3}}{2}$ of unity, where the imaginary part of $\sqrt{-3}$ is $>0$.
\subsection{The basic properties of elliptic functions in the case of Eisenstein integers}
We denote by $\pe$ (See \cite[\S 1.1.1.]{Asai}) the real period given by
$$
\pe=\displaystyle \int_{0}^{1}\fr{dt}{\sqrt[3]{(1-t^3)^2}}=1.76663875 \cdots ,
$$
and let $\wp(u)$ denote Weierstrass' $\wp$ function with the period lattice $\pe\Z[\ro]$, so that $\wp'^{2}=4\wp^3-27$ (See Lemma~\ref{ee}). In this paper, $\overline{(\cdot)}$ denotes the complex conjugate. We write $$
W:=\{\pm 1, \pm \ro, \pm \rob\},\ W':=\{1,\ro,\rob\}.
$$

\begin{lemma} (The real period and the beta function)\label{pe}
$\pe=\fr{1}{3}B\left(\fr{1}{3},\fr{1}{3}\right)=\fr{1}{2\pi\sqrt{3}}\Gamma \left(\fr{1}{3}\right)^3=\fr{2^{\fr{1}{3}}}{3\sqrt{3}}B(\fr{1}{6},\fr{1}{2})$, where $B$ and $\Gamma$ denote the beta and gamma functions respectively.
\end{lemma}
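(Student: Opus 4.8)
The plan is to reduce every expression to the standard relation $B(x,y)=\Gamma(x)\Gamma(y)/\Gamma(x+y)$ and then invoke only two classical identities for the gamma function: the reflection formula $\Gamma(z)\Gamma(1-z)=\pi/\sin(\pi z)$ and the Legendre duplication formula $\Gamma(z)\Gamma(z+\fr12)=2^{1-2z}\sqrt{\pi}\,\Gamma(2z)$. There is no conceptual obstacle here; the whole content is a substitution followed by careful bookkeeping of gamma-factors.

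First I would establish the leftmost equality by a change of variables. Setting $s=t^3$, so that $t=s^{1/3}$ and $dt=\fr{1}{3}s^{-2/3}\,ds$, the defining integral $\pe=\int_{0}^{1}(1-t^3)^{-2/3}\,dt$ becomes $\fr{1}{3}\int_{0}^{1}s^{-2/3}(1-s)^{-2/3}\,ds=\fr{1}{3}\int_{0}^{1}s^{\fr13-1}(1-s)^{\fr13-1}\,ds=\fr{1}{3}B(\fr13,\fr13)$, which is exactly the first claimed identity.

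Next, for the middle equality I would write $B(\fr13,\fr13)=\Gamma(\fr13)^2/\Gamma(\fr23)$, so that it suffices to verify $\Gamma(\fr13)^2/(3\Gamma(\fr23))=\Gamma(\fr13)^3/(2\pi\sqrt{3})$, i.e. $\Gamma(\fr13)\Gamma(\fr23)=2\pi/\sqrt{3}$. This is immediate from the reflection formula at $z=\fr13$, using $\sin(\pi/3)=\sqrt{3}/2$.

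Finally, for the rightmost equality I would expand $B(\fr16,\fr12)=\Gamma(\fr16)\Gamma(\fr12)/\Gamma(\fr23)=\Gamma(\fr16)\sqrt{\pi}/\Gamma(\fr23)$ and eliminate $\Gamma(\fr16)$ by the duplication formula at $z=\fr16$, which gives $\Gamma(\fr16)\Gamma(\fr23)=2^{2/3}\sqrt{\pi}\,\Gamma(\fr13)$. Substituting this into $\fr{2^{1/3}}{3\sqrt{3}}B(\fr16,\fr12)$ produces $\fr{2}{3\sqrt{3}}\cdot\fr{\pi\,\Gamma(\fr13)}{\Gamma(\fr23)^2}$, and then replacing $\Gamma(\fr23)$ by $2\pi/(\sqrt{3}\,\Gamma(\fr13))$ (the reflection formula once more) collapses this to $\Gamma(\fr13)^3/(2\pi\sqrt{3})$, matching the middle expression. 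The only step demanding care is the last assembly, where I would organize the computation so that the duplication formula is applied exactly once and the reflection formula exactly once, so that the powers of $2$ and the conversions between $\Gamma(\fr23)$ and $\Gamma(\fr13)$ do not accumulate errors.
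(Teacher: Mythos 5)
Your proposal is correct and follows essentially the same route as the paper: the substitution $s=t^3$ to get $\fr{1}{3}B(\fr{1}{3},\fr{1}{3})$, the reflection formula at $z=\fr{1}{3}$ for the middle equality, and the Legendre duplication formula at $z=\fr{1}{6}$ for the last. The only difference is cosmetic bookkeeping (the paper solves for $\Gamma(\fr{1}{6})$ explicitly in terms of $\Gamma(\fr{1}{3})^2$ before assembling, while you eliminate it directly), so no further comment is needed.
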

\begin{proof}
Let $g$ denote $\Gamma(\fr{1}{3})$. Then we have $\Gamma(\fr{2}{3})=\fr{2\pi}{\sqrt{3}g}$ by the formula $\Gamma(s)\Gamma(1-s)=\fr{\pi}{\mathrm{sin}(\pi s)}$.
We also have $\Gamma(\fr{1}{6})=\fr{\sqrt{3}}{\sqrt{\pi}2^{\fr{1}{3}}}g^2$, since $\Gamma(\fr{1}{6})=\fr{2\sqrt{\pi}}{2^{\fr{1}{3}}}\fr{\Gamma(\fr{1}{3})}{\Gamma(\fr{2}{3})}=\fr{2\sqrt{\pi}}{2^{\fr{1}{3}}}\fr{g}{2\pi/\sqrt{3}g}=\fr{\sqrt{3}}{\sqrt{\pi}2^{\fr{1}{3}}}g^2$ by putting $s=\fr{1}{6}$ in the formula $\Gamma(s)\Gamma(s+\fr{1}{2})=\fr{2\sqrt{\pi}}{2^{2s}}\Gamma(2s)$.
Then, $B(\fr{1}{6},\fr{1}{2})=\fr{\Gamma(\fr{1}{6})\Gamma(\fr{1}{2})}{\Gamma(\fr{2}{3})}=\fr{(\sqrt{3}/\sqrt{\pi}2^{\fr{1}{3}})g^2\sqrt{\pi}}{2\pi/\sqrt{3}g}=\fr{3}{2^{\fr{4}{3}}\pi}g^3$
, by the formula $\Gamma(\fr{1}{2})=\sqrt{\pi}$.
By definition and the formula $B(s,t)=\fr{\Gamma(s)\Gamma(t)}{\Gamma(s+t)}$,
\begin{align*}
\pe&=\int_{0}^{1}\fr{dt}{\sqrt[3]{(1-t^3)^2}}=\fr{1}{3}\int_{0}^1x^{-\fr{2}{3}}(1-x)^{-\fr{2}{3}}dx=\fr{1}{3}B\left(\fr{1}{3},\fr{1}{3}\right)=\fr{1}{3}\fr{g^2}{\Gamma(\fr{2}{3})}=\fr{g^3}{2\pi\sqrt{3}}=\fr{2^{\fr{1}{3}}}{3\sqrt{3}}B\left(\fr{1}{6},\fr{1}{2}\right),
\end{align*}
where $x=t^3$.
\end{proof}

\begin{lemma} (Elementary properties of $\wp$-function with the period lattice $\pe \Z[\ro]$) \label{ee}  We have
\begin{enumerate}[(1)]
\item \label{rwp} $\x{\ro u}=\ro \x{u}, \x{-u}=\x{u}, \y{\ro u}=\y{u}, \y{-u}=-\y{u}$,
\item \label{cj} $\wp(\overline{u})=\overline{\wp(u)}, \wp'(\overline{u})=\overline{\wp'(u)}$, thus if $u\in \mathbb{R}$, $\wp(u), \wp'(u)\in \mathbb{R}$,

\item The function $\wp$ satisfies $\wp'^2=4\wp^3-27$, \label{weq}
\item $\x{\fr{\pe}{3}}=3,\ \x{\fr{\ro}{3}\pe}=3\ro,\ \x{\fr{\rob}{3}\pe}=3\rob,\ \x{\fr{1-\ro}{3}\pe}=0,$\label{val3}
\item $\y{\fr{\pe}{3}}=\y{\fr{\ro}{3}\pe}=\y{\fr{\rob}{3}\pe}=-9,\ \y{\fr{1-\ro}{3}\pe}=-3\sqrt{-3}$. \label{val3'}

\end{enumerate}
\end{lemma}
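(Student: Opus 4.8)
The plan is to extract all five statements from two symmetries of the period lattice $L := \pe\Z[\ro]$ together with the differential equation and the explicit value of $\pe$ in Lemma~\ref{pe}. The two symmetries are: $L$ is stable under multiplication by $\ro$ (because $\ro\,\Z[\ro] = \Z[\ro]$), and $L$ is stable under complex conjugation (because $\pe \in \R$ and $\ol{\ro} = \rob = -1-\ro \in \Z[\ro]$, so $\ol{\Z[\ro]} = \Z[\ro]$). For (1), I would invoke the homogeneity $\wp(cu;cL) = c^{-2}\wp(u;L)$ with $c = \ro$ and $\ro L = L$ to get $\x{\ro u} = \ro^{-2}\x{u} = \ro\,\x{u}$ (using $\ro^3 = 1$), then differentiate to obtain $\y{\ro u} = \y{u}$; the identities $\x{-u}=\x{u}$ and $\y{-u}=-\y{u}$ are just evenness of $\wp$ and oddness of $\wp'$. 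For (2), applying complex conjugation termwise to the defining series $\wp(u;L) = u^{-2} + \sum_{0 \ne \omega \in L}\bigl((u-\omega)^{-2} - \omega^{-2}\bigr)$ and using $\ol L = L$ gives $\ol{\wp(u)} = \wp(\ol u)$, and likewise for $\wp'$; reality on $\R$ is the case $u = \ol u$.

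For (3), I would first show $g_2 = 0$: since $\ro L = L$, the weight-four Eisenstein series satisfies $g_2(L) = g_2(\ro L) = \ro^{-4}g_2(L) = \ro^2 g_2(L)$, and $\ro^2 \ne 1$ forces $g_2(L) = 0$, so $\wp'^2 = 4\wp^3 - g_3$ for $g_3 = g_3(L)$. To pin down $g_3 = 27$ I would compute the real period analytically: the real period of $L$ is $\pe$, and for the curve $y^2 = 4x^3 - g_3$ with real root $e = (g_3/4)^{1/3}$ the substitution $u = g_3/(4x^3)$ turns $2\int_e^\infty \fr{dx}{\sqrt{4x^3-g_3}}$ into $\fr{2^{1/3}}{3}\,g_3^{-1/6}B(\fr16,\fr12)$. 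Comparing with $\pe = \fr{2^{1/3}}{3\sqrt3}B(\fr16,\fr12)$ from Lemma~\ref{pe} yields $g_3^{-1/6} = 3^{-1/2}$, i.e.\ $g_3 = 27$.

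For (4) and (5), I would use that the nonzero $3$-torsion of $\C/L$ is represented (up to sign) by $\pe/3,\ \ro\pe/3,\ \rob\pe/3$ and $u_0 := (1-\ro)\pe/3$. Computing the $3$-division polynomial for $\wp'^2 = 4\wp^3 - 27$ gives $3\wp(\wp^3 - 27)$, so the four $3$-torsion $\wp$-values are exactly $0, 3, 3\ro, 3\rob$. Since $(\ro - 1)u_0 = \ro\pe \in L$, one has $\ro u_0 \equiv u_0 \pmod L$, so (1) gives $\x{u_0} = \x{\ro u_0} = \ro\,\x{u_0}$ and hence $\x{u_0} = 0$. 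For the other three points, (1) shows $\x{\ro\pe/3} = \ro\,\x{\pe/3}$ and $\x{\rob\pe/3} = \ro^2\x{\pe/3}$, so their values are $\x{\pe/3}, \ro\,\x{\pe/3}, \ro^2\x{\pe/3}$, which must be $\{3, 3\ro, 3\rob\}$; as $\x{\pe/3}$ is real by (2), it equals $3$, and the rest follow. Feeding these into $\wp'^2 = 4\wp^3 - 27$ gives $\y{\pe/3}^2 = 81$ and $\y{u_0}^2 = -27$, with $\y{\ro\pe/3} = \y{\rob\pe/3} = \y{\pe/3}$ by (1).

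The only delicate points are the signs, and I expect these to be the main obstacle. For $\y{\pe/3}$ I would argue that $\wp$ is real and strictly decreasing on $(0,\pe/2)$ (from $+\infty$ to the real $2$-torsion value), forcing $\y{\pe/3} < 0$, hence $-9$. For $\y{u_0} = \pm 3\sqrt{-3}$, the symmetries alone are insufficient: since $\ol{u_0} \equiv -u_0 \pmod L$, (2) only yields $\ol{\y{u_0}} = -\y{u_0}$, i.e.\ $\y{u_0}$ is purely imaginary. To fix the sign I would apply the addition law along the decomposition $u_0 = \fr{\pe}{3} + \bigl(-\fr{\ro\pe}{3}\bigr)$, using the already-determined values $(\x{\pe/3},\y{\pe/3}) = (3,-9)$ and $(\x{-\ro\pe/3},\y{-\ro\pe/3}) = (3\ro, 9)$; the line through these two points meets the cubic at the point with $x$-coordinate $0$ and $y$-coordinate $3\sqrt{-3}$, which is $-u_0$, so $\y{u_0} = -3\sqrt{-3}$.
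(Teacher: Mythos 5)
Your proposal is correct, and parts (1), (2), (3), and the determination of $\y{\fr{\pe}{3}}=-9$ run along essentially the same lines as the paper (series manipulation for the symmetries, $G_{4}=0$ by $\ro$-invariance, the beta-integral comparison with Lemma~\ref{pe} to pin down the constant $27$, and the sign of $\wp'$ on $(0,\fr{\pe}{2})$). Where you genuinely diverge is in part (4) and in the last sign of part (5). For the values $3,3\ro,3\rob$ the paper does not use the $3$-division polynomial: it applies the addition formula to $\fr{1-\ro}{3}\pe+\fr{\ro}{3}\pe=\fr{\pe}{3}$, obtains a relation (its equation \eqref{val3''}) involving $\x{\fr{\pe}{3}}$, $\y{\fr{\pe}{3}}$ and $\y{\fr{1-\ro}{3}\pe}$, and extracts $\x{\fr{\pe}{3}}^3=27$ by taking real parts, using that $\y{\fr{1-\ro}{3}\pe}$ is purely imaginary. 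Your route via $\psi_{3}=3x(x^{3}-27)$ is cleaner in that it decouples the computation of the $\wp$-values from that of the $\wp'$-values and avoids the real-part trick, at the cost of importing the division polynomial as an external fact; the paper's route is self-contained but entangles (4) and (5), since it must reuse \eqref{val3''} afterwards to solve for $\y{\fr{1-\ro}{3}\pe}=-3\sqrt{-3}$. Your chord computation through $(3,-9)$ and $(3\ro,9)$ is of course the same addition formula in geometric clothing, and your arithmetic checks out (the third intersection is $(0,3\sqrt{-3})=(\x{-u_{0}},\y{-u_{0}})$), so both methods fix the last sign correctly. The one point worth making explicit if you write this up is the standing fact that the roots of $\psi_{3}$ are precisely the $x$-coordinates of the four nonzero $3$-torsion classes, each occurring once, so that the multiset $\{\x{u_{0}},\x{\fr{\pe}{3}},\x{\fr{\ro}{3}\pe},\x{\fr{\rob}{3}\pe}\}$ really is $\{0,3,3\ro,3\rob\}$; with that cited, your argument is complete.
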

\begin{proof}
\eqref{rwp}
Since $\x{u}=\fr{1}{u^2}+\sum_{0\neq \omega \in \pe \Z[\ro]}\left(\fr{1}{(u-\omega)^2}-\fr{1}{\omega^2}\right)$ and $\ro, -1 \in (\Z[\ro])^{\times}$,
\begin{align*}
\x{\ro u}&=\fr{1}{(\ro u)^2}+\sum_{0\neq \omega \in \pe \Z[\ro]}\left(\fr{1}{(\ro u-\ro \omega)^2}-\fr{1}{(\ro \omega)^2}\right)=\fr{1}{\ro^2}\left(\fr{1}{u^2}+\sum_{0\neq \omega \in \pe \Z[\ro]}\left(\fr{1}{(u-\omega)^2}-\fr{1}{\omega^2}\right)\right)=\fr{1}{\ro^2}\x{u}=\ro \x{u},\\
\x{-u}&=\fr{1}{(-u)^2}+\sum_{0\neq \omega \in \pe \Z[\ro]}\left(\fr{1}{(-u-(-\omega))^2}-\fr{1}{(-\omega)^2}\right)=\fr{1}{u^2}+\sum_{0\neq \omega \in \pe \Z[\ro]}\left(\fr{1}{(u-\omega)^2}-\fr{1}{\omega^2}\right)=\x{u}.
\end{align*}
Differentiating both hand sides of
$
\x{\ro u}=\ro \x{u}, \x{-u}=\x{u},
$
we also get
$
\y{\ro u}=\y{u}, \y{-u}=-\y{u}
$ respectively.

\eqref{cj}
It is obvious, since $\overline{\Z[\ro]}=\Z[\ro]$.

\eqref{weq}
$\wp'^2=4\wp^3-60G_{4}\wp-140G_{6}$, where
$G_{6}:=\sum_{0\neq \omega \in \pe \Z[\ro]}\fr{1}{\omega^6}$, and
$$
G_{4}:=\sum_{0\neq \omega \in \pe \Z[\ro]}\fr{1}{\omega^4}
=\sum_{0\neq \omega \in \pe \Z[\ro]}\fr{1}{(\ro \omega)^4}
=\fr{1}{\ro^4}\sum_{0\neq \omega \in \pe \Z[\ro]}\fr{1}{\omega^4}
=\overline{\ro}\sum_{0\neq \omega \in \pe \Z[\ro]}\fr{1}{\omega^4}=\overline{\ro}G_{4}.
$$
Thus, $G_{4}=0$ and $\wp'^2=4\wp^3-140G_{6}=4(\wp-\wp(\fr{1}{2}\pe))(\wp-\wp(\fr{\ro}{2}\pe))(\wp-\wp(\fr{1+\ro}{2}\pe))$. We denote $e:=\wp(\fr{\pe}{2})$.\\
 Then, since $\ro e=\wp(\fr{\ro}{2}\pe),\  \overline{\ro}e=\wp(\fr{\overline{\ro}}{2}\pe)=\wp(-\fr{1+\ro}{2}\pe)=\wp(\fr{1+\ro}{2}\pe)$ by \eqref{rwp}, we have $4e^3=4\wp(\fr{\pe}{2})\wp(\fr{\ro}{2}\pe)\wp(\fr{1+\ro}{2}\pe)=140G_{6}$.
We also have, by Lemma~\ref{pe},
\begin{align*}
\fr{\pe}{2}&=\int_{0}^{\fr{\pe}{2}}du=\int_{\infty}^{e}\fr{d(\wp(u))}{\wp'(u)}=\int_{e}^{\infty}\fr{dx}{\sqrt{4x^3-4e^3}} =\fr{1}{2e^{\fr{1}{2}}}\int_{1}^{\infty}\fr{dx}{\sqrt{x^3-1}}\\
&=\fr{1}{2e^{\fr{1}{2}}}\fr{1}{3}\int_{0}^{1}y^{-\fr{5}{6}}(1-y)^{-\fr{1}{2}}dy=\fr{1}{2e^{\fr{1}{2}}}\fr{1}{3}B\left(\fr{1}{6},\fr{1}{2}\right)=\fr{1}{e^{\fr{1}{2}}}\fr{\sqrt{3}}{2^{\fr{4}{3}}}\pe,
\end{align*}
where $y=x^{-3}$. Hence $4e^3=27$.

\eqref{val3}
By \eqref{rwp} and the periodicity of $\wp$, we have $\wp(\fr{1-\ro}{3}\pe)=\wp(\fr{1+2\ro}{3}\pe)=\wp(\fr{(1+\ro)+\ro}{3}\pe)$
$=\wp(\fr{-\ro^2+\ro}{3}\pe)=\ro\wp(\fr{-\ro+1}{3}\pe)$.
Hence,
\begin{equation} \label{z}
\wp\left(\fr{1-\ro}{3}\pe\right)=0.
\end{equation}
By $\wp'^2=4\wp^3-27$, we have
\begin{equation} \label{27}
\wp'\left(\fr{1-\ro}{3}\pe\right)^2=-27.
\end{equation}
In particular, we have
\begin{equation} \label{ir}
\wp'\left(\fr{1-\ro}{3}\pe\right)\in i\mathbb{R}.
\end{equation}
By putting $u_{1}=\fr{1-\ro}{3}\pe$ and $u_{2}=\fr{\ro}{3}\pe$ in the addition formula (see, for example, \cite[Chap. XX, 20.3]{WW})
$$
\wp(u_{1}+u_{2})=-\wp(u_{1})-\wp(u_{2})+\fr{1}{4}\left(\fr{\wp'(u_{1})-\wp'(u_{2})}{\wp(u_{1})-\wp(u_{2})}\right)^2,
$$
\eqref{rwp} and \eqref{z}, we have

\begin{align*}
\wp\left(\fr{\pe}{3}\right)
&=-\wp\left(\fr{1-\ro}{3}\pe\right)-\wp\left(\fr{\ro}{3}\pe\right)+\fr{1}{4}\left(\fr{\wp'\left(\fr{1-\ro}{3}\pe\right)-\wp'\left(\fr{\ro}{3}\pe\right)}{\wp\left(\fr{1-\ro}{3}\pe\right)-\wp\left(\fr{\ro}{3}\pe \right)}\right)^2 \\
&=-\ro \wp\left(\fr{\pe}{3}\right)+\fr{1}{4}\left(\fr{\wp'\left(\fr{1-\ro}{3}\pe\right)-\wp'\left(\fr{\pe}{3}\right)}{-\ro\wp\left(\fr{\pe}{3}\right)}\right)^2,
\end{align*}
thus, by \eqref{27},
\begin{align}
-4\ro \wp\left(\fr{\pe}{3}\right)^3&=(1+\ro)\wp\left(\fr{\pe}{3}\right)4\left(-\ro\wp\left(\fr{\pe}{3}\right)\right)^2=\left(\wp'\left(\fr{1-\ro}{3}\pe\right)-\wp'\left(\fr{\pe}{3}\right)\right)^2 \nonumber \\
&=-27+\wp'\left(\fr{\pe}{3}\right)^2-2\wp'\left(\fr{1-\ro}{3}\pe\right)\wp'\left(\fr{\pe}{3}\right). \label{val3''}
\end{align}
By taking the real part of the equation \eqref{val3''}, we get
$$
2\wp\left(\fr{\pe}{3}\right)^3=-27+\wp'\left(\fr{\pe}{3}\right)^2=-27+4\wp\left(\fr{\pe}{3}\right)^3-27,
$$
since \eqref{weq} and \eqref{ir} and $\wp(\fr{\pe}{3}),\wp'(\fr{\pe}{3})\in \mathbb{R}$ by \eqref{cj}.
Hence we have $\wp(\fr{\pe}{3})^3=27$. Since $\wp(\fr{\pe}{3})\in \mathbb{R}$, we obtain $\wp(\fr{\pe}{3})=3$.
From $\wp(\fr{\pe}{3})=3$ and \eqref{rwp}, we get also $\wp \left(\fr{\ro}{3}\pe\right)=3\ro$ and $\wp \left(\fr{\overline{\ro}}{3}\pe\right)=3\overline{\ro}$.


\eqref{val3'}
The divisor of $\wp'(u)$ is $-3(0)+(\fr{\pe}{2})+(\fr{\ro \pe}{2})+(\fr{(1+\ro)\pe}{2})\ (\bmod \pe \Z[\ro])$.
Then, the function $(0, \pe)\to \mathbb{R},\ x\mapsto \wp'(x)$ has a unique zero at $x=\fr{\pe}{2}$, where $(0, \pe)$ is an open interval. Since $\lim_{x\to +0}\wp'(x)=-\infty$ from $\wp'(u)=-\fr{1}{u^3}+\cdots$ around $u=0$, it holds $\wp'(x)<0$ for $x\in (0, \fr{\pe}{2})$. Especially $\wp'(\fr{\pe}{3})<0$.
Since $\wp(\fr{\pe}{3})=3$ (by \eqref{val3}) and $\wp'^2=4\wp^3-27$ (by \eqref{weq}), we get $\wp'(\fr{\pe}{3})^2=81$. Then, we get $\wp'(\fr{\pe}{3})=-9$.
By \eqref{rwp}, we also get $\wp'(\fr{\pe}{3})=\wp'(\fr{\ro}{3}\pe)=\wp'(\fr{\overline{\ro}}{3}\pe)=-9$.
By \eqref{val3''} and \eqref{val3}, we get
$$
-4\ro\cdot 27=-4\ro\wp\left(\fr{\pe}{3}\right)^3=-27+\wp'\left(\fr{\pe}{3}\right)^2-2\wp'\left(\fr{1-\ro}{3}\pe\right)\wp'\left(\fr{\pe}{3}\right)=-27+(-9)^2-2\wp'\left(\fr{1-\ro}{3}\pe\right)(-9).
$$
Hence we have $\wp'(\fr{1-\ro}{3}\pe)=\fr{-4\ro\cdot27+27-(-9)^2}{18}=-3(2\ro+1)=-3\sqrt{-3}$.

\end{proof}

\begin{definition} (\cite[Difinition1.1, 1.2]{Asai} for $\textsf{Z}, \varphi, \psi$)
Let $\zt{u}$ denote Weierstrass' $\zeta$ function associated to the lattice $\pe \Z[\ro]$. We define
\begin{align*}
\sZ{u}&:=\zt{\pe u}-\fr{2\pi}{\sqrt{3}\pe}\ol{u},\\
\varphi(u)&:=\fr{1}{3}\left\{\sZ{u-\fr{1}{3}}+\rob\sZ{u-\fr{\ro}{3}}+\ro\sZ{u-\fr{\rob}{3}}\right\}, \\
\psi(u)&:=-\fr{1}{3}\left\{\sZ{u-\fr{1}{3}}+ \ro\sZ{u-\fr{\ro}{3}}+\rob\sZ{u-\fr{\rob}{3}}\right\},\\
\Sl(u)&:=\varphi\left(\fr{u}{-3\pe}\right),\ \ \Cl(u):=\psi\left(\fr{u}{-3\pe}\right).
\end{align*}
\end{definition}
Note also that \begin{equation} \label{addz}
\sZ{u+v}=\sZ{u}+\sZ{v}+\fr{1}{2}\fr{\wp'(\pe u)-\wp'(\pe v)}{\wp(\pe u)-\wp(\pe v)},
\end{equation}
which follows from the addition formula (see, for example, \cite[Chap. XX, 20.53, example 2]{WW})
\begin{equation} \label{addzt}
\zt{u+v}=\zt{u}+\zt{v}+\fr{1}{2}\fr{\wp'(u)-\wp'(v)}{\wp(u)-\wp(v)}.
\end{equation}

We recall the basic properties of these functions stated in \cite{Asai} without proofs:
\begin{lemma} (Elementary properties of $\zeta, \textsf{Z}, \varphi, \psi, \Sl$, and $\Cl$)(cf. \cite[\S 1.1.1]{Asai} for \eqref{phi} and \eqref{r}--\eqref{o}) \label{pro}
The following hold:
\begin{enumerate}[(1)]
\item $\zt{\ro u}=\rob \zt{u},\ \zt{-u}=-\zt{u},\ \zt{\fr{\pe}{2}}=\fr{\pi}{\sqrt{3}\pe},\ \zt{\fr{\ro}{2}\pe}=\fr{\pi}{\sqrt{3}\pe}\rob$. \label{zt}

\item The function $\textsf{Z}$\ is a non-holomorphic periodic function with the period lattice $ \Z[\ro]$, and satisfies $\sZ{\ro u}=\rob\sZ{u},\ \sZ{-u}=-\sZ{u},\ \sZ{\fr{1}{3}}=1,\ \sZ{\fr{1-\ro}{3}}=0$. \label{Z}
\item $\displaystyle \varphi(u)=\fr{6\wp(\pe u)}{9+\wp'(\pe u)}, \psi(u)=\fr{-9+\wp'(\pe u)}{9+\wp'(\pe u)}$. Thus, $\varphi, \psi$ are elliptic functions with the period lattice $\Z[\ro]$, and $\Sl(u), \Cl(u)$ are elliptic functions with the period lattice $3\pe\Z[\ro]$.
\label{phi}

\item $\displaystyle \wp(\pe u)=\fr{3\varphi(u)}{1-\psi(u)},\wp'(\pe u)=9\fr{1+\psi(u)}{1-\psi(u)}$. \label{psi}


\item $\varphi(\ro u)=\ro \varphi(u), \Sl(\ro u)=\ro \Sl(u), \psi(\ro u)=\psi(u), \Cl(\ro u)=\Cl(u)$. \\
$\varphi(\ol{u})=\ol{\varphi(u)}, \Sl(\ol{u})=\ol{\Sl(u)}, \psi(\ol{u})=\ol{\psi(u)}, \Cl(\ol{u})=\ol{\Cl(u)}$, thus, $\varphi(u), \Sl(u), \psi(u), \Cl(u)\in \R$ for $u\in \R$.\label{r}


\item $\varphi'(u)=-3\pe \psi(u)^2, \psi'(u)=3\pe \varphi(u)^2, \Sl'(u)=\Cl(u)^2, \Cl'(u)=-\Sl(u)^2$. \label{d}


\item $\varphi(u)^3+\psi(u)^3=1$, $\Sl(u)^3+\Cl(u)^3=1, \vp{0}=0, \psi(0)=1, \Sl(0)=0, \Cl(0)=1$. \label{h}


\item $\displaystyle \vp{-u}=-\fr{\vp{u}}{\psi(u)},\ \Sl(-u)=-\fr{\Sl(u)}{\Cl(u)}$, $\psi(-u)=\fr{1}{\psi(u)}$, $\Cl(-u)=\fr{1}{\Cl(u)}$. \label{-}

 
\item $\Sl(-3\pe u)^{-1}+\Sl(3\pe u)^{-1}=\vp{u}^{-1}+\vp{-u}^{-1}=3\x{\pe u}^{-1}$.
\label{g}


\item $\displaystyle \zt{(1-\ro)\pe u}=(1-\rob)\zt{\pe u}+(1-\rob)(\vp{u}^{-1}-\vp{-u}^{-1})$. \label{o}

\end{enumerate}
\end{lemma}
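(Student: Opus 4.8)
The plan is to prove the list from the top down, each block feeding the next, with the addition formula \eqref{addz}/\eqref{addzt} as the engine and Lemma~\ref{ee} supplying all needed $\wp$-values. I would begin with the Weierstrass $\zeta$-identities \eqref{zt}. The scaling law $\zt{\ro u}=\rob\zt u$ is immediate from the degree $-1$ homogeneity of $\zeta$ together with $\ro\cdot\pe\Z[\ro]=\pe\Z[\ro]$ and $\ro^{-1}=\rob$, and oddness is read off the defining series. For the half-period value I would invoke the Legendre relation $\eta_1\omega_2-\eta_2\omega_1=2\pi i$ with $\omega_1=\pe,\ \omega_2=\ro\pe$; the scaling law gives $\eta_2=\rob\eta_1$, and $\ro-\rob=\sqrt{-3}$ then forces $\eta_1=2\zt{\fr{\pe}{2}}=\fr{2\pi}{\sqrt3\pe}$, whence $\zt{\fr{\pe}{2}}=\fr{\pi}{\sqrt3\pe}$ and $\zt{\fr{\ro}{2}\pe}=\rob\fr{\pi}{\sqrt3\pe}$.

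For \eqref{Z} the periodicity of $\textsf{Z}$ and the law $\sZ{\ro u}=\rob\sZ u$ fall out of the definition $\sZ u=\zt{\pe u}-\fr{2\pi}{\sqrt3\pe}\ol u$ and the quasi-periodicity $\zt{\pe u+\pe}=\zt{\pe u}+\eta_1$ (the correction term is built precisely to cancel $\eta_1$ and $\eta_2=\rob\eta_1$). The vanishing $\sZ{\fr{1-\ro}{3}}=0$ is the cleanest point: since $\ro\cdot\fr{1-\ro}{3}\equiv\fr{1-\ro}{3}\bmod\Z[\ro]$, periodicity and the scaling law give $\sZ{\fr{1-\ro}{3}}=\rob\sZ{\fr{1-\ro}{3}}$, hence $=0$. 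For the normalization $\sZ{\fr{1}{3}}=1$ I would evaluate $\zt{\fr{\pe}{3}}$ by equating two expressions for $\zt{\fr{2\pe}{3}}$: quasi-periodicity and oddness give $\zt{\fr{2\pe}{3}}=-\zt{\fr{\pe}{3}}+\eta_1$, while the duplication law obtained from \eqref{addzt} as $v\to u$, namely $\zt{2u}=2\zt u+\fr{1}{2}\fr{\wp''(u)}{\y u}$ with $\wp''=6\wp^2$, gives (using $\x{\fr{\pe}{3}}=3,\ \y{\fr{\pe}{3}}=-9$ from \eqref{val3}, \eqref{val3'}) $\zt{\fr{2\pe}{3}}=2\zt{\fr{\pe}{3}}-3$; solving yields $\zt{\fr{\pe}{3}}=1+\fr{2\pi}{3\sqrt3\pe}$ and hence $\sZ{\fr{1}{3}}=1$.

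The technical heart is \eqref{phi}. I would substitute \eqref{addz} into the definitions of $\varphi$ and $\psi$, expanding each term $\sZ{u-\fr{w}{3}}$ ($w\in W'$) with the values $\x{\fr{w}{3}\pe},\ \y{\fr{w}{3}\pe}$ from \eqref{val3}, \eqref{val3'} and the constants $\sZ{-\fr{w}{3}}$ from \eqref{Z}. The three $\sZ u$-terms and the three constant terms vanish because $1+\ro+\rob=0$, and the surviving fractions collapse through the partial-fraction identities $\sum_{w\in W'}\fr{\ol w}{\x{\pe u}-3w}=\fr{9\x{\pe u}}{\wp^3-27}$ and $\sum_{w\in W'}\fr{w}{\x{\pe u}-3w}=\fr{27}{\wp^3-27}$ over the cube roots $3w$ of $27$; combining with $\wp^3-27=\fr{(\wp'-9)(\wp'+9)}{4}$ from \eqref{weq} gives $\vp u=\fr{6\x{\pe u}}{9+\y{\pe u}}$ and $\ps u=\fr{-9+\y{\pe u}}{9+\y{\pe u}}$, and ellipticity with the stated period lattices is then immediate. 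From here \eqref{psi}--\eqref{o} are formal consequences: \eqref{psi} inverts \eqref{phi}; \eqref{r} uses \eqref{rwp}, \eqref{cj}; \eqref{d} is differentiation using $\wp''=6\wp^2$ and \eqref{weq}; \eqref{h} uses the same cubic relation plus the local expansion at $u=0$ for the special values; \eqref{-} uses that $\wp$ is even and $\wp'$ odd; \eqref{g} is a one-line sum of reciprocals; and \eqref{o} follows by applying \eqref{addzt} to $(1-\ro)\pe u=\pe u-\pe(\ro u)$ with $\zt{\ro w}=\rob\zt w$, $\x{\ro w}=\ro\x w$, $\y{\ro w}=\y w$ and $\fr{1}{1-\ro}=\fr{1-\rob}{3}$, then invoking \eqref{g}. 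In each case the substitution $u\mapsto\fr{u}{-3\pe}$ converts the $\varphi,\psi$-statements into the $\Sl,\Cl$-statements, the factor $-3\pe$ in \eqref{d} cancelling against $\varphi'=-3\pe\ps{}^2$.

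I expect the difficulty to lie in bookkeeping rather than ideas, at two points. First, pinning down $\sZ{\fr{1}{3}}=1$ requires the quasi-period $\eta_1$ and the duplication constant to be exactly right, since any slip there contaminates every later normalization. Second, the partial-fraction collapse in \eqref{phi} must pair the weights $1,\rob,\ro$ (for $\varphi$) and $1,\ro,\rob$ (for $\psi$) correctly with the roots $3,3\ro,3\rob$, so that the $X^2$- and constant-order coefficients cancel and only the clean numerators $9\x{\pe u}$ and $27$ survive; this is where sign and conjugation errors are easiest to make.
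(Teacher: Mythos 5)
Your proposal is correct and follows essentially the same route as the paper: Legendre's relation for $\zt{\fr{\pe}{2}}$, the $\zeta$-addition formula \eqref{addz} combined with the third-division values of Lemma~\ref{ee} and the partial-fraction identities over $W'$ for \eqref{phi}, and the remaining items as formal consequences of \eqref{phi} (for \eqref{o} one of course needs the difference $\vp{u}^{-1}-\vp{-u}^{-1}=\fr{1}{3}\y{\pe u}/\x{\pe u}$, the companion of \eqref{g}, which is what the paper computes). The only local deviation is your evaluation of $\sZ{\fr{1}{3}}$ via the duplication formula and quasi-periodicity, where the paper instead puts $u=\fr{1-\ro}{3}$, $v=\fr{\ro}{3}$ in \eqref{addz}; both yield $\zt{\fr{\pe}{3}}=1+\fr{2\pi}{3\sqrt{3}\pe}$, so this is an equivalent one-line variant.
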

\begin{proof}
\eqref{zt}
The claims $\zt{\ro u}=\rob\zt{u},\ \zt{-u}=-\zt{u}$ are proved similarly as Lemma~\ref{ee} \eqref{rwp}. Thus it suffices to show that $\zt{\fr{\pe}{2}}=\fr{\pi}{\sqrt{3}\pe}.$
This is proved by Legendre's relation (see, for example, \cite[Chap.XX, 20.411]{WW} )
$$
\fr{\ro \pe}{2}\zt{\fr{\pe}{2}}-\fr{\pe}{2}\zt{\fr{\ro \pe}{2}}=\fr{\pi i} {2}.
$$

\eqref{Z}
The calims $\sZ{\ro u}=\rob\sZ{u},\ \sZ{-u}=-\sZ{u}$ are trivial by the definition and \eqref{zt}. By the quasi-periodicity (see, for example, \cite[Chap.XX 20.41]{WW})
$$
\zt{\pe(u+1)}=\zt{\pe u}+2\zt{\fr{\pe}{2}},
$$
we have
$$
\sZ{u+1}-\sZ{u}=\zt{\pe (u+1)}-\zt{\pe u}-\fr{2\pi}{\sqrt{3}\pe}=2\zt{\fr{\pe}{2}}-\fr{2\pi}{\sqrt{3}\pe}=0.
$$
Then, we also have $\sZ{u+\ro}-\sZ{u}=\sZ{\ro (\rob u+1)}-\sZ{\ro \cdot \rob u}=\rob (\sZ{\rob u+1}-\sZ{\rob u})=0$.
Thus, it is proved that $\textsf{Z}$\ is periodic with the period lattice $ \Z[\ro]$. The claim $\sZ{\fr{1-\ro}{3}}=0$ is similarly proved as $\x{\fr{1-\ro}{3}\pe}$ in Lemma~\ref{ee} \eqref{val3}. The claim $\sZ{\fr{1}{3}}=1$ is proved by putting $u=\fr{1-\ro}{3}, v=\fr{\ro}{3}$ in \eqref{addz}, $\sZ{\fr{1-\ro}{3}}=0, \sZ{\ro u}=\rob \sZ{u}$, and Lemma~\ref{ee} \eqref{val3} and \eqref{val3'}.

\eqref{phi}
For $w \in \{1,\ro,\rob\}=W'$, by putting $v=-\fr{w}{3}$ in \eqref{addz}, 
$$
\sZ{u-\fr{w}{3}}=\sZ{u}+\sZ{-\fr{w}{3}}+\fr{1}{2}\fr{\wp'(\pe u)-\wp'(-\fr{w}{3}\pe)}{\wp(\pe u)-\wp(-\fr{w}{3}\pe)}=\sZ{u}-\ol{w}+\fr{1}{2}\fr{\wp'(\pe u)-9}{\wp(\pe u)-3w},
$$
by \eqref{Z}, and Lemma~\ref{ee} \eqref{val3} and \eqref{val3'}. Then, by the definition of $\varphi$, the facts that $\sum_{w\in W'}w=\sum_{w\in W'}\ol{w}=0$, $\sum_{w\in W'}\fr{\ol{w}}{x-w}=\fr{3x}{x^3-1}$, and Lemma~\ref{ee} \eqref{weq}, we have
\begin{align*}
3\varphi(u)&=\sum_{w\in W'}\ol{w}\sZ{u-\fr{w}{3}}=\sum_{w\in W'}\ol{w}\sZ{u}-\sum_{w\in W'}w+\fr{\wp'(\pe u)-9}{2}\sum_{w\in W'}\fr{\ol{w}}{\wp(\pe u)-3w}\\
&=\fr{\wp'(\pe u)-9}{2}\sum_{w\in W'}\fr{\ol{w}}{\wp(\pe u)-3w}=\fr{\wp'(\pe u)-9}{2}\fr{9\x{\pe u}}{\x{\pe u}^3-27}=\fr{18\x{\pe u}}{\y{\pe u}+9}.
\end{align*}
Thus $\varphi(u)=\fr{6\x{\pe u}}{\y{\pe u}+9}$. Similarly, by $\sum_{w\in W'}\fr{w}{x-w}=\fr{3}{x^3-1}$, we have
\begin{align*}
-3\psi(u)&=\sum_{w\in W'}w\sZ{u-\fr{w}{3}}=\sum_{w\in W'}w\sZ{u}-\sum_{w\in W'}1+\fr{\wp'(\pe u)-9}{2}\sum_{w\in W'}\fr{w}{\wp(\pe u)-3w}\\
&=-3+\fr{\wp'(\pe u)-9}{2}\sum_{w\in W'}\fr{w}{\wp(\pe u)-3w}=-3+\fr{\wp'(\pe u)-9}{2}\fr{27}{\x{\pe u}^3-27}=-3+\fr{54}{\y{\pe u}+9}.
\end{align*}
Thus $\psi(u)=1-\fr{18}{\y{\pe u}+9}=\fr{\y{\pe u}-9}{\y{\pe u}+9}$.

\eqref{psi}
This is proved by solving two equations of \eqref{phi} in terms of $\x{\pe u}$ and $\y{\pe u}$.

\eqref{r}
It is trivial, since Lemma~\ref{ee} \eqref{rwp}, \eqref{cj}, and \eqref{phi}.

\eqref{d}
This is proved by differentiating two equations of  \eqref{phi} and the facts that $\wp'^2=4\wp^3-27$ and $\wp''=6\wp^2$.

\eqref{h}
By \eqref{d}, we have
$$
(\varphi^3+\psi^3)'=3(\varphi^2\varphi'+\psi^2\psi')=9\pe(-\varphi^2\psi^2+\psi^2\varphi^2)=0.
$$
Thus, $\varphi^3+\psi^3$ is constant. On the other hand, $\varphi(0)=0, \psi(0)=1$, since \eqref{phi}, and the fact $\x{u},\y{u}$ have poles of order $2,3$ at $u=0$ respectively. Therfore, we have $\varphi^3+\psi^3=1$. By definitions of $\Sl$ and $\Cl$, we also have  $\Sl^3+\Cl^3=1$ and $\Sl(0)=0, \Cl(0)=1$.

\eqref{-}
By \eqref{phi} and Lemma~\ref{ee} \eqref{rwp}, it follows that 
$$
\vp{-u}=\fr{6\x{-\pe u}}{9+\y{-\pe u}}=\fr{6\x{\pe u}}{9-\y{\pe u}}=\fr{9+\y{\pe u}}{9-\y{\pe u}}\fr{6\x{\pe u}}{9+\y{\pe u}}=-\fr{\vp{u}}{\psi(u)}.
$$
By definitions of $\Sl$ and $\Cl$, we also have $\Sl(-u)=-\fr{\Sl(u)}{\Cl(u)}$. By \eqref{phi}, it follows that $\psi(-u)\psi(u)=\fr{-9+\y{\pe u}}{9+\y{\pe u}}\cdot \fr{-9-\y{\pe u}}{9-\y{\pe u}}=1$. By definition of $\Cl$, we also have $\Cl(-u)=\fr{1}{\Cl(u)}$.

\eqref{g}
This is obvious from \eqref{phi}.

\eqref{o}
By substituting $u:=\pe u, v:=-\ro \pe u$ in \eqref{addzt}, \eqref{zt}, and Lemma~\ref{ee} \eqref{rwp}, we have
\begin{align*}
\zt{(1-\ro)\pe u}&=
(1-\rob)\zt{\pe u}+\fr{1}{1-\ro}\fr{\y{\pe u}}{\x{\pe u}}.
\end{align*}
On the other hand, by \eqref{phi} and Lemma~\ref{ee} \eqref{rwp}, we also have
\begin{align*}
\vp{u}^{-1}-\vp{-u}^{-1}&=
\fr{1}{3}\fr{\y{\pe u}}{\x{\pe u}}.
\end{align*}
Thus, \eqref{o} follows from these two equalities.

\end{proof}

\subsection{Power series expansions in terms of $\Sl$}

Let us consider power series expansions in terms of the function $\Sl(u)$ in this section, which will be used to show the main congruences (Theorem~\ref{main1}).

We note that $\Q[[\Sl(u)]]$ is isomorphic to a formal power series ring over $\Q$. Indeed, since $\Sl(u)$ is an elliptic function which is holomorphic at $u=0$, $\Sl(u)$ has infinitely many values around $u=0$. Thus, $\Sl(u)\in \Q[[u]]$ is a transcendental element. First, we will show the following:

\begin{lemma}[Addition formula of $\Sl(u)$] (cf. \cite[\S 1. Appendix. 1. Addition Formula (i)]{Asai}) \label{addsl'}

We have
$$
\Sl(u+v)=\displaystyle \fr{\Sl(u)^2\Cl(v)-\Sl(v)^2\Cl(u)}{\Sl(u)\Cl(v)^2-\Sl(v)\Cl(u)^2}\in \Sl(u)+\Sl(v)+(\Sl(u), \Sl(v))^2\Z\left[\fr{1}{3}\right][[\Sl(u),\Sl(v)]]\ (\subset \Q[[u,v]]).
$$
\end{lemma}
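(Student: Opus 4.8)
The plan is to establish the closed-form addition formula first, and then read off the integrality statement by examining the power series expansions at the origin. For the closed form, recall from Lemma~\ref{pro}~\eqref{d} that $\Sl' = \Cl^2$ and $\Cl' = -\Sl^2$, and from Lemma~\ref{pro}~\eqref{h} that $\Sl^3 + \Cl^3 = 1$. These are exactly the defining relations of a pair of functions parametrizing the Fermat-type cubic, and the claimed formula is the standard addition law for such a system. The cleanest route is to reduce to the already-known $\wp$-addition formula: using the expressions $\Sl(u) = \varphi(u/(-3\pe))$ and the dictionary of Lemma~\ref{pro}~\eqref{phi}, \eqref{psi} relating $\varphi,\psi$ to $\wp(\pe\,\cdot\,),\wp'(\pe\,\cdot\,)$, one can translate the classical addition formula~\eqref{addz}--\eqref{addzt} for $\zt{}$ (equivalently for $\wp$) into the stated rational expression in $\Sl(u),\Cl(u),\Sl(v),\Cl(v)$. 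First I would verify the identity as an identity of elliptic functions: both sides are elliptic in $u$ (with $v$ a parameter) with the period lattice $3\pe\Z[\ro]$, so it suffices to check that they have the same divisor and agree at one point, or alternatively to confirm the two sides agree up to sufficiently high order in a local expansion, which forces equality since an elliptic function is determined by finitely many Laurent coefficients.

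For the integrality and valuation claim, I would work with the power series in the two variables $X := \Sl(u)$ and $Y := \Sl(v)$. Since $\Sl(u) = u + O(u^4)$ (the expansion $\Sl(u) = \sum_n C_{3n+1}u^{3n+1}$ begins in degree one with $C_1 = 1$), and $\Cl(u) = 1 + O(u^3)$ by Lemma~\ref{pro}~\eqref{h} with $\Cl(0)=1$, each of $\Cl(u),\Cl(v)$ is a power series in $X,Y$ with constant term $1$. Writing the formula as
\[
\Sl(u+v) = \frac{X^2\Cl(v) - Y^2\Cl(u)}{X\Cl(v)^2 - Y\Cl(u)^2},
\]
the denominator has the form $X\Cl(v)^2 - Y\Cl(u)^2 = (X - Y) + (\text{higher order})$, and after factoring out $(X-Y)$ — or, more robustly, after expanding the geometric series for the inverse of the denominator — one sees that $\Sl(u+v)$ is a power series in $X,Y$. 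The leading terms are $X + Y$, matching the additivity of the formal group to first order, so the remainder lies in $(X,Y)^2$. This gives the membership in $\Sl(u)+\Sl(v)+(\Sl(u),\Sl(v))^2\,R[[\Sl(u),\Sl(v)]]$; it remains to pin down the coefficient ring $R = \Z[\tfrac13]$.

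The coefficient-ring statement is the step I expect to be the main obstacle, since a priori the inversion of the denominator could introduce arbitrary primes. The key is to control which primes can appear. The numerator and denominator visibly have coefficients in $\Z$, once $\Cl(u)$ and $\Cl(v)$ are known to lie in $\Z[\tfrac13][[X]]$ and $\Z[\tfrac13][[Y]]$ respectively; this latter fact should follow from the recurrences for $C_{3n+1}$ and the relation $\Sl^3+\Cl^3=1$, whose denominators are powers of $3$ (compare the remark that the denominators of these Bernoulli–Hurwitz-type numbers are powers of $3$, as in Lemma~\ref{denc}). To invert the denominator $D = X\Cl(v)^2 - Y\Cl(u)^2$ while keeping coefficients in $\Z[\tfrac13]$, I would write $D = (X-Y)U$ where $U = 1 + (\text{higher order terms in } \Z[\tfrac13][[X,Y]])$ is a unit in the power series ring, so $U^{-1} \in \Z[\tfrac13][[X,Y]]$ by the geometric series, and observe that the numerator $X^2\Cl(v) - Y^2\Cl(u)$ is divisible by $(X-Y)$ as a formal power series — indeed both numerator and denominator vanish along the antidiagonal $X=Y$ (corresponding to $\Sl(u+v)$ being finite when $v=-u$ after using $\Sl(-u)=-\Sl(u)/\Cl(u)$ from Lemma~\ref{pro}~\eqref{-}). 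Carrying out this factorization explicitly and checking that the quotient $(X^2\Cl(v)-Y^2\Cl(u))/(X-Y)$ again has coefficients in $\Z[\tfrac13]$ completes the argument, and confirms that $3$ is the only prime that can occur in a denominator.
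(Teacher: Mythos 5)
Your proof is correct in outline, and its second half (the integrality statement) is essentially identical to the paper's argument: the paper also sets $x=\Sl(u)$, $y=\Sl(v)$, expands $\Cl(u)=\sqrt[3]{1-\Sl(u)^3}$ and $\Cl(u)^2=\sqrt[3]{(1-\Sl(u)^3)^2}$ as binomial series with coefficients in $\Z[\fr{1}{3}]$, divides numerator and denominator by $x-y$ using $a^m-b^m=(a-b)(a^{m-1}+\cdots+b^{m-1})$, and inverts the resulting unit $1+(xy)\Z[\fr{1}{3}][[x,y]]$ by a geometric series. Where you genuinely diverge is the proof of the closed-form identity itself: the paper does not compare divisors or translate the $\wp$-addition formula; it substitutes $\alpha=(u+v)/2$, $\beta=(u-v)/2$, uses $\Sl'=\Cl^2$, $\Cl'=-\Sl^2$, $\Sl^3+\Cl^3=1$ to check that $\partial f/\partial\beta=0$ for the right-hand side $f$, concludes $f$ depends only on $u+v$, and evaluates at $v=0$. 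That trick is shorter and avoids the bookkeeping your route requires (an a priori bound on pole orders, or a verification that both sides have the same divisor and a common value), though your route is more robust and makes the link to the $\wp$-addition formula explicit. One small correction: the locus $X=Y$, i.e.\ $\Sl(u)=\Sl(v)$, is the diagonal $u\equiv v$, not the antidiagonal $v=-u$, so your parenthetical gloss is wrong; but the formal fact you actually use --- that $X^2\Cl(v)-Y^2\Cl(u)$ and $X\Cl(v)^2-Y\Cl(u)^2$ both vanish upon setting $Y=X$ and hence are divisible by $X-Y$ in $\Z[\fr{1}{3}][[X,Y]]$ --- is correct and suffices.
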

\begin{proof}
After putting $\alpha:=(u+v)/2,\ \beta:=(u-v)/2$, we can see that the partial derivative as $\beta$ of $f(\alpha,\beta):=\fr{\Sl(u)^2\Cl(v)-\Sl(v)^2\Cl(u)}{\Sl(u)\Cl(v)^2-\Sl(v)\Cl(u)^2}$ is zero. Hence, $f$ is a constant function of $\beta$. Therefore $f(\alpha,\beta)=f(\alpha,\alpha)=\Sl(2\alpha)=\Sl(u+v)$, by the last two equalities of Lemma~\ref{pro} \eqref{h}. The equality of Lemma is proved.

It remains to show that 
$
\Sl(u+v)\in \Sl(u)+\Sl(v)+(\Sl(u), \Sl(v))^2\Z\left[\fr{1}{3}\right][[\Sl(u),\Sl(v)]].
$
Since Lemma~\ref{pro} \eqref{h} and \eqref{r},
$$
\Cl(u)=\sqrt[3]{1-\Sl(u)^3}=\sum_{n=0}^{\infty}(-1)^n\dbinom{\fr{1}{3}}{n}\Sl(u)^{3n}=1+\sum_{n=1}^{\infty}(-1)^n\dbinom{\fr{1}{3}}{n}\Sl(u)^{3n},
$$
$$
\Cl(u)^2=\sqrt[3]{(1-\Sl(u)^3)^2}=\sum_{n=0}^{\infty}(-1)^n\dbinom{\fr{2}{3}}{n}\Sl(u)^{3n}=1+\sum_{n=1}^{\infty}(-1)^n\dbinom{\fr{2}{3}}{n}\Sl(u)^{3n}
$$
hold for $u\in \R$ around $u=0$. By the analytic continuations, 
\begin{equation} \label{cl}
\Cl(u)=1+\sum_{n=1}^{\infty}(-1)^n\dbinom{\fr{1}{3}}{n}\Sl(u)^{3n},\ \ 
\Cl(u)^2=1+\sum_{n=1}^{\infty}(-1)^n\dbinom{\fr{2}{3}}{n}\Sl(u)^{3n}
\end{equation}
hold for $u\in \C$ around $u=0$. Write $x=\Sl(u),y=\Sl(v)$. Then, since \eqref{cl},
\begin{equation}
\begin{split} \label{adds}
\Sl(u+v)
&=\fr{x^2-y^2+\sum_{n=1}^{\infty}(-1)^n\dbinom{\fr{1}{3}}{n}(x^2y^{3n}-y^2x^{3n})}{x-y+\sum_{n=1}^{\infty}(-1)^n\dbinom{\fr{2}{3}}{n}(xy^{3n}-yx^{3n})}\\
&=\fr{x+y-x^2y^2\sum_{n=1}^{\infty}(-1)^n\dbinom{\fr{1}{3}}{n}(x^{3n-3}+\cdots+y^{3n-3})}{1-xy\sum_{n=1}^{\infty}(-1)^n\dbinom{\fr{2}{3}}{n}(x^{3n-2}+\cdots+y^{3n-2})}\in x+y+(x, y)^2\Z\left[\fr{1}{3}\right][[x,y]].
\end{split}
\end{equation}
Here, we note that $\left(1-xy\sum_{n=1}^{\infty}(-1)^n\dbinom{\fr{2}{3}}{n}(x^{3n-2}+\cdots+y^{3n-2})\right)^{-1}\in 1+(xy)\Z\left[\fr{1}{3}\right][[x,y]]\subset \left(\Z\left[\fr{1}{3}\right][[x,y]]\right)^{\times}$, and 
$\dbinom{\fr{1}{3}}{n}, \dbinom{\fr{2}{3}}{n}\in \Z\left[\fr{1}{3}\right]$ (See p.52 of \cite{Wa}).

\end{proof}

\begin{lemma} (Power series expansion of $\Sl(ru)$ and $\Sl(ru)^{-1}$ in terms of $\Sl(u)$) \label{addsl}For any integer $r$, we have
$$
\Sl(ru)\in r\Sl(u)+\Sl(u)^4\Z\left[\fr{1}{3}\right][[\Sl(u)^3]]\ (\subset \Q[[u]]).
$$ 
We also have, if $r\neq 0$,
$$\Sl(ru)^{-1}\in \fr{1}{r\Sl(u)}+\Sl(u)^2\Z\left[\fr{1}{r},\fr{1}{3}\right][[\Sl(u)^3]]\ \left(\subset \fr{1}{u}\Q[[u]]\right).$$
\end{lemma}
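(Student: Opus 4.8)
The plan is to set $x := \Sl(u)$ and to work inside $\Q[[x]]$, which we may identify with $\Q[[u]]$ because $\Sl(u) = u + O(u^4)$ (indeed $C_1 = \Sl'(0) = \Cl(0)^2 = 1$ by Lemma~\ref{pro} \eqref{d} and \eqref{h}), so $x$ is a formal uniformizer. Consequently every $\Sl(ru) \in u\Q[[u]]$ re-expands as a series $\sum_k c_k x^k$ with $c_k \in \Q$, and the whole task reduces to pinning down three features of these coefficients: (i) $c_k \in \Z[\tfrac13]$; (ii) $c_k = 0$ unless $k \equiv 1 \bmod 3$; (iii) $c_1 = r$. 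Granting all three, the surviving terms with $k \geq 4$ assemble into $x^4\Z[\tfrac13][[x^3]]$, which yields exactly $\Sl(ru) \in rx + x^4\Z[\tfrac13][[x^3]]$.

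I would prove (i) for $r \geq 0$ by induction on $r$, the cases $r = 0,1$ being immediate ($\Sl(0) = 0$, $\Sl(u) = x$). For the step from $r$ to $r+1$, write $\Sl((r+1)u) = \Sl(ru + u)$ and invoke the addition formula of Lemma~\ref{addsl'}, which presents this as an element of $a + b + (a,b)^2\Z[\tfrac13][[a,b]]$ with $a = \Sl(ru)$ and $b = \Sl(u) = x$. Since $\Sl(0) = 0$, the inductive hypothesis gives $a \in x\Z[\tfrac13][[x]]$, a power series in $x$ with $\Z[\tfrac13]$-coefficients and no constant term; substituting $a = a(x)$ into the $\Z[\tfrac13]$-coefficient series of the addition formula then lands back in $\Z[\tfrac13][[x]]$, which is (i). Property (ii) is independent of the induction: feeding $\Sl(\ro u) = \ro\Sl(u)$ (Lemma~\ref{pro} \eqref{r}) into both the outer function and into $x = \Sl(u)$ gives $\sum_k c_k \ro^k x^k = \Sl(r\ro u) = \ro\Sl(ru) = \ro\sum_k c_k x^k$, forcing $c_k(\ro^k - \ro) = 0$ and hence $c_k = 0$ whenever $k \not\equiv 1 \bmod 3$. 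Property (iii) follows by comparing coefficients of $u^1$: from $\Sl(ru) = ru + O(u^4)$ and $x = u + O(u^4)$ one reads off $c_1 = r$.

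For negative $r$, I would reduce to the nonnegative case via the oddness relation $\Sl(-u) = -\Sl(u)/\Cl(u)$ of Lemma~\ref{pro} \eqref{-}. Writing $r = -s$ with $s > 0$, the already-proved statement gives $\Sl(su) \in sx + x^4\Z[\tfrac13][[x^3]] \subset x\Z[\tfrac13][[x^3]]$, so $\Sl(su)^3 \in x^3\Z[\tfrac13][[x^3]]$; by \eqref{cl} the factor $\Cl(su) = (1 - \Sl(su)^3)^{1/3}$ then lies in $1 + x^3\Z[\tfrac13][[x^3]]$, an invertible element whose inverse stays in $1 + x^3\Z[\tfrac13][[x^3]]$ (the binomial coefficients being in $\Z[\tfrac13]$). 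Therefore $\Sl(-su) = -\Sl(su)\,\Cl(su)^{-1} \in -sx + x^4\Z[\tfrac13][[x^3]]$, which is the desired form with $r = -s$.

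The inverse expansion is then a purely formal consequence: factor $\Sl(ru) = rx\bigl(1 + x^3 t\bigr)$ with $t \in \tfrac1r\Z[\tfrac13][[x^3]] \subseteq \Z[\tfrac1r,\tfrac13][[x^3]]$, note $(1 + x^3 t)^{-1} \in 1 + x^3\Z[\tfrac1r,\tfrac13][[x^3]]$, and multiply by $\tfrac{1}{rx}$ to get $\Sl(ru)^{-1} = \tfrac{1}{rx}(1 + x^3 t)^{-1} \in \tfrac{1}{rx} + x^2\Z[\tfrac1r,\tfrac13][[x^3]]$, as claimed. The one genuinely delicate point is step (i): one must be certain that iterating the addition formula never manufactures denominators beyond powers of $3$, and this is precisely what the $\Z[\tfrac13]$-integrality in Lemma~\ref{addsl'} secures, provided the substitution of one $\Z[\tfrac13]$-power series into another is performed at the level of formal power series, which is legitimate here since $\Sl(ru)$ has zero constant term.
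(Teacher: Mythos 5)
Your proposal is correct and follows essentially the same route as the paper: induction on $r>0$ via the addition formula of Lemma~\ref{addsl'}, the relation $\Sl(-u)=-\Sl(u)/\Cl(u)$ together with the expansion \eqref{cl} for negative $r$, and formal inversion of $r\Sl(u)\bigl(1+\fr{1}{r}\Sl(u)^3f_{r}(\Sl(u)^3)\bigr)$ for the reciprocal. The only (harmless) differences are organizational: you isolate the $\Z[\fr{1}{3}]$-integrality, the congruence $k\equiv 1\bmod 3$ on exponents, and the linear coefficient as three separate claims, and you apply the oddness relation at $su$ directly rather than substituting $-u$ into the positive-$r$ formula as the paper does.
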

\begin{proof}
There exists $f(x,y) \in (x,y)^2\Z\left[\fr{1}{3}\right][[x,y]]$ such that 
$$
\Sl(u+v)=\Sl(u)+\Sl(v)+f(\Sl(u),\Sl(v))
$$
from Lemma~\ref{addsl'}.
We will show that, for any $r>0$, $\Sl(ru) \in r\Sl(u)+\Sl(u)^4\Z\left[\fr{1}{3}\right][[\Sl(u)^3]]$ by induction on $r$. We assume that there exists $r>0$ such that 
$
\Sl(ru)\in r\Sl(u)+\Sl(u)^4\Z\left[\fr{1}{3}\right][[\Sl(u)^3]].
$
Then, there exsits $f_{r}(t)\in \Z\left[\fr{1}{3}\right][[t]]$ such that $\Sl(ru)=r\Sl(u)+\Sl(u)^4f_{r}(\Sl(u)^3)$, where $t$ is a variable. Then putting  $v:=ru$, we have 
\begin{align*}
\Sl((r+1)u)&=\Sl(u)+\Sl(ru)+f(\Sl(u),\Sl(ru))\\
&=\Sl(u)+(r\Sl(u)+\Sl(u)^4f_{r}(\Sl(u)^3))+f(\Sl(u),r\Sl(u)+\Sl(u)^4f_{r}(\Sl(u)^3))\\
&\in (r+1)\Sl(u)+\Sl(u)^4\Z\left[\fr{1}{3}\right][[\Sl(u)^3]],
\end{align*}
since 
$f$ has no terms of degree 0 or 1, and the power series expansion of $\Sl((r+1)u)$ as $\Sl(u)$ has only $\Sl(u)^{3n+1}$ terms since $\Sl(\ro u)=\ro \Sl(u)$. Therefore, $\Sl(ru)\in r\Sl(u)+\Sl(u)^4\Z\left[\fr{1}{3}\right][[\Sl(u)^3]]$ is proved for any $r>0$.\\
Next, we will show that $\Sl(-ru) \in -r\Sl(u)+\Sl(u)^4\Z\left[\fr{1}{3}\right][[\Sl(u)^3]]$ for any $r>0$.
Now, since $\Cl(u)^{-1}=\left(1+\sum_{n=1}^{\infty}(-1)^n\dbinom{\fr{1}{3}}{n}\Sl(u)^{3n}\right)^{-1}\in 1+\Sl(u)^3\Z\left[\fr{1}{3}\right][[\Sl(u)^3]]\subset \left(\Z\left[\fr{1}{3}\right][[\Sl(u)^3]]\right)^{\times}$, and Lemma~\ref{pro} \eqref{-},
$$
\Sl(-u)=-\fr{\Sl(u)}{\Cl(u)}=-\fr{\Sl(u)}{1+\sum_{n=1}^{\infty}(-1)^n\dbinom{\fr{1}{3}}{n}\Sl(u)^{3n}}=-\Sl(u)+\cdots \in -\Sl(u)+\Sl(u)^4\Z\left[\fr{1}{3}\right][[\Sl(u)^3]]
$$
follows. Therefore, for any $r>0$, $\Sl(-ru)=r\Sl(-u)+\Sl(-u)^4f_{r}(\Sl(-u)^3)\in -r\Sl(u)+\Sl(u)^4\Z\left[\fr{1}{3}\right][[\Sl(u)^3]]$ follows similarly to the former case. Thus, we have $\Sl(ru)\in r\Sl(u)+\Sl^4(u)\Z\left[\fr{1}{3}\right][[\Sl^3(u)]]$ for any $r\in \Z$.

If $r\neq 0$, we write $\Sl(ru)=r\Sl(u)+\Sl(u)^4f_{r}(\Sl(u)^3)$, where $f_{r}(t)\in \Z\left[\fr{1}{3}\right][[t]]$. Since $\left(1+\fr{\Sl(u)^3f_{r}(\Sl(u)^3)}{r}\right)^{-1}=1+\sum_{n=1}^{\infty}\left(-\fr{\Sl(u)^3f_{r}(\Sl(u)^3)}{r}\right)^n \in 1+\Sl(u)^3\Z\left[\fr{1}{r},\fr{1}{3}\right][[\Sl(u)^3]]$,
\begin{align*}
\Sl(ru)^{-1}&=
\fr{1}{r\Sl(u)}\left(1+\fr{\Sl(u)^3f_{r}(\Sl(u)^3)}{r}\right)^{-1} \in \fr{1}{r\Sl(u)}+\Sl(u)^2\Z\left[\fr{1}{r},\fr{1}{3}\right][[\Sl(u)^3]].
\end{align*}
\end{proof}

\begin{lemma} (An explicit power series expansion of the inverse of $\Sl$)\label{ArcSl}
The function $\Sl:[0,\pe] \to [0,1]$ is the inverse function of
$$
\begin{array}{rcccc}
\ASl: &[0, 1]                     &\longrightarrow& [0, \pe]  & \\
        & \rotatebox{90}{$\in$}&               & \rotatebox{90}{$\in$} & \\
        & t                    & \longmapsto   & \displaystyle \int_{0}^{t}\fr{dt}{\sqrt[3]{(1-t^3)^2}}&\left(=\displaystyle \sum_{n=0}^{\infty}(-1)^n\dbinom{-\fr{2}{3}}{n}\fr{t^{3n+1}}{3n+1}\right),
\end{array}
$$
where we choose the branch of $\sqrt[3]{(-)}$ to be $\in \R$. 
\end{lemma}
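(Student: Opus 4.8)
The plan is to identify $\ASl$ as the inverse of $\Sl$ by extracting a first-order autonomous differential equation for $\Sl$ on the real segment $[0,\pe]$ and integrating it. First I would record the two endpoint values. By Lemma~\ref{pro}~\eqref{h} we have $\Sl(0)=0$, and by definition $\Sl(\pe)=\vp{-\fr{1}{3}}$; applying the formula $\vp{u}=\fr{6\x{\pe u}}{9+\y{\pe u}}$ of Lemma~\ref{pro}~\eqref{phi} together with the special values $\x{\fr{\pe}{3}}=3$, $\y{\fr{\pe}{3}}=-9$ from Lemma~\ref{ee}~\eqref{val3},~\eqref{val3'} and the parities $\x{-u}=\x{u}$, $\y{-u}=-\y{u}$ from Lemma~\ref{ee}~\eqref{rwp}, I get $\x{-\fr{\pe}{3}}=3$ and $\y{-\fr{\pe}{3}}=9$, hence $\Sl(\pe)=\fr{6\cdot 3}{9+9}=1$. (As a sanity check, $\ASl(1)=\pe$ is precisely the integral defining $\pe$.)

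Next I would show that $\Sl$ restricts to a continuous, strictly increasing bijection $[0,\pe]\to[0,1]$. Since $\Cl(u)\in\R$ for real $u$ by Lemma~\ref{pro}~\eqref{r}, Lemma~\ref{pro}~\eqref{d} gives $\Sl'(u)=\Cl(u)^2\ge 0$ on $[0,\pe]$, so $\Sl$ is non-decreasing; together with $\Sl(0)=0$, $\Sl(\pe)=1$ and continuity this shows $\Sl$ maps $[0,\pe]$ onto $[0,1]$ with $\Sl(u)\in[0,1]$ throughout. Because $\Cl$ is analytic with $\Cl(0)=1\neq 0$, its zeros are isolated, so $\Sl$ is non-constant on every subinterval and is therefore strictly increasing; in particular $\Cl(u)=0$ forces $\Sl(u)=1$, i.e.\ $u=\pe$, so $\Sl'>0$ on $[0,\pe)$.

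Having fixed the range, $\Cl(u)^3=1-\Sl(u)^3\ge 0$ by Lemma~\ref{pro}~\eqref{h} forces $\Cl(u)=\sqrt[3]{1-\Sl(u)^3}\ge 0$ for the real branch, so $\Sl'(u)=\Cl(u)^2=\sqrt[3]{(1-\Sl(u)^3)^2}$. Writing $g:=\Sl^{-1}\colon[0,1]\to[0,\pe]$, the inverse-function rule yields $g'(t)=1/\Sl'(g(t))=1/\sqrt[3]{(1-t^3)^2}$ with $g(0)=0$, whence $g(t)=\int_0^t \fr{ds}{\sqrt[3]{(1-s^3)^2}}=\ASl(t)$, which is the asserted identification. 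The explicit series then comes from the binomial expansion $(1-t^3)^{-2/3}=\sum_{n\ge 0}(-1)^n\dbinom{-\fr{2}{3}}{n}t^{3n}$ integrated term by term. I expect the only genuine point of care---hence the mild ``obstacle''---to be the endpoint $t=1$, where $\Cl(\pe)=0$ makes $\Sl'$ vanish and the integrand blow up: there I would note that the improper integral converges since the integrand is $O\bigl((1-t)^{-2/3}\bigr)$ near $t=1$, and that the inverse relation, established on $(0,1)$, extends to the closed interval by continuity of $\Sl$ and $\ASl$. Maintaining a consistent real cube-root branch throughout is the other bookkeeping detail.
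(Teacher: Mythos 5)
Your proof is correct and follows essentially the same route as the paper: both establish the endpoint values $\Sl(0)=0$, $\Sl(\pe)=1$ from the special values of $\wp,\wp'$, use $\Sl'=\Cl^2=\sqrt[3]{(1-\Sl^3)^2}$ (real branch) together with monotonicity to get the bijection $[0,\pe]\to[0,1]$, and then identify the inverse with $\ASl$ — the paper by the substitution $t=\Sl(u)$ inside the integral, you by the equivalent inverse-function/ODE formulation $g'(t)=1/\Sl'(g(t))$. Your additional care about strict monotonicity (isolated zeros of $\Cl$) and about convergence at the endpoint $t=1$ only tightens points the paper's proof passes over quickly.
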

\begin{proof}
Since $\fr{d}{dt}\ASl=\fr{1}{\sqrt[3]{(1-t^3)^2}}$, $\ASl$ is monotone increasing. By the definitions of $\ASl$ and $\pe$, $\ASl(0)=0, \ASl(1)=\pe$. Thus, the map $\ASl:[0, 1]\to [0,\pe]$ is bijective. On the other hand, the facts that 
$$
\Sl(0)=0,\ \ \Sl(\pe)=\vp{-\fr{1}{3}}=\fr{6\x{-\fr{\pe}{3}}}{9+\y{-\fr{\pe}{3}}}=1
$$ 
follow by Lemma~\ref{pro} \eqref{phi} and Lemma~\ref{ee} \eqref{val3} \eqref{val3'}. 
By Lemma~\ref{pro} \eqref{d}, $\Sl$ is monotone increasing over $[0,\pe]$. Thus, the map $\Sl:[0,\pe] \to [0,1]$ is a bijection.
Therefore, for $u_{0} \in [0, \pe]$, we write $t_{0}:=\Sl(u_{0})$, then
$$
\ASl(\Sl(u_{0}))=\displaystyle \int_{0}^{t_{0}}\fr{dt}{\sqrt[3]{(1-t^3)^2}}=\int_{0}^{u_{0}}\fr{\Cl(u)^2du}{\sqrt[3]{(1-\Sl(u)^3)^2}}=u_{0},
$$
where $t=\Sl(u)$ and $dt=\Sl'(u)du=\Cl^2(u)du$, since Lemma~\ref{pro} \eqref{d} and \eqref{h}. Note that $\Sl(u), \Cl(u)\in \R$ for $u \in \R$ (Lemma~\ref{pro} \eqref{r}). This means that the map $\Sl:[0,\pe] \to [0,1]$ is the inverse function of the map $\ASl:[0, 1]\to [0,\pe]$.
\end{proof}

From \cite[Theorem 1.19]{Kodaira} and $\Sl'(0)=\Cl^2(0)=1\neq 0$, we have $\Sl(u)$ is bijective around $u=0$ for $u\in \C$. Hence, by Lemma~\ref{ArcSl}, the inverse function of $\Sl(u)$ is  the analytic continuation of the function $\ASl(t)$ to a sufficiently small nighbourhood of $t=0$, which is $\ASl(t)=\sum_{n=0}^{\infty}(-1)^n\dbinom{-\fr{2}{3}}{n}\fr{t^{3n+1}}{3n+1}$. Thus, we have:
\begin{corollary} \label{ASL}
$u=\sum_{n=0}^{\infty}(-1)^n\dbinom{-\fr{2}{3}}{n}\fr{\Sl(u)^{3n+1}}{3n+1}.$
\end{corollary}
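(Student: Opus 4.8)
The plan is to observe that this corollary is an immediate consequence of the identification, already made in the paragraph following Lemma~\ref{ArcSl}, of the power series $\sum_{n=0}^{\infty}(-1)^n\dbinom{-\fr{2}{3}}{n}\fr{t^{3n+1}}{3n+1}$ with the local holomorphic inverse of $\Sl$ at $u=0$. Once one knows that $\ASl\big(\Sl(u)\big)=u$ for $u$ in a complex neighbourhood of $0$, the corollary follows by merely substituting $t=\Sl(u)$ into the explicit expansion of $\ASl$. So the work is entirely in justifying the composition, not in any new computation.

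Concretely I would proceed in a few short steps. First, record that $\Sl$ is holomorphic at $u=0$ with $\Sl(0)=0$ and $\Sl'(0)=\Cl(0)^2=1\neq 0$ by Lemma~\ref{pro}\eqref{d},\eqref{h}, so by the holomorphic inverse function theorem (as cited from \cite{Kodaira}) $\Sl$ is biholomorphic on a neighbourhood of $0$. Second, note that the series $\sum_{n=0}^{\infty}(-1)^n\dbinom{-\fr{2}{3}}{n}\fr{t^{3n+1}}{3n+1}$ converges near $t=0$, defines a holomorphic function there, and agrees with $\ASl$ on the real segment $[0,\epsilon)$ by Lemma~\ref{ArcSl}; hence it is precisely the analytic continuation of $\ASl$ to a complex neighbourhood of $0$. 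Third, since $\ASl\circ\Sl=\mathrm{id}$ on a real interval (Lemma~\ref{ArcSl}), the identity theorem propagates this to $\ASl\big(\Sl(u)\big)=u$ on a complex neighbourhood of $0$, and substituting the power series gives the claim.

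Alternatively, and perhaps more transparently, I would verify the identity directly through its defining differential equation, which sidesteps any appeal to the real inverse. Writing $F(t):=\sum_{n=0}^{\infty}(-1)^n\dbinom{-\fr{2}{3}}{n}\fr{t^{3n+1}}{3n+1}$, the binomial series yields $F'(t)=\sum_{n=0}^{\infty}(-1)^n\dbinom{-\fr{2}{3}}{n}t^{3n}=(1-t^3)^{-\fr{2}{3}}$. Near $u=0$ we have $\Cl(0)=1$, so $\Cl(u)=(1-\Sl(u)^3)^{\fr{1}{3}}$ for the branch normalized by $\Cl(0)=1$ (using $\Sl^3+\Cl^3=1$ from Lemma~\ref{pro}\eqref{h}), and therefore $\Sl'(u)=\Cl(u)^2=(1-\Sl(u)^3)^{\fr{2}{3}}$ by Lemma~\ref{pro}\eqref{d}. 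Consequently $\fr{d}{du}F\big(\Sl(u)\big)=F'\big(\Sl(u)\big)\Sl'(u)=(1-\Sl(u)^3)^{-\fr{2}{3}}(1-\Sl(u)^3)^{\fr{2}{3}}=1$, and since $F\big(\Sl(0)\big)=F(0)=0$ we conclude $F\big(\Sl(u)\big)=u$, which is exactly the corollary.

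Since this is a corollary, there is no real obstacle; the only points requiring care are branch and convergence bookkeeping. Specifically, one must confirm that the binomial expansion of $(1-t^3)^{-\fr{2}{3}}$ uses the branch matching $\ASl'$, that $\Cl(u)^2=(1-\Sl(u)^3)^{\fr{2}{3}}$ holds with the branch fixed by $\Cl(0)=1$, and that all manipulations take place in a single neighbourhood of $0$ on which $\Sl$ is a biholomorphism; everything else is formal.
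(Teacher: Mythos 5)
Your first argument is exactly the paper's proof: invoke the holomorphic inverse function theorem via $\Sl'(0)=\Cl(0)^2=1\neq 0$, identify the binomial series with $\ASl$ on a real segment using Lemma~\ref{ArcSl}, and analytically continue the identity $\ASl(\Sl(u))=u$ to a complex neighbourhood of $0$. Your alternative argument is a genuinely different and arguably cleaner route: writing $F(t)=\sum_{n\geq 0}(-1)^n\binom{-2/3}{n}\fr{t^{3n+1}}{3n+1}$, one checks $F'(t)=(1-t^3)^{-2/3}$ and $\Sl'(u)=\Cl(u)^2=(1-\Sl(u)^3)^{2/3}$ (the branch with value $1$ at $0$, which is exactly \eqref{cl}), so $\fr{d}{du}F(\Sl(u))=1$ and $F(\Sl(0))=0$ force $F(\Sl(u))=u$. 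This bypasses the real-variable Lemma~\ref{ArcSl} and the citation of the inverse function theorem entirely, at the cost of the branch bookkeeping you correctly flag; the paper's route reuses the already-proved real statement, while yours is self-contained from Lemma~\ref{pro}\eqref{d},\eqref{h}. Both are correct.
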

Thus, we have $\Q[[u]]=\Q[[\Sl(u)]]$.


\subsection{Bernoulli-Hurwitz-type numbers for the Eisenstein integers}
In this section, we introduce Bernoulli-Hurwitz-type numbers for Eisenstein integers, and study their denominators for the proof of  Theorem~\ref{main1}.

We denote the coefficients of the power series expansion of $\Sl(u)$ around $u=0$ as
$$
\Sl(u)=\sum_{m=0}^{\infty}C_{3m+1}u^{3m+1}=\sum_{m=0}^{\infty}c_{3m+1}\fr{u^{3m+1}}{(3m+1)!}.
$$
(Note that, since $\Sl(\ro u)=\ro \Sl(u)$ by Lemma~\ref{pro} \eqref{r}, the coefficients of $u^n$ in $\Sl(u)$ are zero unless $n$ is congruent $1$ modulo $3$.)

\begin{lemma} (Integrality of $c_{n}$) \label{denc}
We have $c_{3m+1}\in \Z$ for any $m\geq 0$.
\end{lemma}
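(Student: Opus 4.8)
The plan is to derive a clean third-order ODE satisfied by $\Sl$ whose coefficients are integers, and then read off from it an integer recurrence for the $c_n$, from which integrality follows by a one-line induction. The crucial normalization choice is to work with the exponential-type coefficients $c_{n}$ (those divided by $n!$) rather than the $C_{n}$, so that the convolution coefficients appearing in the recurrence are multinomial coefficients and hence manifestly integral.

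First I would produce the ODE. By Lemma~\ref{pro} \eqref{d} we have $\Sl'(u)=\Cl(u)^2$ and $\Cl'(u)=-\Sl(u)^2$, so differentiating gives $\Sl''(u)=2\Cl(u)\Cl'(u)=-2\Sl(u)^2\Cl(u)$, and differentiating once more,
$$
\Sl'''(u)=-2\bigl(2\Sl(u)\Cl(u)^2\cdot\Cl(u)+\Sl(u)^2\bigl(-\Sl(u)^2\bigr)\bigr)=-4\Sl(u)\Cl(u)^3+2\Sl(u)^4.
$$
Substituting the identity $\Cl(u)^3=1-\Sl(u)^3$ from Lemma~\ref{pro} \eqref{h} yields the key equation
$$
\Sl'''(u)=6\Sl(u)^4-4\Sl(u),
$$
a third-order ODE with integer coefficients.

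Next I would expand in power series. Writing $\Sl(u)=\sum_{n\ge 0}c_{n}\fr{u^n}{n!}$ (with $c_{n}=0$ unless $n\equiv 1\bmod 3$, by Lemma~\ref{pro} \eqref{r}), we have $\Sl'''(u)=\sum_{n\ge 0}c_{n+3}\fr{u^n}{n!}$, while the coefficient of $\fr{u^m}{m!}$ in $\Sl(u)^4$ is $\sum_{i+j+k+l=m}\binom{m}{i,j,k,l}c_{i}c_{j}c_{k}c_{l}$. Comparing coefficients of $\fr{u^m}{m!}$ on both sides of the ODE gives the recurrence
$$
c_{m+3}=6\sum_{i+j+k+l=m}\binom{m}{i,j,k,l}c_{i}c_{j}c_{k}c_{l}-4c_{m},
$$
in which every coefficient (the multinomial coefficients, together with $6$ and $-4$) is an integer. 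Since $c_{0}=\Sl(0)=0$, $c_{1}=\Sl'(0)=\Cl(0)^2=1$, and $c_{2}=\Sl''(0)=0$, an induction on $m$ expresses each $c_{m+3}$ as an integral combination of products of earlier $c_i$ with $i\le m$, so $c_{n}\in\Z$ for all $n$.

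I do not anticipate a serious obstacle here: the entire argument reduces to the algebraic manipulation producing the ODE $\Sl'''=6\Sl^4-4\Sl$ and the observation that the resulting recurrence has integer coefficients. The only point requiring care is the normalization: with the $c_n$ convention the convolution weights are multinomial coefficients (integers), whereas the $C_{n}$ themselves carry denominators coming from $1/n!$, so one must keep track of the factorials correctly when comparing the two power series. The restriction $n\equiv 1\bmod 3$ guarantees the recurrence only ever involves the already-computed nonzero terms, so the induction closes without difficulty.
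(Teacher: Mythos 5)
Your proof is correct and follows essentially the same route as the paper: derive the differential equation $\Sl^{(3)}=6\Sl^4-4\Sl$ from Lemma~\ref{pro} \eqref{d} and \eqref{h}, and induct on the resulting integer recurrence for the Hurwitz coefficients $c_n$. The paper states this in two lines; you have simply made the multinomial-coefficient recurrence and the base cases explicit.
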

\begin{proof}
We see that 
\begin{equation} \label{des}
\Sl^{(3)}=6\Sl^4-4\Sl
\end{equation}
 by Lemma~\ref{pro} \eqref{d} and \eqref{h}. Hence, by induction on $m$, we have
$c_{3m+1}\in \Z$ for any $m\geq 0$.
\end{proof}

We denote the coefficients of the power series expansion of $\fr{1}{\Sl(u)}$ around $u=0$ as
\begin{equation} \label{defslinv}
\fr{1}{\Sl(u)}=\fr{1}{u}+\sum_{n=0}^{\infty}D_{3n+2}u^{3n+2}=\fr{1}{u}+\sum_{n=0}^{\infty}\fr{d_{3n+2}}{(3n+2)!}u^{3n+2}.
\end{equation}
We also put $D_{-1}:=1$, then we can express it as $\fr{1}{\Sl(u)}=\sum_{n=0}^{\infty}D_{3n-1}u^{3n-1}$.

We call $C_{3m+1}$, $D_{3n+2}$ {\it Bernoulli-Hurwitz-type numbers} for the Eisenstein integers.

\begin{remark} (Non-integrality of $d_{n}$) \label{remD}The function $f(u)=\fr{1}{\Sl(u)}$ satisfies a differential equation $f^{(3)}=4f-6f^4$, hence $D_{3m+2}\in \Q$ for $m\geq 0$. We have $D_{2}=\fr{1}{6}, D_{5}=-\fr{1}{252}, D_{8}=-\fr{1}{4536}$. Therefore, $d_{3n+2}=(3n+2)!D_{3n+2}$ is not necessarily a rational integer. However, Lemma~\ref{dend} below holds.
\end{remark}

\begin{definition}For a subring $R$ of $\Q$, we define
$$
\mathcal{H}_{R}:=\left\{\sum_{n=0}^{\infty}\fr{a_{n}}{n!}t^n \in \Q[[t]]\ \Big| \ \ a_{n}\in R\ \text{for every $n\geq 0$}\right\}.
$$
It is easy to show that this is a subring of $\Q[[t]]$. 
We call $\mathcal{H}_{R}$'s elements {\it Hurwitz-R-integer series}.
\end{definition}
Note that Lemma~\ref{denc} says $\Sl(u)\in \mathcal{H}_{\Z}$.

\begin{lemma} (Integrality of $2d_{6m+2}$ outside $3$)\label{dend}
For $m\geq 0$, we have $2d_{6m+2}=2(6m+2)!D_{6m+2}\in \Z\left[\fr{1}{3}\right]$.
\end{lemma}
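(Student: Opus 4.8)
The plan is to extract the \emph{even part} of the Laurent series $f(u):=\Sl(u)^{-1}$, because the $D_{6m+2}$ are exactly the coefficients of the even powers $u^{6m+2}$ of $f$: the pole term $u^{-1}$ is odd, and each $D_{6m+5}u^{6m+5}$ is odd, so these do not interfere. Concretely, $\fr{1}{2}\bigl(f(u)+f(-u)\bigr)=\sum_{m\geq 0}D_{6m+2}u^{6m+2}$, equivalently
$$
f(u)+f(-u)=\sum_{m\geq 0}\fr{2d_{6m+2}}{(6m+2)!}u^{6m+2},
$$
so it suffices to prove $f(u)+f(-u)\in \mathcal{H}_{\Z[\fr{1}{3}]}$; reading off the coefficient of $u^{6m+2}/(6m+2)!$ then yields $2d_{6m+2}\in \Z[\fr{1}{3}]$, the factor $2$ being precisely the $\fr{1}{2}$ undone.

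The key step is to put $f(u)+f(-u)$ into a closed form with visibly controlled denominators. By Lemma~\ref{pro} \eqref{-} we have $\Sl(-u)=-\Sl(u)/\Cl(u)$, whence $f(-u)=-\Cl(u)/\Sl(u)$ and
$$
f(u)+f(-u)=\fr{1-\Cl(u)}{\Sl(u)}.
$$
Substituting the expansion $\Cl(u)=1+\sum_{n\geq 1}(-1)^n\binom{\fr{1}{3}}{n}\Sl(u)^{3n}$ from \eqref{cl} produces the clean form
$$
f(u)+f(-u)=\sum_{n\geq 1}(-1)^{n-1}\binom{\fr{1}{3}}{n}\Sl(u)^{3n-1}.
$$

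Finally I would invoke integrality. By Lemma~\ref{denc}, $\Sl(u)\in \mathcal{H}_{\Z}\subset \mathcal{H}_{\Z[\fr{1}{3}]}$, and since $\mathcal{H}_{\Z[\fr{1}{3}]}$ is a ring each power $\Sl(u)^{3n-1}$ lies in it; as $\binom{\fr{1}{3}}{n}\in \Z[\fr{1}{3}]$, every summand lies in $\mathcal{H}_{\Z[\fr{1}{3}]}$. Because $\Sl(u)^{3n-1}$ begins in degree $3n-1$, only finitely many summands contribute to any fixed coefficient, so membership passes through the (formally) infinite sum coefficient by coefficient, giving $f(u)+f(-u)\in \mathcal{H}_{\Z[\fr{1}{3}]}$ and hence the lemma. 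I do not expect a genuine obstacle here, only bookkeeping: the two points needing care are (i) confirming that the even part collects exactly the $D_{6m+2}$ while the odd pole term $u^{-1}$ drops out, and (ii) checking that the defining coefficient-wise condition for $\mathcal{H}_{\Z[\fr{1}{3}]}$ survives the infinite summation, which it does since each degree receives only finitely many contributions. As everything is a manipulation of formal Laurent series over $\Q$, no analytic convergence issue arises.
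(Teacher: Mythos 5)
Your proposal is correct and follows essentially the same route as the paper: both form the even part $\Sl(u)^{-1}+\Sl(-u)^{-1}=2\sum_m D_{6m+2}u^{6m+2}$, rewrite it as a power series in $\Sl(u)$ with coefficients in $\Z\left[\fr{1}{3}\right]$, and conclude via $\Sl(u)\in\mathcal{H}_{\Z}$ and the ring structure of $\mathcal{H}_{\Z[1/3]}$. The only (harmless) difference is that you obtain the explicit closed form $\fr{1-\Cl(u)}{\Sl(u)}=\sum_{n\geq 1}(-1)^{n-1}\binom{1/3}{n}\Sl(u)^{3n-1}$ directly from Lemma~\ref{pro}~\eqref{-} and \eqref{cl}, whereas the paper cites Lemma~\ref{addsl} with $r=-1$ to write $\Sl(-u)^{-1}=-\Sl(u)^{-1}+g(\Sl(u))$ with $g\in t^2\Z\left[\fr{1}{3}\right][[t^3]]$.
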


\begin{proof}
We write $f(u):=\fr{1}{\Sl(u)}+\fr{1}{\Sl(-u)}$. Then, $f(u)=2\sum_{m=0}^{\infty}D_{6m+2}u^{6m+2}=2\sum_{m=0}^{\infty}\fr{d_{6m+2}}{(6m+2)!}u^{6m+2}$ by \eqref{defslinv}.

On the other hand, by Lemma~\ref{addsl}, we can write $\fr{1}{\Sl(-u)}=-\fr{1}{\Sl(u)}+g(\Sl(u))$, where $g \in t^2\Z\left[\fr{1}{3}\right][[t^3]]$ ($t$ is a variable). Then, $f(u)=g(\Sl(u))$. Here, by noting that $\Sl(u)=\sum_{n=0}^{\infty}\fr{c_{3n+1}}{(3n+1)!}u^{3n+1}\in \mathcal{H}_{\Z}$ by Lemma~\ref{denc}, and that $\Sl(u)$ does not have constant term as the expansion in terms of $u$, we have $f(u)\in \mathcal{H}_{\Z\left[\fr{1}{3}\right]}$. 

Therefore, by comparing the coefficients, we have $2d_{6m+2}\in \Z\left[\fr{1}{3}\right]$.
\end{proof}
\begin{remark}
\begin{enumerate}[(1)]
\item Moreover, we can show $d_{6m+2}\in \Z\left[\fr{1}{3}\right]$. See Lemma~\ref{dend'} (which is not used to show the main congruences in Theorem~\ref{main1}).

\item For the denominators of $d_{6m-1}$, see Lemma~\ref{den-1}  (which is not used to show the main congruences in Theorem~\ref{main1}).
\end{enumerate}
\end{remark}

\subsection{Eisenstein's product formula}
We recall that the arithmetic properties of $\Sl$ at division points.
\begin{definition} (Primarity)\label{pri}
For $a\in \Z[\ro]$, we call $a$ {\it primary} if $a\equiv 1\bmod 3$.
This definition is slightly different from the ones in \cite[\S 7.1]{Le} ($a\equiv \pm 1 \bmod 3$) and \cite[\S 3 of Chap. 9]{IR} ($a\equiv -1 \bmod 3$).
\end{definition}
Note that, for a prime ideal $\mathfrak{p} (\nmid (3))$, there uniquely exists a primary element $\mu$ such that $\mathfrak{p}=(\mu)$. Let $\ell$ be a rational prime number such that $\ell \equiv 1 \bmod 3$ and fix $\la \in \Z[\ro]$ satisfying
$$
\ell=\la\overline{\la} \ \ \text{and } \la \equiv 1 \bmod 3.
$$
Note that there are precisely two choices of such $\la$, i.e., $\la$ and $\ol{\la}$. 
Then, there is a canonical isomorphism $\Z[\ro]/(\la) \simeq \Z/\ell\Z$.
Let $\chi_{\la}$ be the {\it cubic residue character} to the modulus $\la$ ; the notation is fixed throughout this paper:
$$
\chi_{\la}(\nu):=\left(\fr{\nu}{\la}\right)_{3} ,\ \chi_{\la}(\nu)^3=1, \text{ and } \chi_{\la}(\nu)\equiv \nu^{(\ell-1)/3}\  \bmod \la \ \  (\nu \in (\Z[\ro]/(\la))^{\times}).
$$
We write
$$
\La:=\varphi \left(\fr{1}{\la}\right)=\Sl \left(\fr{-3\pe}{\la}\right).
$$
Note that $\La$ is algebraic and $[\Q(\ro,\La) : \Q(\ro)]=\ell-1$ (see  \cite[Lemma 1.3]{Asai}). Let $\sO_{\la}$ be the ring of integers of $\Q(\ro,\La)$.
We recall the properties of $\vp{\fr{r}{\la}}$'s from \cite{Asai}:
\begin{lemma} (Arithmetic properties of $\varphi$ at division points)\label{Epro} (\cite[Lemma 1.3 of \S 1.1.2.,  Definition 1.11 of \S 1.3.2. with Remark, and Appendix of \S 1 3. (xi)]{Asai}) In the notation above, we have the following:
\begin{enumerate}[(1)]
\item We have $\La \in \sO_{\la}$. Moreover, $\La$ is a prime element in $\sO_{\la}$ and has a property $(\la)=(\La)^{\ell-1}$ as ideals of $\sO_{\la}$. For any $r\not\equiv 0 \bmod \ell$, it holds that $(\varphi(\fr{r}{\la}))=(\La)$ as ideals of $\sO_{\la}$. \label{E1}
\item (Eisenstein's product formula) Let $\widetilde{\la}:=\prod_{\chi_{\la}(r)=1}\vp{\fr{r}{\la}}$. Then $\widetilde{\la}^3=\la$. \label{E2}
\end{enumerate}
\end{lemma}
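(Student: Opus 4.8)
The plan is to transport to the complex-multiplication setting the classical template by which the cyclotomic identity $\prod_{\zeta^{\ell}=1,\,\zeta\neq 1}(1-\zeta)=\ell$ forces the total ramification $(\ell)=(1-\zeta_{\ell})^{\ell-1}$ in $\Q(\zeta_{\ell})$. The two structural inputs are the scaling relation $\varphi(\ro u)=\ro\varphi(u)$ and the Fermat-cubic relation $\varphi(u)^{3}+\psi(u)^{3}=1$ of Lemma~\ref{pro}, together with the observation that each $r/\la$ with $r\in\Z[\ro]$ is a genuine $\la$-division point, since $\varphi\bigl(\la\cdot\fr{r}{\la}\bigr)=\vp{r}=0$ by $\Z[\ro]$-periodicity.

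For part~\eqref{E1}, I would first record the divisor of $\varphi$ from Lemma~\ref{ee} and Lemma~\ref{pro}~\eqref{phi}: modulo $\Z[\ro]$ the function $\varphi$ has simple zeros at $0,\ \fr{1-\ro}{3},\ -\fr{1-\ro}{3}$ and simple poles at $\fr{1}{3},\fr{\ro}{3},\fr{\rob}{3}$, with $\varphi'(0)=-3\pe\neq 0$ by Lemma~\ref{pro}~\eqref{d}. Since multiplication by $\la$ is an \'etale isogeny of $\C/\Z[\ro]$, the function $u\mapsto\varphi(\la u)$ is again $\Z[\ro]$-elliptic and vanishes to first order at every torsion point $r/\la$; comparing leading terms at $u=0$ gives $\varphi(\la u)/\varphi(u)\to\la$. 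Integrality of $\La$ and the degree $[\Q(\ro,\La):\Q(\ro)]=\ell-1$ I take from \cite[Lemma~1.3]{Asai} ($\La$ is a root of the monic $\la$-division polynomial in the $\varphi$-coordinate, so $\La\in\sO_{\la}$). The core is then (i) that the ratios $\vp{\fr{r}{\la}}/\vp{\fr{s}{\la}}$ for $r,s\not\equiv 0$ are units in $\sO_{\la}$, so that every $\vp{\fr{r}{\la}}$ generates one and the same ideal $(\La)$; and (ii) a product formula $\prod_{r\in(\Z[\ro]/(\la))^{\times}}\vp{\fr{r}{\la}}=\la$ up to a root of unity. For (i) I would use the addition and multiplication formulas of Lemmas~\ref{addsl'} and~\ref{addsl} to write $\vp{\fr{r}{\la}}$ as $\La$ times a unit (the elliptic analogue of the cyclotomic unit $(1-\zeta^{a})/(1-\zeta)$), exploiting that $r$ is invertible modulo $\la$ so that $\fr{1}{\la}$ and $\fr{r}{\la}$ generate the same cyclic group of $\la$-torsion. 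For (ii) I would write $\varphi$ as a ratio of Weierstrass $\sigma$-functions for $\pe\Z[\ro]$ and apply the transformation formula of $\sigma$ under $[\la]$ (equivalently a distribution relation for $\sZ{\cdot}$), the limit $\varphi(\la u)/\varphi(u)\to\la$ fixing the constant. Combining (i) and (ii) yields $(\La)^{\ell-1}=(\la)$; as this exponent equals $[\Q(\ro,\La):\Q(\ro)]$, the prime $\la$ is totally ramified in $\sO_{\la}$ and $\La$ is a uniformizer, hence prime.

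For part~\eqref{E2}, part~\eqref{E1} already gives the ideal equality $(\widetilde{\la})^{3}=(\La)^{\ell-1}=(\la)$, so $\widetilde{\la}^{3}/\la$ is a unit and the entire content is to show this unit is $1$. Writing $P_{\omega}:=\prod_{\chi_{\la}(r)=\omega}\vp{\fr{r}{\la}}$ for $\omega\in\{1,\ro,\rob\}$, so that $\widetilde{\la}=P_{1}$, the scaling relation $\vp{\fr{\ro r}{\la}}=\ro\,\vp{\fr{r}{\la}}$ gives $P_{\chi_{\la}(\ro)\omega}=\ro^{\fr{\ell-1}{3}}P_{\omega}$; since $\ro^{\fr{\ell-1}{3}}$ is a cube root of unity, the three cubes $P_{\omega}^{3}$ coincide (this uses $\chi_{\la}(\ro)\neq 1$, which holds in the relevant range $\ell\not\equiv 1\bmod 9$), so $\widetilde{\la}^{3}=P_{1}^{3}$ is Galois-stable, i.e.\ $\widetilde{\la}^{3}\in\Q(\ro)$. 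Then $\widetilde{\la}^{3}/\la$ is a unit of $\Z[\ro]$, hence an element of $W=\{\pm1,\pm\ro,\pm\rob\}$, and I would pin it down to $1$ using the primary normalization $\la\equiv 1\bmod 3$ together with a congruence for $\widetilde{\la}$ modulo $3$ (or an archimedean positivity comparison on the real-period interval afforded by Lemma~\ref{ArcSl}).

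I expect the main obstacle to be precisely this elimination of the root-of-unity ambiguity and, upstream of it, the pinning of the constant in the product formula (ii) \emph{on the nose} rather than merely up to units. The ideal-theoretic steps are clean and formal, but the exact equalities $\prod_{r}\vp{\fr{r}{\la}}=\la$ and $\widetilde{\la}^{3}=\la$ are arithmetically delicate and are exactly where the primary normalization $\la\equiv 1\bmod 3$ and the fine behaviour of $\chi_{\la}$ (its value on $\ro$, governed by $\ell\bmod 9$) must enter.
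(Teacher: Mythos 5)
First, a point of comparison: the paper does not prove this lemma at all. Both assertions are imported verbatim from Asai --- the header cites \cite[Lemma 1.3, Definition 1.11 with Remark, and Appendix of \S 1, 3.(xi)]{Asai}, and the surrounding text says only that it ``recalls'' these properties. So your proposal is competing with a citation, not with an internal argument. That said, your outline of part (1) is the right classical template: granting integrality of $\La$ and the degree $[\Q(\ro,\La):\Q(\ro)]=\ell-1$ from Asai, the expansion $\vp{\fr{r}{\la}}=r\La\bigl(1+\fr{1}{r}\La^{3}f_{r}(\La^{3})\bigr)$ of Lemma~\ref{addsl} (which the paper itself exploits in the proof of Lemma~\ref{lala}) together with the mutual-divisibility trick for $rs\equiv 1\bmod\la$ makes the ratios $\vp{\fr{r}{\la}}/\vp{\fr{s}{\la}}$ units, and a multiplication formula for $\varphi(\la u)$ then yields $(\La)^{\ell-1}=(\la)$, total ramification, and primality of $\La$. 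Since part (1) is an assertion about ideals only, the root-of-unity ambiguity is harmless there.

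The genuine gap is in part (2), exactly where you flag it, and neither of your proposed closing moves works as stated. The ideal computation only gives $\widetilde{\la}^{3}=w\la$ with $w\in W=\{\pm1,\pm\ro,\pm\rob\}$, and: (a) the archimedean positivity route is unavailable, because $\widetilde{\la}$ is not real --- by Lemma~\ref{pro}~(5), complex conjugation sends $\vp{\fr{r}{\la}}$ to $\vp{\fr{\ol{r}}{\ol{\la}}}$, a $\ol{\la}$-division value, so the set of factors of $\widetilde{\la}$ is not conjugation-stable and Lemma~\ref{ArcSl} gives no purchase; (b) ``a congruence for $\widetilde{\la}$ modulo $3$'' is precisely the content of Eisenstein's theorem and has not been established --- the only congruence your machinery produces is $\widetilde{\la}\equiv-\La^{(\ell-1)/3}\bmod\La^{(\ell-1)/3+1}$ (Lemma~\ref{lala}), which lives at the prime above $\ell$ rather than at $(1-\ro)$ and cannot separate the six candidates for $w$ without an independent normalization of the unit $\la/\La^{\ell-1}$, which is circular. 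Similarly, in your step (ii) the limit $\varphi(\la u)/\varphi(u)\to\la$ pins down the \emph{linear} coefficient of the multiplication formula, whereas $\prod_{r}\vp{\fr{r}{\la}}$ is governed by a different coefficient, and relating the two reintroduces exactly the root-of-unity ambiguity you are trying to kill. This unit determination, which genuinely uses the primary normalization $\la\equiv1\bmod 3$ and a fine analysis at $(1-\ro)$, is the actual theorem here; the paper sidesteps it by citing Asai, and your proposal correctly isolates it but does not supply it. (A minor additional caveat: your Galois-stability argument for $\widetilde{\la}^{3}\in\Q(\ro)$ silently invokes the CM description of $\mathrm{Gal}(\Q(\ro,\La)/\Q(\ro))$ acting by $\vp{\fr{r}{\la}}\mapsto\vp{\fr{cr}{\la}}$, a substantive input you should cite, and it breaks down for $\ell\equiv1\bmod 9$, though the lemma as stated in the paper is not restricted to $\ell\equiv 4,7\bmod 9$.)
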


\begin{lemma}\label{lala}
$$
\widetilde{\la}^2\equiv \La^{\fr{2(\ell-1)}{3}} \bmod \La^{\fr{2(\ell-1)}{3}+1}.
$$
\end{lemma}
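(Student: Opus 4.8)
The plan is to compute the leading coefficient of the $\La$-adic expansion of $\widetilde{\la}^2$ directly and show it equals $1$. Set $k:=\fr{\ell-1}{3}$. Since $\chi_{\la}$ has exactly $k$ cube-residue classes, $\widetilde{\la}=\prod_{\chi_{\la}(r)=1}\vp{\fr{r}{\la}}$ is a product of $k$ factors, each generating the ideal $(\La)$ by Lemma~\ref{Epro} \eqref{E1}. Hence $(\widetilde{\la}^2)=(\La)^{2k}=(\La^{\fr{2(\ell-1)}{3}})$, so $\widetilde{\la}^2/\La^{2k}$ is a unit of $\sO_{\la}$, and it suffices to prove that this unit is $\equiv 1\bmod \La$. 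Here the residue field $\sO_{\la}/(\La)$ is $\F_{\ell}$: since $(\la)=(\La)^{\ell-1}$ and $[\Q(\ro,\La):\Q(\ro)]=\ell-1$, the prime $\La$ is totally ramified over $\la$, so it has the same residue field $\F_{\ell}$ as $\la$ in $\Z[\ro]$.

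First I would pin down each factor modulo $\La^2$. Because $\vp{\fr{r}{\la}}=\Sl\!\left(\fr{-3\pe r}{\la}\right)$ and $\La=\Sl\!\left(\fr{-3\pe}{\la}\right)$, Lemma~\ref{addsl} gives the formal identity $\Sl(ru)\in r\Sl(u)+\Sl(u)^4\Z[\fr{1}{3}][[\Sl(u)^3]]$; substituting $\Sl(u)=\La$ I would conclude
$$
\vp{\fr{r}{\la}}\equiv r\La \bmod \La^{2}\qquad (r\not\equiv 0 \bmod \ell).
$$
The substitution is legitimate $\La$-adically because $\La$ is a uniformizer at the prime $\La$ and $3$ is a unit there (as $\ell\equiv 1\bmod 3$ forces $\ell\neq 3$), so the series in $\Z[\fr{1}{3}][[\Sl(u)^3]]$ converges $\La$-adically and contributes only terms of $\La$-order $\geq 4$. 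The genuinely delicate point is that the honest algebraic value $\vp{\fr{r}{\la}}$ really equals the $\La$-adic value of the formal series, i.e. that the analytic power-series identity (a priori living in $\Q[[u]]$) transfers to an exact congruence of algebraic numbers; I would justify this from the addition formula of Lemma~\ref{addsl'}, which expresses $\vp{\fr{r+1}{\la}}$ as a rational function of $\vp{\fr{r}{\la}},\vp{\fr{1}{\la}}$ and the corresponding $\Cl$-values, yielding an inductive algebraic recursion that matches the formal group law over $\Z[\fr{1}{3}]$ once the correct (principal, $\equiv 1\bmod\La$) branch of $\Cl$ is fixed.

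Granting this, I would multiply over the $k$ cube residues, choosing integer representatives $r\in\{1,\dots,\ell-1\}$ for the classes (possible since $\Z\twoheadrightarrow\Z[\ro]/(\la)\cong\F_\ell$). Each factor has $\La$-order $1$ with leading coefficient $r$, so
$$
\widetilde{\la}\equiv P\,\La^{k}\bmod \La^{k+1},\qquad \widetilde{\la}^2\equiv P^2\La^{2k}\bmod \La^{2k+1},\qquad P:=\prod_{\chi_{\la}(r)=1}r .
$$
It remains to see $P^2\equiv 1\bmod \ell$. Modulo $\La$ (i.e. in $\F_{\ell}^{\times}$) the integers $r$ with $\chi_{\la}(r)=1$ run exactly over the subgroup $\ker\chi_{\la}$ of cube residues, and $P$ reduces to the product of all its elements. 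Since $g\mapsto g^{-1}$ permutes any finite abelian group, $\prod_{g\in\ker\chi_{\la}}g$ equals its own inverse, whence $P^2\equiv 1\bmod \ell$. As $\La\mid \la\mid \ell\mid P^2-1$, we get $P^2\La^{2k}\equiv \La^{2k}\bmod \La^{2k+1}$, giving $\widetilde{\la}^2\equiv \La^{2k}=\La^{\fr{2(\ell-1)}{3}}\bmod \La^{\fr{2(\ell-1)}{3}+1}$.

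The main obstacle, as indicated, is the $\La$-adic legitimacy of the substitution $\Sl(u)\mapsto\La$ in Lemma~\ref{addsl}: one must ensure the formal identity transfers to an exact congruence between the algebraic numbers $\vp{\fr{r}{\la}}$ and $r\La$ in the completion at $\La$, and in particular that the relevant $\Cl$-values sit on the branch compatible with the formal group. Everything else---the ideal factorization, the product bookkeeping, and the group-theoretic fact $P^2\equiv1$---is routine.
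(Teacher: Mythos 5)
Your proof is correct and follows essentially the same route as the paper: expand each factor $\vp{\fr{r}{\la}}\equiv r\La \bmod \La^{2}$ via Lemma~\ref{addsl}, conclude $\widetilde{\la}\equiv P\La^{\fr{\ell-1}{3}} \bmod \La^{\fr{\ell-1}{3}+1}$ with $P$ the product of the cube residues, and reduce to $P^{2}\equiv 1 \bmod \ell$. The only (harmless) difference is that the paper computes $P\equiv -1$ explicitly using a primitive root, whereas you deduce $P^{2}\equiv 1$ from the fact that inversion permutes the subgroup $\ker\chi_{\la}$ of $\F_{\ell}^{\times}$.
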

\begin{proof}
By Lemma~\ref{addsl}, for $r\in \Z$, we can write 
$$
\vp{\fr{r}{\la}}=r\La+\La^4f_{r}(\La^3)=r\La\left(1+\fr{1}{r}\La^3f_{r}(\La^3)\right),
$$
where $f_{r}(t)\in \Z[\fr{1}{3}][[t]]$, 
($t$ is a  variable).
Let $g\in \Z$ be a primitive root of $1$ modulo $\ell$. Then, since $\ker(\chi_{\la})=\{g^{3j} \bmod \la \mid j=1,\ldots, \fr{\ell-1}{3}\}$ (note that $(\Z[\ro]/(\la))^{\times} \simeq (\Z/\ell\Z)^{\times}$), we have
\begin{align} \label{tla}
\begin{split}
\widetilde{\la}=
\left(\prod_{j=1}^{\fr{\ell-1}{3}}g^{3j}\right)\La^{\fr{\ell-1}{3}}\cdot \prod_{j=1}^{\fr{\ell-1}{3}}\left(1+\fr{1}{g^{3j}}\La^3f_{g^{3j}}(\La^3)\right).
\end{split}
\end{align}
Since $g^{3j}$ is relatively prime to $\ell$ for any $j=1,\ldots,\fr{\ell-1}{3}$,
$$
\prod_{j=1}^{\fr{\ell-1}{3}}\left(1+\fr{1}{g^{3j}}tf_{g^{3j}}(t)\right)\in 1+t\Z\left[\fr{1}{3},\fr{1}{g^{3j}}: j=1,\ldots,\fr{\ell-1}{3}\right][[t]]\subseteq 1+t\Z_{(\ell)}[[t]],
$$
hence,
$$
\prod_{j=1}^{\fr{\ell-1}{3}}\left(1+\fr{1}{g^{3j}}\La^3f_{g^{3j}}(\La^3)\right)
\in \sO_{\la, \La},
$$
where $\sO_{\la, \La}$ is the completion of $\sO_{\la}$ by $\La$.  
Thus, by taking $\bmod \ \La^{\fr{\ell-1}{3}+1}$ of \eqref{tla}, we have  
$$
\widetilde{\la}\equiv \left(\prod_{j=1}^{\fr{\ell-1}{3}}g^{3j}\right)\La^{\fr{\ell-1}{3}} \equiv -\La^{\fr{\ell-1}{3}} \bmod \La^{\fr{\ell-1}{3}+1},
$$
since $\prod_{j=1}^{\fr{\ell-1}{3}}g^{3j}=(g^\fr{\ell-1}{2})^\fr{\ell+2}{3}\equiv -1 \bmod \ell.$
We note that $\fr{\ell+2}{3}$ is odd because $\ell$ is odd prime.
Thus, there exists $a\in \mathscr{O}_{\la}$ such that $\widetilde{\la}=-\La^{\fr{\ell-1}{3}}+a\La^{\fr{\ell-1}{3}+1}=-\La^{\fr{\ell-1}{3}}(1-a\La)$. We have
$$
\widetilde{\la}^2=\La^{\fr{2}{3}(\ell-1)}(1-a\La)^2
\equiv \La^{\fr{2}{3}(\ell-1)}\bmod \La^{\fr{2}{3}(\ell-1)+1}.
$$
\end{proof}

\section{Elliptic Gauss sums} \label{EGs}
In this chapter, we introduce the elliptic Gauss sums and show the main congruences between the Bernoulli-Hurwitz-type numbers and the coefficients of the elliptic Gauss sums. In \S 2.1, we introduce the elliptic Gauss sums and their coefficients due to Asai. In \S 2.2, we show the main congruences and give some examples.

Let $\ell, \la ,\widetilde{\la}, \chi_{\la}$ and $\La$ be as in Chapter~\ref{Ef}. 

\subsection{Elliptic Gauss sums and their coefficients}
In this section, we introduce the elliptic Gauss sums and their coefficients.

We put
$$f:=\left\{
\begin{array}{ll}
\varphi & \text{if\ } \ell \equiv 7 \bmod 9, \\
\varphi^{-1} & \text{if\ } \ell \equiv 4 \bmod 9.
\end{array}
\right.
$$
In this paper, we do not treat the case where $\ell \equiv 1\bmod 9$, since the Hecke $L$-value $L(\widetilde{\chi_{\la}},1)$, which is described by an elliptic Gauss sum (cf. \cite[Theorem 1.16]{Asai}), can vanish (cf. \cite[Remark of Corollary 1.22]{Asai}) in the case where $\ell \equiv 1\bmod 9$. Note that, in the cases where $\ell \equiv 7,4\bmod 9$, the Hecke $L$-value $L(\widetilde{\chi_{\la}},1)$ does not vanish (cf. \cite[Corollary 1.22]{Asai}). 
\begin{definition} (Elliptic Gauss sum) \cite[Definition 1.8]{Asai} We define
\begin{equation}
G_{\la}(\chi_{\la},f):=\fr{1}{3}\sum_{r \in (\Z[\ro]/(\la))^{\times}}\chi_{\la}(r)f\left(\fr{r}{\la}\right),
\end{equation}
and call $G_{\la}(\chi_{\la},f)$ the {\it elliptic Gauss sum} associated to $\chi_{\la}$ and $f$.
\end{definition}

\begin{lemma} \label{EGS}
\begin{enumerate}[(1)]
\item $G_{\la}(\chi_{\la},f)\in \mathscr{O}_{\la}$. \label{Oi}
\item $G_{\la}(\chi_{\la},f)=\sum_{r \in \ker(\chi_{\la})}f\left(\fr{r}{\la}\right)$. \label{k}
\end{enumerate}
\begin{proof}
\eqref{Oi} See \cite[Lemma 1.9]{Asai}. 

\eqref{k} We have $(\Z[\ro]/(\la))^{\times}=S\cup \ro S\cup \rob S$ (disjoint union), where $S:=\ker(\chi_{\la})$, if $\ell \equiv 7 ,4 \bmod 9$.
If $\ell \equiv 7 \bmod 9$ (resp. $\ell \equiv 4 \bmod 9$), then $f=\varphi$ (resp. $f=\varphi^{-1}$), $\vp{\ro u}=\ro \vp{u}$ (resp. $\vp{\ro u}^{-1}=\rob \vp{u}^{-1}$) (by Lemma~\ref{pro} \eqref{r}), and $\chi_{\la}(\ro)=\ro^{\fr{\ell-1}{3}}=\rob$ (resp. $\chi_{\la}(\ro)=\ro^{\fr{\ell-1}{3}}=\ro$) (by the definition of $\chi_{\la}$). Hence, we have
$G_{\la}(\chi_{\la},f)=\fr{1}{3}\sum_{r \in S}\left(f\left(\fr{r}{\la}\right)+\chi_{\la}(\ro)f\left(\fr{\ro r}{\la}\right)+\chi_{\la}(\rob)f\left(\fr{\rob r}{\la}\right) \right)=\sum_{r \in S}f\left(\fr{r}{\la}\right)$.
\end{proof}
\end{lemma}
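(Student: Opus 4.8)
The plan is to establish the reduction \eqref{k} first, as it is essentially a reorganization of the defining sum, and then to read off the integrality \eqref{Oi} from it together with one deeper input. Accordingly I would reverse the order in which the two parts are listed.

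\textbf{Reduction \eqref{k}.} Since $(\Z[\ro]/(\la))^{\times}\simeq \F_{\ell}^{\times}$ is cyclic of order $\ell-1$ and $\chi_{\la}$ has order $3$, the subgroup $S:=\ker(\chi_{\la})$ has index $3$. First I would check that $\ro$ represents a nontrivial coset: from $\chi_{\la}(\ro)=\ro^{(\ell-1)/3}$ and $(\ell-1)/3\equiv 2$ (resp.\ $1$) $\bmod 3$ when $\ell\equiv 7$ (resp.\ $4$) $\bmod 9$, one gets $\chi_{\la}(\ro)=\rob$ (resp.\ $\ro$), which is $\neq 1$ in either case; hence $(\Z[\ro]/(\la))^{\times}=S\sqcup \ro S\sqcup \rob S$. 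I would then split the defining sum along these three cosets and substitute $r\mapsto \ro r,\ \rob r$ in the second and third pieces, so that everything is expressed as a sum over $S$, on which $\chi_{\la}\equiv 1$. The crux is the matching between the character twist and the homogeneity of $f$: since $\vp{\ro u}=\ro\vp{u}$ by Lemma~\ref{pro}\eqref{r}, one also has $\vp{\ro u}^{-1}=\rob\vp{u}^{-1}$. For $\ell\equiv 7\bmod 9$ (so $f=\varphi$, $\chi_{\la}(\ro)=\rob$) the $\ro S$-piece contributes $\chi_{\la}(\ro)\vp{\fr{\ro r}{\la}}=\rob\cdot\ro\,\vp{\fr{r}{\la}}=\vp{\fr{r}{\la}}$, and the $\rob S$-piece behaves identically; thus each of the three cosets yields $\sum_{r\in S}\vp{\fr{r}{\la}}$ and the prefactor $\fr{1}{3}$ exactly cancels the multiplicity. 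The case $\ell\equiv 4\bmod 9$ is the mirror image, using $\chi_{\la}(\ro)=\ro$ together with $\vp{\ro u}^{-1}=\rob\vp{u}^{-1}$. This gives \eqref{k}.

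\textbf{Integrality \eqref{Oi}.} With \eqref{k} available, the case $\ell\equiv 7\bmod 9$ is immediate: $G_{\la}(\chi_{\la},\varphi)=\sum_{r\in S}\vp{\fr{r}{\la}}$ is a finite sum of elements of $\sO_{\la}$, because each $\vp{\fr{r}{\la}}$ generates the prime ideal $(\La)$ by Lemma~\ref{Epro}\eqref{E1} and so lies in $\sO_{\la}$. The main obstacle is the case $\ell\equiv 4\bmod 9$, where $G_{\la}(\chi_{\la},\varphi^{-1})=\sum_{r\in S}\vp{\fr{r}{\la}}^{-1}$ is a sum of elements each having a pole along $(\La)$, so integrality is a genuine cancellation phenomenon and is not visible term by term.

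I do not expect a bare-hands argument to suffice here, and would ultimately invoke Asai's integrality result \cite[Lemma 1.9]{Asai}. It is nonetheless worth recording where the integrality lives: since $\chi_{\la}(-1)=1$ (as $-1$ has order $2$ while $\chi_{\la}$ has order $3$), one has $S=-S$, and pairing $r$ with $-r$ lets me apply Lemma~\ref{pro}\eqref{g}, $\vp{u}^{-1}+\vp{-u}^{-1}=3\,\x{\pe u}^{-1}$, to rewrite the sum in terms of the values $\x{\pe u}^{-1}$ at the $\ell$-torsion points $u=\fr{r}{\la}$. These $\wp$-values are roots of the monic integral division polynomial, and the product of all the conjugates $\vp{\fr{r}{\la}}$ is pinned down by Eisenstein's product formula $\widetilde{\la}^{3}=\la$ (Lemma~\ref{Epro}\eqref{E2}), which shows the only possible denominators are supported at $\la$. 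However, because the sum is restricted to $\ker(\chi_{\la})$ rather than the full group (the unrestricted character-free sum collapses to $0$ by the same homogeneity), this soft reasoning does not by itself deliver integrality of the weighted sum, and the finer analysis of \cite{Asai} is what is actually needed.
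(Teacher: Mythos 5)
Your proof of \eqref{k} is exactly the paper's argument (the coset decomposition $(\Z[\ro]/(\la))^{\times}=S\sqcup\ro S\sqcup\rob S$ with the homogeneity of $\varphi^{\pm1}$ cancelling the character twist), and for \eqref{Oi} you ultimately invoke the same citation to Asai's Lemma 1.9 that the paper does, so the approach is essentially identical. Your added observation that \eqref{k} plus Lemma~\ref{Epro}\eqref{E1} already yields integrality in the $\ell\equiv 7\bmod 9$ case, and your honest acknowledgement that the $\ell\equiv 4\bmod 9$ case requires genuine cancellation not visible term by term, are both correct.
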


We shall recall Asai's results from \S 1 of \cite{Asai}:
\begin{theorem} (The descent of $\chi_{\la}(3)^{-1}\al$ or $\ol{\chi_{\la}}(3)^{-1}\al$ and the non-vanishing) \cite[Definition 1.12 and Theorem 1.19]{Asai}\label{defco}
\begin{enumerate}[(1)]
\item \label{7}
If $\ell \equiv 7 \bmod 9$, then there exists $\al \in \chi_{\la}(3)(1+3\Z)$ such that
$$
G_{\la}(\chi_{\la},\varphi)=\al \widetilde{\la}^2.
$$
\item \label{4}
If $\ell \equiv 4 \bmod 9$, then there exists $\al \in \overline{\chi_{\la}}(3)(-1+3\Z)$ such that
$$
G_{\la}(\chi_{\la},\varphi^{-1})=\al \widetilde{\la}^2.
$$
\end{enumerate}
In particular, $\al$ does not vanish in the cases of $\ell \equiv 7, 4 \bmod 9$.
\end{theorem}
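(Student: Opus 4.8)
The plan is to establish the three assertions in turn — that $\al := G_{\la}(\chi_{\la},f)/\widetilde{\la}^2$ lies in $\Q(\ro)$, that it lands in the precise coset $\chi_{\la}(3)(1+3\Z)$ (resp.\ $\ol{\chi_{\la}}(3)(-1+3\Z)$), and that it is nonzero — the last being a formal consequence of the second, since $0\notin 1+3\Z$ and $0\notin -1+3\Z$. First I would prove the descent $\al\in\Q(\ro)$ by Galois theory. Using the CM reciprocity law one identifies $\mathrm{Gal}(\Q(\ro,\La)/\Q(\ro))$ with $(\Z[\ro]/(\la))^{\times}$ (compatibly with $[\Q(\ro,\La):\Q(\ro)]=\ell-1$ in Lemma~\ref{Epro}), so that the automorphism $\sigma_t$ attached to $t$ acts on division values by $\sigma_t\bigl(\vp{r/\la}\bigr)=\vp{tr/\la}$. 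Reindexing the defining sum gives $\sigma_t\bigl(G_{\la}(\chi_{\la},f)\bigr)=\ol{\chi_{\la}}(t)\,G_{\la}(\chi_{\la},f)$, while grouping the factors of $\widetilde{\la}$ into cosets of $\ker\chi_{\la}$ and using $\vp{\ro u}=\ro\vp{u}$ (Lemma~\ref{pro}~(5)) together with $\chi_{\la}(\ro)=\ro^{(\ell-1)/3}$ gives $\sigma_t(\widetilde{\la})=\chi_{\la}(t)\widetilde{\la}$, hence $\sigma_t(\widetilde{\la}^2)=\chi_{\la}(t)^2\widetilde{\la}^2=\ol{\chi_{\la}}(t)\widetilde{\la}^2$. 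The two laws cancel, so $\al$ is invariant and therefore lies in $\Q(\ro)$.

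Next I would prove integrality and compute the exact $\La$-adic valuation. Away from $(\la)$ integrality is automatic: since $\widetilde{\la}^3=\la$, one has $v_{\mathfrak{q}}(\widetilde{\la}^2)=0$ for every prime $\mathfrak{q}\neq(\la)$ of $\Z[\ro]$, while $G_{\la}(\chi_{\la},f)\in\sO_{\la}$ by Lemma~\ref{EGS}~(1), so $v_{\mathfrak{q}}(\al)\geq 0$. At $(\la)$, which is totally ramified with $v_{\La}(\la)=\ell-1$, one has $v_{\La}(\widetilde{\la}^2)=\fr{2(\ell-1)}{3}$, so everything reduces to showing $v_{\La}\bigl(G_{\la}(\chi_{\la},f)\bigr)=\fr{2(\ell-1)}{3}$. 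For this I would expand each $\vp{r/\la}^{\pm1}$ as a power series in $\La=\vp{1/\la}$ via Lemma~\ref{addsl} (exactly as in the proof of Lemma~\ref{lala}), so that $G_{\la}(\chi_{\la},f)=\sum_k\bigl(\sum_{r\in\ker\chi_{\la}}P_k(r)\bigr)\La^{k}$ with $P_k$ polynomial and $P_1(r)=r$ (resp.\ the reciprocal expansion), where only exponents $k\equiv 1\bmod 3$ (resp.\ $k\equiv 2\bmod 3$) occur because $\vp{\ro u}=\ro\vp{u}$. Reducing the coefficients modulo $\la$ and using the power-sum identity $\sum_{r\in\ker\chi_{\la}}r^k\equiv 0\bmod\ell$ unless $\fr{\ell-1}{3}\mid k$, the choice of $f$ dictated by $\ell\bmod 9$ is precisely what forces the first surviving exponent to be $k=\fr{2(\ell-1)}{3}$ — the exponent $k=\fr{\ell-1}{3}$ is killed by the residue condition modulo $3$ in both cases. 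This simultaneously yields the lower bound (integrality at $(\la)$, hence $\al\in\Z[\ro]$) and the equality (so $\al$ is a $\la$-adic unit, in particular nonzero).

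Finally I would pin down the coset. To see that $\al\in\chi_{\la}(3)\Z$ rather than merely $\Z[\ro]$, I would show that $\chi_{\la}(3)^{-1}\al$ is fixed by complex conjugation, using the conjugation symmetry of the elliptic Gauss sum together with $\vp{\ol u}=\ol{\vp u}$ (Lemma~\ref{pro}~(5)) and the relation $\ol{\chi_{\la}(r)}=\chi_{\ol{\la}}(\ol r)$; being both real and in $\Q(\ro)$ it lies in $\Q$, and being integral it lies in $\Z$. The residue $\al\equiv\chi_{\la}(3)\bmod 3$ I would extract from a finer analysis of the leading $\La$-adic coefficient modulo the ramified prime $(1-\ro)$ above $3$, where $\chi_{\la}(3)$ enters as the cubic residue symbol of $3$. \emph{The main obstacle I anticipate is this last step:} the descent is formal and the valuation computation is clean once the power-sum mechanism is in place, but controlling the elliptic Gauss sum simultaneously modulo $3$ and under complex conjugation — so as to land in the exact coset $\chi_{\la}(3)(1+3\Z)$ with the sign convention distinguishing $\ell\equiv 7$ from $\ell\equiv 4\bmod 9$ — requires the delicate arithmetic of $\varphi$ at division points carried out in \cite{Asai}.
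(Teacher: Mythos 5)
The paper does not prove this statement at all: it is imported verbatim from Asai (\cite[Definition 1.12 and Theorem 1.19]{Asai}), so there is no internal proof to compare against. Judged on its own merits, your sketch correctly identifies the three components (descent to $\Q(\ro)$, integrality/valuation, coset membership), and the first two are handled plausibly — the Galois-equivariance computation $\sigma_t(G_{\la})=\ol{\chi_{\la}}(t)G_{\la}$, $\sigma_t(\widetilde{\la}^2)=\ol{\chi_{\la}}(t)\widetilde{\la}^2$ is correct, and the $\La$-adic expansion with the power-sum sieve is exactly the mechanism the paper itself uses later (in Lemma~\ref{lala} and the proof of Theorem~\ref{main1}).

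However, there are two genuine gaps. First, your claim that the valuation computation ``simultaneously yields \dots\ the equality (so $\al$ is a $\la$-adic unit, in particular nonzero)'' does not follow: the power-sum identity only shows that all coefficients below exponent $\fr{2(\ell-1)}{3}$ vanish modulo $\la$ and that the coefficient at that exponent is $\equiv -\fr{1}{3}C_{\fr{2(\ell-1)}{3}}$ (resp.\ $-\fr{1}{3}D_{\fr{2(\ell-1)}{3}}$) modulo $\la$. Nothing in the argument shows this quantity is a $\la$-adic unit; that statement is essentially equivalent to $\al\not\equiv 0\bmod\la$, i.e.\ to a consequence of the theorem you are trying to prove, so this route to non-vanishing is circular. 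Second, the only non-circular route to non-vanishing in your plan is the coset membership $\al\in\chi_{\la}(3)(1+3\Z)$ (resp.\ $\ol{\chi_{\la}}(3)(-1+3\Z)$), since $0\notin\pm1+3\Z$ — and this is precisely the step you do not prove. Showing that $\chi_{\la}(3)^{-1}\al$ is real and integral would only place it in $\Z$; the congruence modulo $3$, with the sign depending on $\ell\bmod 9$, is the entire arithmetic content of Asai's Definition 1.12 and Theorem 1.19, and ``a finer analysis of the leading $\La$-adic coefficient modulo $(1-\ro)$'' is a placeholder, not an argument. As it stands the proposal reduces the theorem to its hardest component without establishing it.
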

\begin{remark}
In $\ell \equiv 1\bmod 9$, it may happen $\al=0$ (cf. \cite[Remark of Corollary 1.22 of \S 1 and Table 1]{Asai}).
\end{remark}
The number $\al$ is called the {\it coefficient of elliptic Gauss sum} in \cite{Asai}.

\subsection{The main congruences}
In this section, we show the main congruences and give some examples.

\begin{theorem} (The congruences between the coefficients of elliptic Gauss sums and the Bernoulli-Hurwitz-type numbers)\label{main1} One has
$$\al\equiv \left\{
\begin{array}{ll}
-\fr{1}{3}C_{\fr{2}{3}(\ell-1)}\bmod \la & \text{if\ }\ell \equiv 7 \bmod 9, \\
-\fr{1}{3}D_{\fr{2}{3}(\ell-1)}\bmod \la & \text{if\ }\ell \equiv 4 \bmod 9.
\end{array}
\right.
$$
\end{theorem}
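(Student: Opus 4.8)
The plan is to use $\La=\vp{\fr{1}{\la}}=\Sl\left(\fr{-3\pe}{\la}\right)$ as a uniformizer and to read the congruence off the leading term of a $\La$-adic expansion of the Gauss sum. First I would replace the full sum by a sum over the kernel: by Lemma~\ref{EGS}~\eqref{k}, writing $S:=\ker(\chi_{\la})$ and taking $f=\varphi$ (resp. $f=\varphi^{-1}$), one has $G_{\la}(\chi_{\la},f)=\sum_{r\in S}f\left(\fr{r}{\la}\right)=\sum_{r\in S}\Sl(ru_{0})^{\pm 1}$ with $u_{0}:=\fr{-3\pe}{\la}$, so that $\Sl(u_{0})=\La$. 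Since $(\la)=(\La)^{\ell-1}$ in $\sO_{\la}$ and $\ol{\la}$ is a $\La$-adic unit, I record that $\la$ is totally ramified with $v_{\La}(\la)=v_{\La}(\ell)=\ell-1$, and I carry out all computations in the completion $\sO_{\la,\La}$, whose residue field is $\F_{\ell}$.

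The key computation is to expand each summand $\La$-adically with coefficients polynomial in $r$. Using $\Sl(ru_{0})=\sum_{m}C_{3m+1}(ru_{0})^{3m+1}$ together with the inverse expansion $u_{0}=\sum_{j}(-1)^{j}\binom{-2/3}{j}\fr{\La^{3j+1}}{3j+1}$ from Corollary~\ref{ASL} (and, for $\Sl(ru_{0})^{-1}$, the reciprocal built from $\fr{1}{\Sl}=\fr{1}{u}+\sum_{n}D_{3n+2}u^{3n+2}$), one sees that the coefficient of $\La^{3k+1}$ (resp. the coefficient of $\La^{3k+2}$, together with the polar part $\fr{1}{ru_{0}}$) is a $\Q$-linear combination of the monomials $r^{3m+1}$ with $m\le k$ (resp. of $r^{3n+2}$ and $r^{-1}$), with universal coefficients. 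Setting $M:=\fr{\ell-1}{3}$ and $N:=\fr{2(\ell-1)}{3}$, I then sum over $S$ via the power-sum identity for the cubic-residue subgroup: since $S$ is cyclic of order $M$, $\sum_{r\in S}r^{j}\equiv-\fr{1}{3}\bmod\ell$ when $M\mid j$ and $\equiv 0$ otherwise (and likewise for $r^{-j}$). A crucial bookkeeping point is that in the range $k\le\fr{N-1}{3}$ every index $3m+1,\ 3j+1$ is $<\ell$, so all occurring coefficients are $\ell$-integral and the reduction mod $\ell$ is legitimate.

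The heart of the argument is the mod-$3$ arithmetic that pins $N$ as the first surviving level. When $\ell\equiv 7\bmod 9$ one has $M\equiv 2\bmod 3$ (so $N=2M\equiv 1\bmod 3$, matching the exponents of $\Sl$), and when $\ell\equiv 4\bmod 9$ one has $M\equiv 1\bmod 3$ (so $N=2M\equiv 2\bmod 3$, matching the exponents of $\fr{1}{\Sl}$); in either case the smallest positive multiple of $M$ with the required residue mod $3$ is exactly $2M=N$. Consequently every power sum with exponent $<N$ vanishes mod $\ell$, so the corresponding coefficient has $v_{\La}\ge\ell-1$ and its monomial lies in $\La^{N+1}\sO_{\la,\La}$ because $\ell-1>N$; terms of $\La$-degree $>N$ lie there trivially; and in the $\varphi^{-1}$ case the polar contributions carry the factor $\sum_{r\in S}r^{-1}\equiv 0$. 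Only the top coefficient at level $N$ survives, and it equals $C_{N}$ (resp. $D_{N}$), coming from the leading term of $u_{0}^{N}$, times $\sum_{r\in S}r^{N}\equiv -\fr{1}{3}$. Hence $G_{\la}(\chi_{\la},f)\equiv-\fr{1}{3}C_{N}\,\La^{N}$ (resp. $-\fr{1}{3}D_{N}\,\La^{N}$) modulo $\La^{N+1}$.

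To finish, I would compare with $G_{\la}(\chi_{\la},f)=\al\,\widetilde{\la}^{2}$: by Lemma~\ref{lala}, $\widetilde{\la}^{2}\equiv\La^{N}\bmod\La^{N+1}$, and since $\al\in\Z[\ro]$ is $\La$-integral this gives $G_{\la}(\chi_{\la},f)\equiv\al\,\La^{N}\bmod\La^{N+1}$. Subtracting the two expressions yields $v_{\La}\!\left(\al+\fr{1}{3}C_{N}\right)\ge 1$ (resp. with $D_{N}$); as $\al+\fr{1}{3}C_{N}\in\Q(\ro)$ and $\la$ is totally ramified with $e=\ell-1$, the identity $v_{\La}=(\ell-1)v_{\la}$ forces $v_{\la}\ge 1$, i.e. $\al\equiv-\fr{1}{3}C_{N}\bmod\la$ (resp. $-\fr{1}{3}D_{N}$). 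This descent needs $C_{N}$ (resp. $D_{N}$) to be $\la$-integral, which is precisely the denominator assertion and which I would prove first: since $N<\ell$, the factorial $N!$ is prime to $\ell$, so $C_{N}=c_{N}/N!$ with $c_{N}\in\Z$ by Lemma~\ref{denc} has denominator prime to $\ell$; and $N$ is of the form $6m+2$ with $2d_{N}\in\Z\left[\fr{1}{3}\right]$ by Lemma~\ref{dend}, so $D_{N}=d_{N}/N!$ likewise has denominator prime to $\ell$. I expect the main obstacle to be the valuation bookkeeping of the second and third paragraphs—justifying rigorously that every non-leading contribution is absorbed into $\La^{N+1}\sO_{\la,\La}$ and that the $r$-polynomial coefficients are genuinely $\ell$-integral in the relevant range—rather than the final descent, which is formal.
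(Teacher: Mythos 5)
Your overall architecture is the paper's: reduce to $\sum_{r\in\ker\chi_{\la}}f(r/\la)$ via Lemma~\ref{EGS}, expand $\La$-adically through Corollary~\ref{ASL}, kill everything below level $N=\fr{2}{3}(\ell-1)$ with the power-sum identity for the index-$3$ subgroup of $\F_{\ell}^{\times}$, and descend from $\bmod\,\La$ to $\bmod\,\la$ using Lemma~\ref{lala} together with the $\ell$-integrality of $C_{N}$, $D_{N}$. The $\ell\equiv 7\bmod 9$ case goes through as you describe. The gap is in the $\ell\equiv 4\bmod 9$ case, at exactly the point you flag as ``bookkeeping''. Because you expand $\Sl(ru_{0})^{-1}=\fr{1}{ru_{0}}+\sum_{n}D_{3n+2}(ru_{0})^{3n+2}$ summand by summand over all of $S$, your argument needs every $D_{3n+2}$ with $3n+2\le N$ to be $\ell$-integral: when $M\nmid 3n+2$ the power sum $\sum_{r\in S}r^{3n+2}$ is divisible by $\ell$ only once, so a single factor of $\ell$ in the denominator of $D_{3n+2}$ would leave a contribution that is \emph{not} absorbed into $\La^{N+1}\sO_{\la,\La}$. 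Your justification ``every index is $<\ell$, so all occurring coefficients are $\ell$-integral'' is valid for $C_{3m+1}=c_{3m+1}/(3m+1)!$ (since $c_{n}\in\Z$ by Lemma~\ref{denc}) and for $D_{6m+2}$ (Lemma~\ref{dend}), but not for the coefficients $D_{6m-1}$, which also occur in your expansion and whose numerators $d_{6m-1}$ genuinely carry prime denominators (e.g.\ $d_{11}=48400/273$ has $7$ and $13$ in its denominator). Ruling out $\ell$ there requires the von Staudt--Clausen-type control of Lemma~\ref{den-1} — appendix material resting on Katz's theorem — or at least the observation that $\ell-1\nmid 6m$ in the relevant range; you invoke neither.

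The paper sidesteps this with one device you are missing: since $\chi_{\la}(-1)=1$, the kernel $S$ is stable under $r\mapsto -r$, and pairing $r$ with $-r$ replaces each pair of summands by $\Sl(v)^{-1}+\Sl(-v)^{-1}=2\sum_{m}D_{6m+2}v^{6m+2}$, which simultaneously removes the polar part (so no appeal to $\sum_{r\in S}r^{-1}\equiv 0$ is needed) and eliminates all the $D_{6m-1}$, leaving only coefficients covered by Lemma~\ref{dend}. Either add this pairing, or prove the $\ell$-integrality of $D_{6m-1}$ for $6m-1<N$ separately; as written, the integrality step in the $\ell\equiv 4\bmod 9$ case does not close. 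The rest of the proposal — the mod-$3$ analysis pinning $2M=N$ as the first surviving level, the handling of the pole via $\sum_{r\in S}r^{-1}\equiv 0$, and the final descent through $v_{\La}=(\ell-1)v_{\la}$ — is correct and matches the paper.
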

\begin{proof}
Let $t:=\Sl(u)$. By Corollary~\ref{ASL}, for any $\nu \in \Z[\ro]$, we have
\begin{align} \label{ade}
\Sl(\nu u)&=\sum_{m=0}^{\infty}C_{3m+1}(\nu u)^{3m+1}=\sum_{m=0}^{\infty}C_{3m+1}\nu^{3m+1}\left(\sum_{n=0}^{\infty}(-1)^n\dbinom{-\fr{2}{3}}{n}\fr{t^{3n+1}}{3n+1}\right)^{3m+1}, \\ \label{bde}
\fr{1}{\Sl(\nu u)}+\fr{1}{\Sl(-\nu u)}&=2\sum_{m=0}^{\infty}D_{6m+2}(\nu u)^{6m+2}=2\sum_{m=0}^{\infty}D_{6m+2}\nu^{6m+2}\left(\sum_{n=0}^{\infty}(-1)^n\dbinom{-\fr{2}{3}}{n}\fr{t^{3n+1}}{3n+1}\right)^{6m+2},
\end{align}
as elements of $\Q[[u]]=\Q[[t]]$.

\underline{The case $\ell \equiv 7\bmod 9$.} Let $g\in \Z$ be a primitive root of $1$ modulo $\ell$, i.e., $g \bmod \ell$ generates $(\Z/\ell\Z)^{\times}$. We consider the sum
$$
L_{1}(u):=\Sl(g^3 u)+\Sl(g^6u)+\cdots +\Sl(g^{\ell-1}u).
$$
By Lemma~\ref{EGS} \eqref{k},
$$
G_{\la}(\chi_{\la},\varphi)=\sum_{j=1}^{\fr{\ell-1}{3}}\s{\fr{-3\pe g^{3j}}{\la}}=L_{1}\left(\fr{-3\pe}{\la}\right).
$$
By \eqref{ade},
\begin{align*}
L_{1}(u)=\sum_{m=0}^{\infty}C_{3m+1}\left(\sum_{j=1}^{\fr{\ell-1}{3}}g^{3(3m+1)j}\right)t^{3m+1}\left(\sum_{n=0}^{\infty}(-1)^n\dbinom{-\fr{2}{3}}{n}\fr{t^{3n}}{3n+1}\right)^{3m+1}.
\end{align*}
Since Lemma~\ref{addsl} and the definition of $L_{1}$, the coefficients of this expansion in terms of $t$ are $\in \Z\left[\fr{1}{3}\right]$, especially $\in \Z_{(\ell)}$. Here, we substitute $\La$ for $t$ (which corresponds to the substitution of $\fr{-3\pe}{\la}$ for $u$), then, $\Sl(\nu u)\mapsto \s{\fr{-3\pe\nu }{\la}}$ for any $\nu \in \Z$, since Lemma~\ref{addsl}. Therefore, 
$$
\sum_{m=0}^{\infty}C_{3m+1}\left(\sum_{j=1}^{\fr{\ell-1}{3}}g^{3(3m+1)j}\right)
\La^{3m+1}\left(\sum_{n=0}^{\infty}(-1)^n\dbinom{-\fr{2}{3}}{n}\fr{\La^{3n}}{3n+1}\right)^{3m+1}
$$
converges $\La$-adically in $\sO_{\la, \La}$ and is equal to $G_{\la}(\chi_{\la},\varphi)$. By noting that $(-1)^n\dbinom{-\fr{2}{3}}{n}\fr{1}{3n+1}\in \Z_{(\ell)}$ for $n \leq \fr{2\ell-5}{9}$ (since $\dbinom{-\fr{2}{3}}{n}\in \Z\left[\fr{1}{3}\right]$ and $3n+1\leq \fr{2\ell-2}{3}<\ell$) and $C_{3m+1}\in \Z_{(\ell)}$ for $m \leq \fr{2\ell-5}{9}$ (since $C_{3m+1}=\fr{c_{3m+1}}{(3m+1)!}$,  $3m+1<\ell$, and $c_{3m+1}\in \Z$ (Lemma~\ref{denc})), and that  the coefficients of this expansion in terms of $\La$ are $\in \Z\left[\fr{1}{3}\right]$,
we have
$$
G_{\la}(\chi_{\la},\varphi)\equiv \sum_{m=0}^{\fr{2\ell-5}{9}}C_{3m+1}\left(\sum_{j=1}^{\fr{\ell-1}{3}}g^{3(3m+1)j}\right)\La^{3m+1}\left(\sum_{n=0}^{\fr{2\ell-5}{9}}(-1)^n\dbinom{-\fr{2}{3}}{n}\fr{\La^{3n}}{3n+1}\right)^{3m+1} \bmod \La^{\fr{2(\ell-1)}{3}+1}.
$$
We also have
$$
\sum_{j=1}^{\fr{\ell-1}{3}}g^{3(3m+1)j}\equiv \left\{
\begin{array}{lll}
0 & \bmod \ell & \text{if\ } \ell-1\mathrel{\not|}3(3m+1), \\
\fr{\ell-1}{3} & \bmod \ell & \text{if\ } \ell-1|3(3m+1).
\end{array}
\right.
$$
Since $0\leq m \leq \fr{2\ell-5}{9}$ and $\ell \equiv 7 \bmod 9$, we see that $\ell-1|3(3m+1)$ if and only if $3(3m+1)=2(\ell-1)$. Then, we have
$$
G_{\la}(\chi_{\la},\varphi)\equiv C_{\fr{2(\ell-1)}{3}}\cdot \fr{\ell-1}{3}\cdot \La^{\fr{2(\ell-1)}{3}}=-\fr{1}{3}C_{\fr{2(\ell-1)}{3}}\cdot \La^{\fr{2(\ell-1)}{3}}\bmod \La^{\fr{2(\ell-1)}{3}+1}.
$$
On the other hand, by Theorem~\ref{defco} \eqref{7} and Lemma~\ref{lala}, we have 
$$
G_{\la}(\chi_{\la},\varphi)=\al \widetilde{\la}^2\equiv \al \La^{\fr{2(\ell-1)}{3}}\bmod \La^{\fr{2(\ell-1)}{3}+1}.
$$
Therefore,
$$
\al \equiv -\fr{1}{3}C_{\fr{2(\ell-1)}{3}}\bmod \La
$$
holds. Since $\al \in \chi_{\la}(3)(1+3\Z) \subset \Z[\ro]$, and $C_{\fr{2(\ell-1)}{3}}=c_{\fr{2(\ell-1)}{3}}/(\fr{2(\ell-1)}{3})! \in \Z_{(\ell)}$ by Theorem~\ref{defco} \eqref{7} and Lemma~\ref{denc}, we have $\al \equiv -\fr{1}{3}C_{\fr{2(\ell-1)}{3}}\bmod \la$.

\underline{The case $\ell \equiv 4\bmod 9$.} Let $g\in \Z$ be a primitive root of $1$ modulo $\ell$. We consider the sum
\begin{equation}
L_{2}(u):=\Sl(g^3u)^{-1}+\Sl(g^6 u)^{-1}+\cdots +\Sl(g^{\fr{\ell-1}{2}}u)^{-1}+\Sl(-g^3u)^{-1}+\Sl(-g^6 u)^{-1}+\cdots +\Sl(-g^{\fr{\ell-1}{2}}u)^{-1}.
\end{equation}
By noting that $g^{\fr{\ell-1}{2}}\equiv -1\bmod \ell$, we have $\ker(\chi_{\la})=\{g^{3j}, -g^{3j} \bmod \ell \mid j=1,\ldots, \fr{\ell-1}{6}\}$. By this fact and Lemma~\ref{EGS} \eqref{k}, we have
$$
G_{\la}(\chi_{\la},\varphi^{-1})=\sum_{j=1}^{\fr{\ell-1}{6}}\left(\s{\fr{-3\pe g^{3j}}{\la}}^{-1}+ \s{\fr{3\pe g^{3j}}{\la}}^{-1}\right)=L_{2}\left(\fr{-3\pe}{\la}\right).
$$
By \eqref{bde},
\begin{align*}
L_{2}(u)&=2\sum_{m=0}^{\infty}D_{6m+2}\left(\sum_{j=1}^{\fr{\ell-1}{6}}g^{3(6m+2)j}\right)t^{6m+2}\left(\sum_{n=0}^{\infty}(-1)^n\dbinom{-\fr{2}{3}}{n}\fr{t^{3n}}{3n+1}\right)^{6m+2}.
\end{align*}
Since Lemma~\ref{addsl} and the definition of $L_{2}$, the coefficients of this expansion in terms of $t$ are $\in \Z\left[\fr{1}{g^{3j}},\fr{1}{3}: j=1,\ldots,\fr{\ell-1}{6}\right]$, especially $\in \Z_{(\ell)}$. Here, we substitute $\La$ for $t$ (which corresponds to the substitution of $\fr{-3\pe}{\la}$ for $u$), then, $\Sl(\nu u)\mapsto \s{\fr{-3\pe\nu }{\la}}$ for any $\nu \in \Z$, since Lemma~\ref{addsl}. Therefore, 
$$
2\sum_{m=0}^{\infty}D_{6m+2}\left(\sum_{j=1}^{\fr{\ell-1}{6}}g^{3(6m+2)j}\right)\La^{6m+2}\left(\sum_{n=0}^{\infty}(-1)^n\dbinom{-\fr{2}{3}}{n}\fr{\La^{3n}}{3n+1}\right)^{6m+2}
$$
converges $\La$-adically in $\sO_{\la,\La}$ and is equal to $G_{\la}(\chi_{\la},\varphi^{-1})$. By noting that $(-1)^n\dbinom{-\fr{2}{3}}{n}\fr{1}{3n+1}\in \Z_{(\ell)}$ for $n \leq \fr{2(\ell-4)}{9}$ (since $\dbinom{-\fr{2}{3}}{n}\in \Z\left[\fr{1}{3}\right]$ and $3n+1\leq \fr{2\ell-3}{3}<\ell$) and $D_{6m+2}\in \Z_{(\ell)}$ for $m \leq \fr{\ell-4}{9}$ (since $D_{6m+2}=\fr{d_{6m+2}}{(6m+2)!}$,  $6m+2<\ell$, and $d_{6m+2}\in \Z_{(\ell)}$ (Lemma~\ref{dend})), and that  the coefficients of this expansion in terms of $\La$ are $\in \Z\left[\fr{1}{3}\right]$, we have
$$
G_{\la}(\chi_{\la},\varphi^{-1})\equiv 2\sum_{m=0}^{\fr{\ell-4}{9}}D_{6m+2}\left(\sum_{j=1}^{\fr{\ell-1}{6}}g^{3(6m+2)j}\right)\La^{6m+2}\left(\sum_{n=0}^{\fr{2(\ell-4)}{9}}(-1)^n\dbinom{-\fr{2}{3}}{n}\fr{\La^{3n}}{3n+1}\right)^{6m+2} \bmod \La^{\fr{2(\ell-1)}{3}+1}.
$$
We also have
$$
\sum_{j=1}^{\fr{\ell-1}{6}}g^{3(6m+2)j}\equiv \left\{
\begin{array}{lll}
0 & \bmod \ell & \text{if\ } \ell-1\mathrel{\not|}3(6m+2), \\
\fr{\ell-1}{6} & \bmod \ell & \text{if\ } \ell-1|3(6m+2).
\end{array}
\right.
$$
Since $0\leq m \leq \fr{\ell-4}{9}$ and $\ell \equiv 4 \bmod 9$, we see that $\ell-1|3(6m+2)$ if and only if $3(6m+2)=2(\ell-1)$. Then, we have
$$
G_{\la}(\chi_{\la},\varphi^{-1})\equiv 2D_{\fr{2(\ell-1)}{3}}\cdot \fr{\ell-1}{6}\cdot \La^{\fr{2(\ell-1)}{3}}=-\fr{1}{3}D_{\fr{2(\ell-1)}{3}}\La^{\fr{2(\ell-1)}{3}} \bmod \La^{\fr{2(\ell-1)}{3}+1}.
$$
On the other hand, by Theorem~\ref{defco} \eqref{4} and Lemma~\ref{lala}, we have

$$
G_{\la}(\chi_{\la},\varphi^{-1})=\al \widetilde{\la}^2\equiv \al \La^{\fr{2(\ell-1)}{3}} \bmod \La^{\fr{2(\ell-1)}{3}+1}.
$$
Therefore, 
$$
\al \equiv -\fr{1}{3}D_{\fr{2(\ell-1)}{3}}\bmod \La
$$
holds. 

Since $\al \in \ol{\chi_{\la}}(3)(-1+3\Z) \subset \Z[\ro]$, and $D_{\fr{2(\ell-1)}{3}}=d_{\fr{2(\ell-1)}{3}}/(\fr{2(\ell-1)}{3})! \in \Z_{(\ell)}$ by Theorem~\ref{defco} \eqref{4} and Lemma~\ref{dend}, we have $\al \equiv -\fr{1}{3}D_{\fr{2(\ell-1)}{3}}\bmod \la$.
\end{proof}


\begin{example}
\begin{enumerate}[(1)]
\item Let $\ell=7$ and $\la=1+3\ro$. By Table~\ref{tc}, we have $C_{4}=-\fr{1}{6}$.
Then,
$$
-\fr{1}{3}C_{\fr{2(\ell-1)}{3}}=-\fr{1}{3}C_{4}=\fr{1}{18}\equiv 2 \bmod 7.
$$
By noting that $\Z[\ro]/(\la) \simeq \Z/\ell\Z: \ro \mapsto 2$, this coincides with the value $\al=1\cdot \ro$ in the table at \cite[Table 1. in p.117]{Asai}.

\item $\ell=13$ and $\la=4+3\ro$. By Table~\ref{td}, $D_{8}=-\fr{1}{4536}\equiv 1 \bmod 13$. Then,
$$
-\fr{1}{3}D_{\fr{2(\ell-1)}{3}}=-\fr{1}{3}D_{8}\equiv -\fr{1}{3}\equiv -3^2 \bmod 13.
$$
By noting that $\Z[\ro]/(\la) \simeq \Z/\ell\Z:\ro \mapsto 3$, this coincides with the value $\al=-1\cdot \rob$ in the table at \cite[Table 1. in p.117]{Asai}.
\end{enumerate}
\end{example}

\begin{lemma} (The congruences between the square of the absolute value of the coefficients of the elliptic Gauss sums and the Bernoulli-Hurwitz-type numbers)\label{ab}
We have
$$
|\al|^2\underset{\bmod \la}{\equiv} \left\{
\begin{array}{ll}
\left(\ol{\chi_{\la}(3)}C_{\fr{2(\ell-1)}{3}}\right)^2/9 & \text{if\ } \ell\equiv 7 \bmod 9, \\
\left(\chi_{\la}(3)D_{\fr{2(\ell-1)}{3}}\right)^2/9 & \text{if\ } \ell\equiv 4 \bmod 9.
\end{array}
\right.
$$
We also have
$$
|\al|^2\underset{\bmod \ell}{\equiv} \left\{
\begin{array}{ll}
\left(3^{\fr{\ell-7}{6}}C_{\fr{2(\ell-1)}{3}}\right)^2 & \text{if\ } \ell\equiv 7 \bmod 9, \\
\left(3^{\fr{\ell-4}{3}}D_{\fr{2(\ell-1)}{3}}\right)^2 & \text{if\ } \ell\equiv 4 \bmod 9.
\end{array}
\right.
$$
\end{lemma}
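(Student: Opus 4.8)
The plan is to derive both congruences directly from Theorem~\ref{main1} together with the structural description of $\al$ in Theorem~\ref{defco}, the key point being that $|\al|^2$ is a rational integer. Indeed, in the case $\ell\equiv 7\bmod 9$ Theorem~\ref{defco}~\eqref{7} writes $\al=\chi_{\la}(3)\,a$ with $a\in 1+3\Z$, and in the case $\ell\equiv 4\bmod 9$ Theorem~\ref{defco}~\eqref{4} writes $\al=\ol{\chi_{\la}(3)}\,a$ with $a\in -1+3\Z$; in either case $a$ is a rational integer and $\chi_{\la}(3)$ is a cube root of unity. Since $a$ is fixed by complex conjugation, $\ol{\al}$ is obtained from $\al$ by conjugating only the root-of-unity factor, and therefore $|\al|^2=\al\,\ol{\al}=a^2$ is a rational integer. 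This rationality is precisely what will let me upgrade a congruence modulo $\la$ to one modulo $\ell$ at the end.

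First I would prove the congruences modulo $\la$, abbreviating $C:=C_{\fr{2(\ell-1)}{3}}$ and $D:=D_{\fr{2(\ell-1)}{3}}$. For $\ell\equiv 7\bmod 9$, solving $\al=\chi_{\la}(3)\,a$ for $a$ gives $a=\ol{\chi_{\la}(3)}\,\al$ (as $\chi_{\la}(3)^{-1}=\ol{\chi_{\la}(3)}$ for a root of unity), and Theorem~\ref{main1} yields $a\equiv -\tfrac13\,\ol{\chi_{\la}(3)}\,C\bmod\la$; squaring gives $|\al|^2=a^2\equiv\bigl(\ol{\chi_{\la}(3)}\,C\bigr)^2/9\bmod\la$. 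The case $\ell\equiv 4\bmod 9$ is identical after replacing $\ol{\chi_{\la}(3)}$ by $\chi_{\la}(3)$ and $C$ by $D$, using $a=\chi_{\la}(3)\,\al$. This establishes the first displayed congruence.

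To obtain the second congruence I would replace the character values by powers of $3$ via the defining relation $\chi_{\la}(\nu)\equiv\nu^{(\ell-1)/3}\bmod\la$. For $\ell\equiv 7\bmod 9$, the fact that $\chi_{\la}(3)$ is a cube root of unity gives $\bigl(\ol{\chi_{\la}(3)}\bigr)^2=\chi_{\la}(3)\equiv 3^{(\ell-1)/3}\bmod\la$, so that $\bigl(\ol{\chi_{\la}(3)}\,C\bigr)^2/9\equiv 3^{(\ell-1)/3-2}C^2=3^{(\ell-7)/3}C^2=\bigl(3^{(\ell-7)/6}C\bigr)^2\bmod\la$, where I note that $\ell\equiv 1\bmod 6$ makes $(\ell-7)/6$ an integer. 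For $\ell\equiv 4\bmod 9$ the analogous computation uses $\chi_{\la}(3)^2\equiv 3^{2(\ell-1)/3}\bmod\la$ to give $\bigl(\chi_{\la}(3)\,D\bigr)^2/9\equiv 3^{2(\ell-4)/3}D^2=\bigl(3^{(\ell-4)/3}D\bigr)^2\bmod\la$, with $(\ell-4)/3\in\Z$ since $\ell\equiv 1\bmod 3$.

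It remains to descend these congruences from $\la$ to $\ell$. After the substitution, both sides are now \emph{rational} numbers lying in $\Z_{(\ell)}$: the coefficient $C$ (resp.\ $D$) has denominator prime to $\ell$ by Theorem~\ref{main1}, and $3$ is a unit modulo $\ell$. The canonical map $\Z_{(\ell)}\to\Z[\ro]/(\la)$ factors through $\Z_{(\ell)}/\ell\Z_{(\ell)}=\F_{\ell}\xrightarrow{\ \sim\ }\Z[\ro]/(\la)$, so a congruence modulo $\la$ between elements of $\Z_{(\ell)}$ forces the corresponding congruence modulo $\ell$ (equivalently, a rational number divisible by $\la$ is divisible by $\ol{\la}$ and hence by $\la\ol{\la}=\ell$). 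Applying this yields the second displayed congruence. The main obstacle is the bookkeeping of this last step: one must check that after inserting $3^{(\ell-1)/3}$ for $\chi_{\la}(3)$ the surviving character factor is an \emph{even} power of $\chi_{\la}(3)$, equivalently a genuine power of $3$, so that the right-hand side really has become rational and the descent from $\la$ to $\ell$ is legitimate; the exponent manipulations themselves are then routine.
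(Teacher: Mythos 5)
Your proposal is correct and follows essentially the same route as the paper: use Theorem~\ref{defco} to write $\al$ as a cube root of unity times a rational integer $a$ (so $|\al|^2=a^2\in\Z$), combine with Theorem~\ref{main1} to get the congruence modulo $\la$, replace $\chi_{\la}(3)$ by $3^{(\ell-1)/3}$ via the defining congruence of the cubic residue character, and descend from $\la$ to $\ell$ using that both sides are then rational and $\ell$-integral. The only cosmetic difference is the order of operations (you multiply by the conjugate character value before squaring, the paper squares first), which changes nothing.
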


\begin{proof}
Let $\ell \equiv 7\bmod 9$.
We have
$\al^2 \equiv \left(C_{\fr{2(\ell-1)}{3}}\right)^2/9 \bmod \la$ by Theorem~\ref{main1}. We can take $a_{\la}\in 1+3\Z$ as 
$\al=\chi_{\la}(3)a_{\la}$ by Theorem~\ref{defco} \eqref{7}. Then, $\al^2=\ol{\chi_{\la}(3)}a_{\la}^2$, and $|\al|^2=a_{\la}^2$ hold. Therefore, we have
$$
|\al|^2=a_{\la}^2=\chi_{\la}(3)\al^2\equiv \left(\ol{\chi_{\la}(3)}C_{\fr{2(\ell-1)}{3}}\right)^2/9 \bmod \la.
$$

Since $\ol{\chi_{\la}(3)}^2=\chi_{\la}(3)\equiv 3^{\fr{\ell-1}{3}}=9\cdot (3^{\fr{\ell-7}{6}})^2 \bmod \la$ and $|\al|^2, \left(3^{\fr{\ell-7}{6}}C_{\fr{2(\ell-1)}{3}}\right)^2 \in \Q$, the second congruence in the case where $\ell \equiv 7\bmod 9$ is proved. 

The case where $\ell \equiv 4\bmod 9$ is proved similarly to the case where $\ell \equiv 7\bmod 9$.
\end{proof}
\begin{remark}
It seems that the statement of \cite[Corollary 2.16]{O}, which is the counterpart of Lemma~\ref{ab} for Gaussian integers case, is a typo of
$$
|\al|^2 \underset{\bmod \ell}{\equiv}  
\begin{cases}
\left(\fr{1}{4}C_{\fr{3(\ell-1)}{4}}\right)^2 & \ell\equiv 13 \bmod 16, \\
-\left(\fr{1}{4}D_{\fr{3(\ell-1)}{4}}\right)^2 & \ell\equiv 5 \bmod 16.
\end{cases}
$$
\end{remark}

\section{Elliptic Gauss sums and associated elliptic curves}
In this chapter, we study certain Hecke characters, Hecke $L$-series associated to them, certain elliptic curves $\E_{\la}$ for $\la$, and their $L$-functions relevant to our elliptic Gauss sums.

In \S 3.1, we define appropriate Hecke characters and the Hecke $L$-series associated to them. In \S 3.2, we introduce certain elliptic curves $\E_{\la}$ for $\la$ and show that their $L$-functions correspond to the Hecke $L$-series defined in \S 3.1. In \S 3.3, by using the coefficient of the elliptic Gauss sum, we show that the order of the Tate-Shafarevich group of $\E_{\la}/\Q(\ro)$ is congruent to the Bernoulli-Hurwitz-type number modulo $\ell$ under the BSD conjecture on the $2, 3$-parts of the leading term. We also obtain unconditional results.  

Let $\ell, \la ,\widetilde{\la}, \chi_{\la}, W$, and $W'$ be as in Chapter~\ref{Ef}.

\subsection{Hecke $L$-series}
In this section, we introduce appropriate Hecke characters and the Hecke $L$-series associated to them.

We define, due to \cite[\S 1.4.1]{Asai},
$$\widetilde{\chi_{\la}}((\nu)):=\chi_{1}(\nu)\ol{\nu},\ \ \chi_{1}:=\left\{
\begin{array}{ll}
\chi_{\la}\cdot\ol{\chi_{0}} & \text{for\ } \ell=\la\ol{\la}\equiv 7 \bmod 9,\\
\chi_{\la}\cdot\chi_{0}' & \text{for\ } \ell=\la\ol{\la}\equiv 4 \bmod 9,
\end{array}
\right.
$$
where $\chi_{0}:(\Z[\ro]/(3))^{\times}\stackrel{\sim}{\longrightarrow} W$ and 
 $\chi_{0}':(\Z[\ro]/(\sqrt{-3}))^{\times}\stackrel{\sim}{\longrightarrow} \{\pm1\}$ are natural isomorphisms.
Note that $\widetilde{\chi_{\la}}((\nu))$ is independent of choice of the generator $\nu$ of the ideal $(\nu)$.
Hecke $L$-series associated to the Hecke character $\widetilde{\chi_{\la}}$ is
\begin{align}
\begin{split} \label{defl} 
L(s,\widetilde{\chi_{\la}})&=\prod_{(\mu):\ \text{prime ideal}}(1-\widetilde{\chi_{\la}}((\mu))(\mu\overline{\mu})^{-s})^{-1}=\prod_{\substack{\mu:\ \text{prime} \\ \mu \equiv 1 \bmod 3\\ \text{or} \ \mu=1-\ro}}(1-\widetilde{\chi_{\la}}((\mu))(\mu\overline{\mu})^{-s})^{-1}\\
&=\prod_{\substack{\mu:\ \text{primary prime} \\ \mu \neq \la, 1-\ro}}(1-\chi_{\la}(\mu)\ol{\mu}(\mu\overline{\mu})^{-s})^{-1}
\end{split}
\end{align}
for $\ell \equiv 7,4 \bmod 9$, since $\chi_{\la}(\la)=0, \chi_{0}(1-\ro)=\chi_{0}'(1-\ro)=0$.

\subsection{$L$-functions of elliptic curves}
In this section, we introduce certain elliptic curves for $\la$, and show the properties of them such as their reduction types at prime ideals of $\Z[\ro]$, the numbers of the points of the reductions of the elliptic curves.
We also show that the Hecke $L$-series defined in \S 3.1 correspond to the $L$-functions of the elliptic curves defined in this section.

First, we recall the properties of Jacobi sums and the cubic reciprocity law which will be used to count points of elliptic curve over a finite field.
\begin{lemma} \label{Ja}
\begin{enumerate}[(1)]
\item (Jacobi sums over $\F_{p}$)\label{pp}
For any primary prime $\mu\in \Z[\ro]$ such that $p=\mu\ol{\mu}\equiv 1\bmod 3$ is a rational prime number, and any characters $\phi, \chi$ of $\F_{p}^{\times}$, we define the {\it Jacobi sum} associated to $\phi, \chi$ to be
$$
J(\phi,\chi):=\sum_{t\in \F_{p}}\phi(t)\chi(1-t).
$$
If $\phi$ is the quadratic residue character, $\chi$ is the cubic residue character, then 
$$
J(\phi, \chi)=\chi(4)J(\chi,\chi)=-\chi(4)\mu
$$
holds.
\item (Jacobi sums over $\F_{q^2}$)\label{qq}
For any primary prime $-q \in \Z[\ro]$ such that $q$ is a rational prime number (automatically satisfying $q \equiv 2\bmod 3$), and any characters $\phi, \chi$ of $\F_{q^2}^{\times}$, we define
$$
J_{2}(\phi,\chi):=\sum_{t\in \F_{q^2}}\phi(t)\chi(1-t).
$$
If $\phi$ has order 2 and $\chi$ has order 3, then 
$$
J_{2}(\phi, \chi)=q
$$
holds.

\item (Cubic Reciprocity Law) \label{rec}
If $\la_{1}, \la_{2}$ are primary primes in $\Z[\ro]$, then $$
\left(\fr{\la_{1}}{\la_{2}}\right)_{3}=\left(\fr{\la_{2}}{\la_{1}}\right)_{3}.
$$
\end{enumerate}
\end{lemma}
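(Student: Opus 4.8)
The three assertions are all classical facts about Gauss and Jacobi sums attached to the cubic character, so the plan is to reduce each to the standard machinery (as in \cite{IR} and \cite{Le}), taking care that this paper's notion of \emph{primary} ($a\equiv 1\bmod 3$, Definition~\ref{pri}) is the negative of the convention ($a\equiv -1\bmod 3$) used there, which flips certain signs.

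For \eqref{pp} I would work with the Gauss sum $g(\chi):=\sum_{t\in\F_p}\chi(t)\zeta_p^{t}$ and the two identities $g(\chi)g(\psi)=J(\chi,\psi)g(\chi\psi)$ (valid whenever $\chi,\psi,\chi\psi$ are all nontrivial) and $g(\chi)g(\ol{\chi})=\chi(-1)p$. The first equality $J(\phi,\chi)=\chi(4)J(\chi,\chi)$ then follows from the Hasse--Davenport product relation for the quadratic character $\phi$, namely $g(\chi)g(\phi\chi)=\ol{\chi}(4)\,g(\phi)\,g(\chi^2)$: substituting into $J(\phi,\chi)=g(\phi)g(\chi)/g(\phi\chi)$ gives $J(\phi,\chi)=g(\chi)^2/(\ol{\chi}(4)g(\chi^2))=\chi(4)J(\chi,\chi)$, using $\chi^2=\ol{\chi}$ and $\ol{\chi}(4)^{-1}=\chi(4)$. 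For the second equality I would invoke the evaluation $J(\chi,\chi)=\pi$ for the cubic residue character modulo a prime $\pi$ that is primary in the sense of \cite{IR}; since our $\mu\equiv 1\bmod 3$ makes $-\mu$ primary in that sense and $\chi_{-\mu}=\chi_{\mu}$, this yields $J(\chi,\chi)=-\mu$, hence $J(\phi,\chi)=-\chi(4)\mu$. Statement \eqref{rec} is the cubic reciprocity law, which I would simply cite from \cite[Chap.~9]{IR} (or \cite{Le}); its proof rests on the Galois behaviour of the cubic Gauss sum and is orthogonal to the rest of the paper.

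The genuinely non-standard piece is \eqref{qq}, the evaluation over $\F_{q^2}$ for an inert prime $q\equiv 2\bmod 3$. The key preliminary observation is that both $\phi$ and $\chi$ are \emph{trivial} on $\F_q^{\times}\subset\F_{q^2}^{\times}$: indeed $|\F_q^{\times}|=q-1$ is coprime to $3$, so the order-$3$ character $\chi$ kills it, and every element of $\F_q^{\times}$ is a square in $\F_{q^2}^{\times}$ (as $x^{(q^2-1)/2}=(x^{q-1})^{(q+1)/2}=1$), so $\phi$ kills it as well. I would then apply the Frobenius substitution $t\mapsto t^{q}$, a bijection of $\F_{q^2}$ fixing $\F_q$ pointwise: since $(1-t)^{q}=1-t^{q}$, $\phi(t^{q})=\phi(t)^{q}=\phi(t)$ (as $q$ is odd) and $\chi(1-t^{q})=\chi(1-t)^{q}=\ol{\chi}(1-t)$ (as $q\equiv 2\bmod 3$), this shows $J_{2}(\phi,\chi)=J_{2}(\phi,\ol{\chi})=\ol{J_{2}(\phi,\chi)}$. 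Hence $J_{2}(\phi,\chi)$ is a real element of $\Z[\ro]$, i.e.\ a rational integer, and since $\phi,\chi,\phi\chi$ are all nontrivial characters of $\F_{q^2}^{\times}$ its absolute value is $\sqrt{q^2}=q$, so $J_{2}(\phi,\chi)=\pm q$. To fix the sign I would reduce modulo the prime $(1-\ro)$ above $3$: there $\chi(1-t)\equiv 1$, so $J_{2}(\phi,\chi)\equiv\sum_{t\neq 0,1}\phi(t)=-1\pmod{(1-\ro)}$, forcing $J_{2}(\phi,\chi)\equiv 2\bmod 3$; as $q\equiv 2$ and $-q\equiv 1\bmod 3$, this selects $J_{2}(\phi,\chi)=q$.

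The main obstacle is bookkeeping rather than conceptual: keeping the sign conventions consistent between this paper's primary elements and those of \cite{IR} in \eqref{pp}, and correctly pinning down $+q$ versus $-q$ in \eqref{qq}. The rationality argument for \eqref{qq} via the Frobenius substitution is the one place where a small idea is needed, but once $J_{2}(\phi,\chi)\in\Z$ is established the congruence modulo $(1-\ro)$ settles everything.
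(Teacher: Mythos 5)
Your proposal is correct, but it is considerably more self-contained than what the paper actually does: the paper's proof of this lemma consists entirely of citations --- part (1) to the Lemma in \S 3 of Chap.~18 of \cite{IR} for the identity $J(\phi,\chi)=\chi(4)J(\chi,\chi)$ and to Lemma 1 in \S 4 of Chap.~9 of \cite{IR} for $J(\chi,\chi)=-\mu$ (together with exactly the remark you make, that $-\mu$ is the primary element in Ireland--Rosen's convention), part (2) to Theorems 2.3 and 2.14 of \cite{BE}, and part (3) to Theorem 7.8 of \cite{Le}. Your derivation of the first identity in (1) from the Hasse--Davenport product relation is a legitimate alternative to Ireland--Rosen's elementary change-of-variables argument, and your sign bookkeeping via $\chi_{-\mu}=\chi_{\mu}$ reproduces the paper's parenthetical caveat about the two conventions for primarity. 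The genuine added value is in (2): the Frobenius substitution $t\mapsto t^{q}$ showing $J_{2}(\phi,\chi)=J_2(\phi,\ol{\chi})=\ol{J_{2}(\phi,\chi)}\in\Z$, the absolute-value computation giving $\pm q$, and the congruence $J_{2}(\phi,\chi)\equiv\sum_{t\neq 0,1}\phi(t)=-1\bmod (1-\ro)$ pinning down the sign together replace the black-box citation to \cite{BE} with a short complete argument. One cosmetic point: your ``key preliminary observation'' that $\phi$ and $\chi$ are trivial on $\F_{q}^{\times}$ is never actually used --- the Frobenius computation only needs $\phi(t)^{q}=\phi(t)$ for odd $q$ and $\chi(\cdot)^{q}=\ol{\chi}(\cdot)$ for $q\equiv 2\bmod 3$ --- so it can be dropped (and note that the case $q=2$ is vacuous, since $\F_{4}^{\times}$ admits no character of order $2$, consistent with the paper treating $q=2$ separately in Lemma~\ref{E}).
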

\begin{proof}
\eqref{pp} The first equality follows from \cite[Lemma in \S 3 of Chap. 18]{IR} and the second equality follows from \cite[Lemma 1 in \S 4 of Chap. 9]{IR}. (Note that $-\mu$ is primary  in the definition of \cite[\S 3 of Chap. 9]{IR}.)

\eqref{qq} See Theorem 2.3 and Theorem 2.14 in \cite{BE}.

\eqref{rec} See \cite[Theorem 7.8]{Le}. (Note that, the primarity in Definition~\ref{pri} implies the primarity in \cite[\S 7.1]{Le}.)
\end{proof}

Let
$$
\E_{\la}:y^2=x^3+\fr{\la^2}{4}
$$
be an elliptic curve over $\Q(\ro)$, which is a twist of the elliptic curve $y^2=4x^3-27$. This has an automorphism $(x,y)\mapsto (\ro x,-y)$, and we see that 
$$
{\rm End}(\E_{\la})\simeq \Z[\ro].
$$

\begin{lemma} \label{E}
For any prime ideal $\p \subset \Z[\ro]$, 
let $R_{\p}$ be the completion of $\Z[\ro]$ by $\p$, $K_{\p}$ the fraction field of $R_{\p}$, and $k_{\p}(=\Z[\ro]/\p)$ the residue field of $K_{\p}$.
\begin{enumerate}[(1)]
\item (Reductions) The elliptic curve $\E_{\la}$ has bad reductions only at $(\la), (1-\ro)$, moreover, $\E_{\la}$ has additive reductions at $(\la), (1-\ro)$. \label{red}
\item (Special Fibers) The special fiber of $\E_{\la}$ at $(\la)$ is of Type IV, and the one at $(1-\ro)$ is of Type IV if $\ell \equiv 7\bmod 9$, of Type  {\rm I}$_{0}^{*}$ if $\ell \equiv 4\bmod 9$, in the notation of Kodaira symbols in the Kodaira-N\'eron classification of the special fibers of N\'eron models. \label{type}
\item (Local Tamagawa Numbers)
For any prime ideal $\p \subset \Z[\ro]$, let $\mathcal{E}/R_{\p}$ be the N\'eron model of $\E_{\la}\times_{\Q(\ro)}K_{\p}$, 
$\widetilde{\mathcal{E}}/k_{\p}=\mathcal{E}\times_{R_{\p}}k_{\p}$ the special fiber of $\mathcal{E}$, 
$\widetilde{\mathcal{E}^{0}}/k_{\p}$ the identity component of the group variety $\widetilde{\mathcal{E}}/k_{\p}$, 
and $\tau_{\p}$ the order of the group of components $\widetilde{\mathcal{E}}(k_{\p})/\widetilde{\mathcal{E}^{0}}(k_{\p})$. Then, we have 
$$
\tau_{(\la)}=3,\ \tau_{(1-\ro)}=\left\{
\begin{array}{ll}
4 & \text{if}\  n\equiv 0\bmod 3, \\
1 & \text{if otherwise,}
\end{array}
\right.
$$
where $\la=3m+1+3n\ro, m,n \in \Z$. \label{type1}
\item (Point Counting at Good Ordinary Reductions) For a primary prime $\mu \in \Z[\ro]$ such that $p=\mu\ol{\mu}$ is a rational prime number and $\mu \neq \la$, we have
$$
\#(\E_{\la}\otimes \F_{p})(\F_{p})=p+1-(\chi_{\la}(\mu)\ol{\mu}+\ol{\chi_{\la}(\mu)}\mu),
$$ 
where $\E_{\la}\otimes \F_{p}$ is the special fiber of the minimal Weierstrass model of $\E_{\la}$ over $R_{(\mu)}$.
\label{op}
\item (Point Counting at Good Supersingular Reductions) For a rational prime number $q\equiv 2 \bmod3$, we have
$$
\#(\E_{\la}\otimes \F_{q^2})(\F_{q^2})=q^2+1+q(\chi_{\la}(q)+\ol{\chi_{\la}(q)}),
$$ 
where $\E_{\la}\otimes \F_{q^2}$ is the minimal Weierstrass model of $\E_{\la}$ over $R_{(-q)}$.
\label{oq}
\item (Global Rational Torsion Points) 
$$
\E_{\la}(\Q(\ro))_{\text{tors}}=\E_{\la}(\Q(\ro))[3]=\left\{\infty, \left(0,\pm\fr{\la}{2}\right)\right\},
$$
thus $\#\E_{\la}(\Q(\ro))_{\text{tors}}=3.$ Here,  
$\E_{\la}(\Q(\ro))[m], \E_{\la}(\Q(\ro))_{\text{tors}}$ are the $m$-torsion subgroup and the torsion subgroup of $\E_{\la}(\Q(\ro))$, respectively.
 \label{rat}
\end{enumerate}
\end{lemma}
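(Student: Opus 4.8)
The plan is to reduce all six assertions to the Weierstrass data of $\E_{\la}:y^2=x^3+\fr{\la^2}{4}$, whose discriminant is $\Delta=-27\la^4=-3^3\la^4$ and whose $j$-invariant is $0$. Since $j=0\in\Z[\ro]$ is integral, $\E_{\la}$ has potentially good reduction everywhere, so at every prime of bad reduction the reduction is additive; and the only primes dividing $\Delta$ are $(\la)$ (from $\la^4$) and $(1-\ro)$ (from $(3)=(1-\ro)^2$, whence $v_{(1-\ro)}(27)=6$). This yields \eqref{red}: additive reduction exactly at $(\la)$ and $(1-\ro)$.

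For the Kodaira types in \eqref{type} I would run Tate's algorithm at each bad prime. At $(\la)$ the residue characteristic is $\ell\geq 7$, so the coarse (characteristic $\neq 2,3$) form applies: here $v_{(\la)}(\fr{\la^2}{4})=2$ and $v_{(\la)}(\Delta)=4$, which forces Type IV. The delicate case — and the main obstacle — is $(1-\ro)$, where the residue field is $\F_3$ and the completion is ramified over $\Q_3$, so the usual shortcuts fail. Here one first moves the singular point of the reduction to the origin; since $\fr{\la^2}{4}\equiv 1\bmod 3$, the singular $x$-coordinate is $\equiv -1$, and translating by $x_0=-1$ turns $b_6$ into $\la^2-4=(\la-2)(\la+2)$. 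As $\la\equiv 1\bmod 3$ one has $v_{(1-\ro)}(\la-2)=0$ and $\la+2=3(m+1+n\ro)$, so that
$$
v_{(1-\ro)}(b_6)=2+v_{(1-\ro)}(m+1+n\ro).
$$
Thus $v_{(1-\ro)}(b_6)=2$, giving Type IV, exactly when $m+1+n\not\equiv 0\bmod 3$, and $v_{(1-\ro)}(b_6)\geq 3$, pushing Tate's algorithm on to Type {\rm I}$_{0}^{*}$, otherwise. Converting $m+n\bmod 3$ into the congruence $\ell=N(3m+1+3n\ro)\equiv 1+3(2m-n)\bmod 9$ matches these two cases to $\ell\equiv 7\bmod 9$ and $\ell\equiv 4\bmod 9$ respectively, which is \eqref{type}.

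With the types in hand, \eqref{type1} is a matter of reading off the component groups and the Frobenius action on them. At $(\la)$ the Type IV fiber has component group $\Z/3$ on which Frobenius acts trivially, giving $\tau_{(\la)}=3$; at $(1-\ro)$ one obtains $\tau_{(1-\ro)}$ from the splitting over $\F_3$ of the auxiliary quadratic produced by Tate's algorithm, and I would track how this splitting depends on $n\bmod 3$ to get the stated dichotomy between $4$ and $1$. For the point counts \eqref{op} and \eqref{oq} I would invoke the classical evaluation of $\#\{y^2=x^3+B\}$ over a finite field via Jacobi sums: at a good ordinary (split) prime $\mu$ the trace of Frobenius is a Jacobi sum which Lemma~\ref{Ja}\eqref{pp} identifies with $-\chi(4)\mu$, and cubic reciprocity (Lemma~\ref{Ja}\eqref{rec}) rewrites the cubic character modulo $\mu$ in terms of $\chi_{\la}(\mu)$, so that the trace becomes $\chi_{\la}(\mu)\ol{\mu}+\ol{\chi_{\la}(\mu)}\mu$; at a supersingular (inert) prime $q$ the count over $\F_{q^2}$ uses $J_{2}(\phi,\chi)=q$ from Lemma~\ref{Ja}\eqref{qq}, yielding $q^2+1+q(\chi_{\la}(q)+\ol{\chi_{\la}(q)})$.

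Finally, for \eqref{rat} I would first check directly that $\left(0,\pm\fr{\la}{2}\right)$ lie on $\E_{\la}$ and are $3$-torsion, since the roots of the $3$-division polynomial $3x(x^3+\la^2)$ include $x=0$; this gives an embedding $\Z/3\hookrightarrow \E_{\la}(\Q(\ro))_{\text{tors}}$. To exclude further torsion I would reduce modulo a good split prime, use injectivity of reduction on the prime-to-residue-characteristic torsion, and compare $\#(\E_{\la}\otimes\F_p)(\F_p)$ from \eqref{op} for a couple of such primes: their greatest common divisor will have $3$-part exactly $3$ and no other common factor, forcing $\#\E_{\la}(\Q(\ro))_{\text{tors}}=3$. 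The one genuinely hard part throughout is the residue-characteristic-$3$ analysis at $(1-\ro)$ underlying \eqref{type} and \eqref{type1}, where Tate's algorithm must be executed by hand and its output matched precisely to the congruence classes of $\ell$ modulo $9$ and of $n$ modulo $3$.
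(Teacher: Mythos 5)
Your overall route coincides with the paper's: an integral model and potential good reduction for \eqref{red}, Tate's algorithm at $(\la)$ and $(1-\ro)$ with the translation that turns $b_6$ into $(\la-2)(\la+2)$ for \eqref{type}--\eqref{type1}, Jacobi sums plus cubic reciprocity for \eqref{op}--\eqref{oq}, and division polynomials plus reduction for \eqref{rat}. Two steps, however, would fail as written. First, in \eqref{oq} the statement includes $q=2$, but your Jacobi-sum evaluation requires a multiplicative character of exact order $2$ on $\F_{q^2}^{\times}$, and $\F_{4}^{\times}$ has order $3$, so no such character exists; moreover the model $y^2=x^3+\fr{\la^2}{4}$ is not integral at $(2)$ (here $2$ is inert and $(4)=(2)^2$). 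The paper treats $q=2$ separately by passing to $y^2+\la y=x^3$ and counting points over $\F_{4}$ by hand, and some such separate argument is unavoidable. (Exhibiting that integral model is also what lets you read the set of bad primes off $\Delta=-27\la^4$ in \eqref{red}.)

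Second, in \eqref{rat} your claim that the gcd of a couple of good-reduction point counts ``will have $3$-part exactly $3$ and no other common factor'' cannot be guaranteed: the counts over $\F_{25}$ and $\F_{121}$ are $36$ or $21$, respectively $144$ or $111$, depending on $\chi_{\la}(5)$ and $\chi_{\la}(11)$, and in the unfavourable case the gcd is $36$, which excludes neither $2$-power torsion nor order-$9$ points. You must supplement the reduction argument with the direct checks the paper makes: $\E_{\la}(\Q(\ro))[2]=\{\infty\}$ because $x^3=-\fr{\la^2}{4}$ has no root in $\Q(\ro)$, and the explicit determination $\E_{\la}(\Q(\ro))[3]=\{\infty,(0,\pm\fr{\la}{2})\}$ via the $3$-division polynomial (which you do have). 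A smaller point: in the Type ${\rm I}_0^*$ case at $(1-\ro)$ the component group is read off a \emph{cubic} $P(T)=T^3+a_{2,1}T^2+a_{4,2}T+a_{6,3}\equiv T^3-T-n$ over $\F_3$, not an ``auxiliary quadratic''; the dichotomy $\tau_{(1-\ro)}=4$ versus $1$ is precisely $1+\#\{\alpha\in\F_3 \mid P(\alpha)=0\}$, which equals $4$ exactly when $n\equiv 0\bmod 3$.
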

\begin{remark} On \eqref{rat}, we can show, moreover, $\E_{\la}(\Q(\ro))=\E_{\la}(\Q(\ro))_{\text{tors}}$ as follows:
As we will see later, the $L$-function of $\E_{\la}$ does not vanish at $s=1$. This fact follows from Lemma~\ref{L1}, which is proved without Lemma~\ref{E} \eqref{rat}. From \cite[Theorem 1]{CW}, the rank of $\E_{\la}(\Q(\ro))$ is zero. Thus, $\E_{\la}(\Q(\ro))=\E_{\la}(\Q(\ro))_{\text{tors}}$.
\end{remark}

\begin{proof}
\eqref{red}
The substitution $y\mapsto y+\fr{\la}{2}$ to $y^2=x^3+\fr{\la^2}{4}$ gives an equation of the form
$$
\E_{\la}:y^2+\la y=x^3.
$$
This equation has the discriminant $\Delta=-27\la^4$ and $c_{4}=0$, where $c_{4}$ is the one defined in \cite[Chap. III. 1]{Sil'}. Thus $(\Delta)=(1-\ro)^6(\la)^4$ as ideals in $\Z[\ro]$. This is a global minimal Weierstrass equation of $\E_{\la}/\Q(\ro)$, since \cite[Chap. VII, Remark 1.1]{Sil'}. (Note that Weierstrass equation $y^2=x^3+\fr{\la^2}{4}$ is minimal at every prime except $(2)$.)
Hence, $\E_{\la}$ has only two bad primes $(\la), (1-\ro)$, and it has additive reductions at these primes, since \cite[Chap. VII, Proposition 5.1(c)]{Sil'} 
(The fact that $\E_{\la}$ has additive reductions at $(\la), (1-\ro)$ can be deduced in the following way as well: Since $\E_{\la}$ has complex multiplication, the $j$-invariant of $\E_{\la}$ is integral by \cite[Chap. II, Theorem 6.1]{Sil}. Since \cite[Chap. VII, Proposition 5.5]{Sil'}, $\E_{\la}$ has potential good reduction at every primes. By \cite[Chap. VII, Proposition 5.4 (b)]{Sil'}, $\E_{\la}$ does not have multiplicative reductions.).

\eqref{type}-\eqref{type1} In the following, we use the symbols (e.g., $a_{1}, b_{2}, a_{3,1}$ etc.) and the terminology (e.g., Step1 etc.) of  \cite[Chap. IV, Tate's Algorithm 9.4]{Sil}. Note that $a_{1}=0, a_{3}=\la, a_{2}=a_{4}=a_{6}=0, b_{2}=b_{4}=0, b_{6}=\la^2, b_{8}=0$. First, we determine the reduction type at $(\la)$ and $\tau_{(\la)}$. Note that $k_{(\la)}\simeq \F_{\ell}$, and choose $\la$ as a uniformizer. 

Step1: Since $\la|\Delta$, go to Step2.

Step2: Since $\la|a_{3},a_{4},a_{6}$ and $b_{2}$, go to Step3.

Step3: Since $\la^2|a_{6}$, go to Step4.

Step4: Since $\la^3|b_{8}$, go to Step5.

Step5: Since $\la^3\mathrel{\not|} b_{6}=\la^2$, $\E_{\la}$ is of Type IV at $(\la)$.
Let $k'$ be the splitting field over $k_{(\la)}$ of $T^2+a_{3,1}T-a_{6,2}=T^2+T=0$.
Then, $k'=k_{(\la)}$. Thus, we have $\tau_{(\la)}=c=3$.

Next, we determine the reduction type at $(1-\ro)$ and $\tau_{(1-\ro)}$. Note that $k_{(1-\ro)}\simeq\F_{3}$, and choose $1-\ro$ as a uniformizer.

Step1: Since $(1-\ro)|\Delta$, go to Step2.

Step2: $\E_{\la}\otimes k_{(1-\ro)}/k_{(1-\ro)}:y^2+y=x^3$, since $\la\equiv 1\mod 3$, especially, $\la\equiv 1\bmod (1-\ro)$. Here, $(x,y)=(-1,1)$ is the singular point. To make a change of variables to move the singular point to $(0,0)$, we substitute $(x,y)\mapsto (x-1,y+\la)$. Then, $\E_{\la}:y^2+3\la y=x^3-3x^2+3x-1-2\la^2$. The new $a_{i}$'s and $b_{i}$'s are:
$$
a_{1}=0,\ a_{3}=3\la,\ a_{2}=-3,\ a_{4}=3,\ a_{6}=-1-2\la^2,
$$
$$
b_{2}=-12,\ b_{4}=6,\ b_{6}=\la^2-4,\ b_{8}=-3(\la^2-1).
$$
Thus, $(1-\ro)|a_{3},a_{4},a_{6}$ and $b_{2}$, go to Step3.

Step3: Since $a_{6}=-1-2\la^2\equiv -1-2\equiv 0\bmod 3$, $(3)=(1-\ro)^2|a_{6}$, go to Step4.

Step4: Since $\la^2-1\equiv 0\bmod 3$, $(3)^2|b_{8}$. Especially $(1-\ro)^3|b_{8}$, go to Step5.

Step5:  We write $\la=a+b\ro$, $a,b \in \Z$. Since $\la$ is primary, we can write $a=3m+1, b=3n$, $n,m \in \Z$. Then, $\la+2=3(m+1)+3n\ro=3(m+1+n\ro)$, and $\ell= \la\ol{\la}=a^2-ab +b^2=(3m+1)^2-(3m+1)3n+9n^2\equiv 6m+1-3n \bmod 9$. We note that, for $\alpha+\beta\ro \in \Z[\ro]$,
\begin{equation} \label{mod}
(1-\ro)|(\alpha+\beta \ro) \Longleftrightarrow \alpha+\beta \equiv 0\bmod 3.
\end{equation}
Note also that $a_{6}=-1-2\la^2=-1-2(3m+1+3n\ro)^2\equiv -3(1+m+n\ro) \bmod 9$.

\underline{We assume that $\ell \equiv 7 \bmod 9$}. We will show that $(1-\ro)^3\mathrel{\not|} b_{6}$. Note that $b_{6}=(\la+2)(\la-2)$. Since $\la-2\equiv -1\bmod 3$, $(1-\ro)\mathrel{\not|}(\la-2)$. 
Thus, it  suffices to show that $(1-\ro)^3\mathrel{\not|} \la+2$. Therefore, it suffices to show that $(1-\ro)\mathrel{\not|} m+1+n\ro$. Now, $7\equiv \ell\equiv 6m+1-3n \bmod 9$, thus, $-3\equiv -3(m+n) \bmod 9$. Hence, we have $1\equiv m+n \bmod 3$.
Since $m+n+1\equiv 2\bmod 3$ and \eqref{mod}, we have $(1-\ro)\mathrel{\not|} m+1+n\ro$. Therefore, we proved $(1-\ro)^3\mathrel{\not|} b_{6}$.
Thus, $\E_{\la}$ is of Type IV at $(1-\ro)$. We have $a_{6,2}=\fr{a_{6}}{(1-\ro)^2}\equiv \rob(1+m+n\ro)\equiv 1+m+n \equiv -1\bmod (1-\ro)$ (note that $(1-\ro)^2=-3\ro$).
Let $k'$ be the splitting field over $k_{(1-\ro)}$ of $T^2+a_{3,1}T-a_{6,2}=T^2+1\in k_{(1-\ro)}[T]$. 
Then, $k'\neq k_{(1-\ro)}$. Thus, we have $\tau_{(1-\ro)}=c=1$.

\underline{We assume that $\ell \equiv 4 \bmod 9$}. Then, $4\equiv \ell \equiv 6m+1-3n \bmod9$, we have $m+n\equiv -1\bmod 3$. Thus, $(1-\ro)|m+1+n\ro$ by \eqref{mod}. Hence, $(1-\ro)^3|3(m+1+n\ro)=\la+2$. Since $(1-\ro)^3|(\la+2)(\la-2)=b_{6}$, go to Step6.

Step6: Already, $(1-\ro)|a_{1}$, $a_{2}$; $(1-\ro)^2|a_{3}$, $a_{4}$; and $(1-\ro)^3|a_{6}$ are satified. Since $a_{6,3}=\fr{a_{6}}{(1-\ro)^3}\equiv \fr{1+m+n\ro}{\ro(1-\ro)}=\ro^2\fr{1+m+n}{1-\ro}-\ro^2n\equiv -n \bmod 1-\ro$. We consider $P(T)=T^3+a_{2,1}T^2+a_{4,2}T+a_{6,3}=T^3-T-n \in k_{(1-\ro)}[T]$. Since $P'(T)=-1$, the polynomial $P(T)$ has distinct roots in an algebraic closure of $k_{(1-\ro)}$. Thus, $\E_{\la}$ is of Type {\rm I}$_{0}^{*}$ at $(1-\ro)$.
We have
$$
\tau_{(1-\ro)}=c=1+\#\{\alpha \in k_{(1-\ro)}\mid P(\alpha)=0\}=\left\{
\begin{array}{ll}
4 & \text{if}\  n\equiv 0\bmod 3, \\
1 & \text{if otherwise,}
\end{array}
\right.
$$
since $t^3-t=0$ for any $t\in k_{(1-\ro)}\simeq \F_{3}$.

\eqref{op} $y^2=x^3+\fr{\la^2}{4}$ is the minimal Weierstrass equation at the prime ideal $(\mu)$.
Let $\phi: \F_{p}^{\times}\rightarrow \{\pm1\}$ be the quadratic residue symbol, and $\chi: \F_{p}^{\times}\rightarrow W'$ be the cubic residue symbol. Let $D_1\in \F_{p}$ be the image of $\fr{\la^2}{4}$ in $\Z[\ro]/(\mu)\simeq \F_{p}$. Note that $D_1\neq 0, \phi(D_1)=1, \chi(-1)=1$. Then, we have
\begin{align*}
\#(\E_{\la}\otimes \F_{p})(\F_{p})&=1+\#\{(x,y) \in  \F_{p}\times \F_{p} \mid y^2=x^3+D_1\}=1+\sum_{\substack{u,v\in \F_{p} \\ u+v=D_1}}\#\{y\in \F_{p}\mid y^2=u\}\cdot\#\{x\in \F_{p}\mid x^3=-v\}\\
&=1+\sum_{\substack{u,v\in \F_{p} \\ u+v=D_1}}(1+\phi(u))(1+\chi(-v)+\ol{\chi(-v)})=p+1+\sum_{\substack{u,v\in \F_{p} \\ u+v=D_1}}\phi(u)(\chi(-v)+\ol{\chi(-v)})\\
&=p+1+\phi(D_1)\chi(D_1)\sum_{\substack{u,v\in \F_{p} \\ u+v=1}}\phi(u)\chi(v)+\phi(D_1)\ol{\chi(D_1)}\ol{\sum_{\substack{u,v\in \F_{p} \\ u+v=1}}\phi(u)\chi(v)}\\
&=p+1+\chi(D_1)J(\phi,\chi)+\ol{\chi(D_1)}\ol{J(\phi,\chi)}.
\end{align*}
By Lemma~\ref{Ja} \eqref{pp}, we have $\#(\E_{\la}\otimes \F_{p})(\F_{p})=p+1-(\chi(D_1)\chi(4)\mu+\ol{\chi(D_1)\chi(4)}\ol{\mu})$. By noting that $\left(\fr{\cdot}{\mu}\right)_{3}:(\Z[\ro]/(\mu))^{\times}\simeq \F_{p}^{\times} \stackrel{\chi}{\rightarrow} W'$, 
and $\left(\fr{\la}{\mu}\right)_{3}=\left(\fr{\mu}{\la}\right)_{3}=\chi_{\la}(\mu)$ (Lemma~\ref{Ja} \eqref{rec}), we have $\chi(4D_1)=\left(\fr{\la}{\mu}\right)_{3}^2=\ol{\left(\fr{\la}{\mu}\right)_{3}}=\ol{\chi_{\la}(\mu)}$. Thus, $\#(\E_{\la}\otimes \F_{p})(\F_{p})=p+1-(\ol{\chi_{\la}(\mu)}\mu+\chi_{\la}(\mu)\ol{\mu})$ holds.

\eqref{oq}
For $q\neq 2$, $y^2=x^3+\fr{\la^2}{4}$ is the minimal Weierstrass equation at the prime ideal $(-q)$.
Let $\phi: \F_{q^2}^{\times}\rightarrow \{\pm1\}$ be the quadratic residue symbol, and $\chi: \F_{q^2}^{\times}\rightarrow W'$ be the cubic residue symbol. Let $D_2\in \F_{q^2}$ be the image of $\fr{\la^2}{4}$ in $\Z[\ro]/(-q)\simeq \F_{q^2}$. Note that $D_2\neq 0, \phi(D_2)=1, \chi(-1)=1$. Then, we have
\begin{align*}
\#(\E_{\la}\otimes \F_{q^2})(\F_{q^2})&=1+\#\{(x,y) \in  \F_{q^2}\times \F_{q^2} \mid y^2=x^3+D_2\}=1+\sum_{\substack{u,v\in \F_{q^2} \\ u+v=D_2}}\#\{y\in \F_{q^2}\mid y^2=u\}\cdot\#\{x\in \F_{q^2}\mid x^3=-v\}\\
&=1+\sum_{\substack{u,v\in \F_{q^2} \\ u+v=D_2}}(1+\phi(u))(1+\chi(-v)+\ol{\chi(-v)})=q^2+1+\sum_{\substack{u,v\in \F_{q^2} \\ u+v=D_2}}\phi(u)(\chi(-v)+\ol{\chi(-v)})\\
&=q^2+1+\phi(D_2)\chi(D_2)\sum_{\substack{u,v\in \F_{q^2} \\ u+v=1}}\phi(u)\chi(v)+\phi(D_2)\ol{\chi(D_2)}\ol{\sum_{\substack{u,v\in \F_{q^2} \\ u+v=1}}\phi(u)\chi(v)}\\
&=q^2+1+\chi(D_2)J_{2}(\phi,\chi)+\ol{\chi(D_2)}\ol{J_{2}(\phi,\chi)}.
\end{align*}
By Lemma~\ref{Ja} \eqref{qq}, we have $\#(\E_{\la}\otimes \F_{q^2})(\F_{q^2})=q^2+1+q(\chi(D_2)+\ol{\chi(D_2)})$.
By noting that $\left(\fr{\cdot}{-q}\right)_{3}:(\Z[\ro]/(-q))^{\times}\simeq \F_{q^2}^{\times} \stackrel{\chi}{\rightarrow} W'$, $\left(\fr{-2}{-q}\right)_{3}=\left(\fr{-q}{-2}\right)_{3}=1$, and $\left(\fr{\la}{-q}\right)_{3}=\left(\fr{-q}{\la}\right)_{3}=\chi_{\la}(-q)=\chi_{\la}(q)$ (Lemma~\ref{Ja} \eqref{rec}), we have $\chi(D_2)=\left(\fr{\la}{-q}\right)_{3}^2=\ol{\left(\fr{\la}{-q}\right)_{3}}=\ol{\chi_{\la}(q)}$. Thus, $\#(\E_{\la}\otimes \F_{q^2})(\F_{q^2})=q^2+1+q(\chi_{\la}(q)+\ol{\chi_{\la}(q)})$ holds.

Let $q=2$ and $D_3\in \F_{4}$ be the image of $\la$ in $\Z[\ro]/(-2)\simeq \F_{4}$.
Note that $D_3\neq 0$ and $y^2+\la y=x^3$ is the minimal Weierstrass equation at the prime ideal $(-2)$.
For $y\in \F_{4}$, we show that $y^2+D_3y=1$ if and only if $(D_3+1)y=0$, $y\neq 0,1$. Indeed, for $y\in \F_{4}$ with $ y^2+D_3y=1$, we have $y\neq 0, 1$, since $D_3\neq 0$.
Thus, $y^2=y+1$ holds, since $0=y^3-1=(y-1)(y^2+y+1)$ and $y\neq 1$.
Thus, we have $(D_3+1)y=0$ from $y^2+D_3y=1$. 
For $(D_3+1)y=0, y\neq 0,1$, since $y^2=y+1$, we have $y^2+D_3y=y+1+D_3y=1$.
Therefore, 
\begin{align*}
\#\{(x,y)\in  \F_{4}^{\times}\times \F_{4} \mid y^2+D_3y=1\}&=3\#\{y\in \F_{4} \mid y^2+D_3y=1\}=3\#\{y\in \F_{4} \mid (D_3+1)y=0, y\neq 0,1 \}\\
&=\left\{
\begin{array}{ll}
6 & \text{if}\ D_3=1,\\
0 & \text{if}\ D_3\neq 1
\end{array}
\right.
\end{align*}
holds.
Then, by noting that $\#\{y\in \F_{4} \mid y^2+D_3y=0\}=\#\{0,-D_3\}=2$, we have 
\begin{align*}
\#(\E_{\la}\otimes \F_{4})(\F_{4})&=1+\#\{(x,y)\in  \F_{4}\times \F_{4} \mid y^2+D_3y=x^3\}\\
&=1+\#\{(x,y)\in  \F_{4}^{\times}\times \F_{4} \mid y^2+D_3y=1\}+\#\{y\in \F_{4} \mid y^2+D_3y=0\}\\
&=3+\#\{(x,y)\in  \F_{4}^{\times}\times \F_{4} \mid y^2+D_3y=1\}=\left\{
\begin{array}{ll}
9 & \text{if}\ D_3=1,\\
3 & \text{if}\ D_3\neq 1.
\end{array}
\right.
\end{align*}
On the other hand, by Lemma~\ref{Ja} \eqref{rec}, we have
$$
\chi_{\la}(2)=\chi_{\la}(-2)=\left(\fr{\la}{-2}\right)_{3}=\left\{
\begin{array}{ll}
1 & \text{if}\ \la\equiv 1 \bmod 2,\\
\ro \ \text{or}\ \rob & \text{if}\ \la \equiv \ro \ \text{or}\ \rob \bmod 2.
\end{array}
\right.
$$
By noting that $\ro+\rob=-1$, 
$$
q^2+1+q(\chi_{\la}(q)+\ol{\chi_{\la}(q)})=5+2(\chi_{\la}(2)+\ol{\chi_{\la}(2)})=\left\{
\begin{array}{ll}
9 & \text{if}\ \la\equiv 1 \bmod 2,\\
3 & \text{if}\ \la \equiv \ro \ \text{or}\ \rob \bmod 2
\end{array}
\right.
$$
holds. Since $\la \equiv 1 \bmod 2 \Longleftrightarrow D_3=1$, we have $\#(\E_{\la}\otimes \F_{q^2})(\F_{q^2})=q^2+1+q(\chi_{\la}(q)+\ol{\chi_{\la}(q)})$ for $q=2$.

\eqref{rat}
If $\infty \neq (x,y)\in \E_{\la}(\Q(\ro))[2]$, then, $y=0$. Then, $x^3=-\fr{\la^2}{4}$ and $x\in \Q(\ro)$. This is a contradiction. Thus, we have $\E_{\la}(\Q(\ro))[2]=\{\infty\}$. 

Let $m\geq 1$ be an integer that is relatively prime to $5$. 
Then, $\E_{\la}(\Q(\ro))[m]$ has injective homomorphisms to $\Z/m\Z\times \Z/m\Z$ and $(\E_{\la}\otimes \F_{25})(\F_{25})$ by \cite[Chap. VII, Proposition3.1 (b)]{Sil'}.
On the other hand, $\#(\E_{\la}\otimes \F_{25})(\F_{25})=26+5(\chi_{\la}(5)+\ol{\chi_{\la}(5)})=36,$ or $21$ by \eqref{oq}.
Thus, for $m$ relatively prime to $2,3,5,7$, we have $\#\E_{\la}(\Q(\ro))[m]=1$.
We also have that $\E_{\la}(\Q(\ro))[m]$ has injective homomorphisms to $\Z/m\Z\times \Z/m\Z$ and $(\E_{\la}\otimes \F_{121})(\F_{121})$ for $m$ relatively prime to $11$ by loc. cit.
On the other hand, $\#(\E_{\la}\otimes \F_{121})(\F_{121})=122+11(\chi_{\la}(11)+\ol{\chi_{\la}(11)})=144,$ or $111$ by \eqref{oq}. 
Thus, for $m$ relatively prime to $2,3,11,37$, we have $\#\E_{\la}(\Q(\ro))[m]=1$.

Therefore, we have $\E_{\la}(\Q(\ro))_{\text{tors}}=\E_{\la}(\Q(\ro))[3]$.
Let $\infty \neq P=(x,y)\in \E_{\la}(\Q(\ro))[3]$. If $y=0$, we have $P\in \E_{\la}(\Q(\ro))[3]\cap \E_{\la}(\Q(\ro))[2]=\{\infty\}$. This is a contradiction. Thus, we have $y\neq 0$, equivalently, $4x^3+\la^2\neq 0$. For $Q\in \E_{\la}$, we denote  the $x$-coodinate of $Q$ by $x(Q)$. 
Then, $x(2P)=\fr{x^4-2\la^2x}{4x^3+\la^2}$, $x(-P)=x$.
Since $2P=-P$, we have $\fr{x^4-2\la^2x}{4x^3+\la^2}=x$. Thus, $x=0$ or $x^3=-\la^2$. Since $x \in \Q(\ro)$, we have $x^3\neq -\la^2$. Therefore, $x=0$. By $P\in \E_{\la}:y^2=x^3+\fr{\la^2}{4}$, we have $y=\pm \fr{\la}{2}$.
Thus, we proved $\E_{\la}(\Q(\ro))_{\text{tors}}=\E_{\la}(\Q(\ro))[3]=\{\infty, (0, \pm \fr{\la}{2})\}$.
\end{proof}

\begin{proposition} (Correspondence between the Hecke $L$-series and the $L$-function of the elliptic curve)\label{Deu}
Let $L(\E_{\la}/\Q(\ro),s)$ be the $L$-function of the elliptic curve $\E_{\la}:y^2=x^3+\fr{\la^2}{4}$ over $\Q(\ro)$ (see \cite[\S 10 of Chap. II]{Sil}).
The Hecke $L$-series defined by \eqref{defl} corresponds to the $L$-function of the elliptic curve $\E_{\la}$, i.e., one has
$$
L(s,\widetilde{\chi_{\la}})L(s,\ol{\widetilde{\chi_{\la}}})=L(\E_{\la}/\Q(\ro),s).
$$
\end{proposition}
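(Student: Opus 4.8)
The plan is to prove the identity by comparing the two sides factor-by-factor as Euler products over the prime ideals of $\Z[\ro]$. Both $L(s,\widetilde{\chi_{\la}})L(s,\ol{\widetilde{\chi_{\la}}})$ and $L(\E_{\la}/\Q(\ro),s)$ are, by definition, convergent Euler products (in a suitable right half-plane) indexed by the primes $\mathfrak{p}$ of $\Z[\ro]$, so it suffices to match the local factor at each $\mathfrak{p}$ and invoke uniqueness of the Euler factorization. I would organize the primes into four classes: the additive (bad) primes $(\la)$ and $(1-\ro)$; the split primes $(\mu),(\ol{\mu})$ over rational primes $p\equiv 1\bmod 3$ (including the good prime $(\ol{\la})$ over $\ell$); and the inert primes $(q)$ over rational primes $q\equiv 2\bmod 3$. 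The arithmetic inputs are the point counts of Lemma~\ref{E}~\eqref{op} and \eqref{oq} (which themselves rest on the Jacobi-sum evaluations of Lemma~\ref{Ja} and cubic reciprocity) together with the reduction data of Lemma~\ref{E}~\eqref{red}--\eqref{type}.

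First I would dispose of the bad primes. By Lemma~\ref{E}~\eqref{red} the curve $\E_{\la}$ has additive reduction at $(\la)$ and $(1-\ro)$, so the local factor of $L(\E_{\la}/\Q(\ro),s)$ at each of these is $1$. On the Hecke side the factors at $(\la)$ and $(1-\ro)$ are likewise trivial, since $\chi_{\la}(\la)=0$ and $\chi_{0}(1-\ro)=\chi_{0}'(1-\ro)=0$ force $\widetilde{\chi_{\la}}((\la))=\widetilde{\chi_{\la}}((1-\ro))=0$ (and similarly for $\ol{\widetilde{\chi_{\la}}}$), exactly as reflected in the product expression \eqref{defl}. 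Hence both sides contribute $1$ at the bad primes.

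Next I would treat the good split primes, where the key observation is that $\widetilde{\chi_{\la}}((\mu))=\chi_{\la}(\mu)\ol{\mu}$ on every primary generator $\mu$: indeed $\mu\equiv 1\bmod 3$ gives $\chi_{0}(\mu)=\chi_{0}'(\mu)=1$, so $\chi_{1}(\mu)=\chi_{\la}(\mu)$. For a rational prime $p\equiv 1\bmod 3$ with $p\neq\ell$, both primary primes $(\mu),(\ol{\mu})$ occur in each Hecke product, and jointly they contribute the denominator
$$
(1-\chi_{\la}(\mu)\ol{\mu}\,p^{-s})(1-\ol{\chi_{\la}(\mu)}\mu\,p^{-s})(1-\chi_{\la}(\ol{\mu})\mu\,p^{-s})(1-\ol{\chi_{\la}(\ol{\mu})}\ol{\mu}\,p^{-s}).
$$
By Lemma~\ref{E}~\eqref{op}, $a_{(\mu)}=\chi_{\la}(\mu)\ol{\mu}+\ol{\chi_{\la}(\mu)}\mu$ and likewise for $\ol{\mu}$; since $|\chi_{\la}(\mu)|=1$ the two purported Frobenius eigenvalues $\chi_{\la}(\mu)\ol{\mu}$ and $\ol{\chi_{\la}(\mu)}\mu$ multiply to $\mu\ol{\mu}=p$, so the good local factors $(1-a_{(\mu)}p^{-s}+p^{1-2s})^{-1}$ and $(1-a_{(\ol{\mu})}p^{-s}+p^{1-2s})^{-1}$ of $L(\E_{\la}/\Q(\ro),s)$ at $(\mu),(\ol{\mu})$ have denominators factoring into exactly these four binomials. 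The prime $\ell$ is the shorter analogue: $(\la)$ is additive and already handled, while the good prime $(\ol{\la})$ contributes the single pair $(1-\chi_{\la}(\ol{\la})\la\,\ell^{-s})(1-\ol{\chi_{\la}(\ol{\la})}\ol{\la}\,\ell^{-s})$ on the Hecke side, matching the denominator of the good factor of $L(\E_{\la}/\Q(\ro),s)$ at $(\ol{\la})$ obtained from Lemma~\ref{E}~\eqref{op} with $\mu=\ol{\la}$.

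Finally, for an inert prime $(q)$ with $q\equiv 2\bmod 3$ and norm $q^{2}$, the primary generator $-q$ is rational, so $\widetilde{\chi_{\la}}((q))=\chi_{\la}(-q)(-q)=-q\,\chi_{\la}(q)$; the Hecke product then contributes $(1+q\chi_{\la}(q)q^{-2s})^{-1}(1+q\ol{\chi_{\la}(q)}q^{-2s})^{-1}$, whose denominator expands to $1+q(\chi_{\la}(q)+\ol{\chi_{\la}(q)})q^{-2s}+q^{2-4s}$. By Lemma~\ref{E}~\eqref{oq} the supersingular local factor of $L(\E_{\la}/\Q(\ro),s)$ at $(q)$ is $(1-a_{(q)}q^{-2s}+q^{2-4s})^{-1}$ with $a_{(q)}=-q(\chi_{\la}(q)+\ol{\chi_{\la}(q)})$, giving the same polynomial, so the inert factors agree and the comparison is complete. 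There is no serious analytic obstacle, as everything is routine once Lemma~\ref{E} is in hand; the points demanding genuine care are the reduction of $\chi_{1}$ to $\chi_{\la}$ on primary generators (which is what aligns the Hecke characters with the unramified Euler factors), the correct treatment of the two primes above $\ell$ — remembering that $(\ol{\la})$ is good and contributes nontrivially while $(\la)$ is additive and contributes $1$ — and keeping the sign at the inert primes straight.
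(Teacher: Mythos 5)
Your proposal is correct and follows essentially the same route as the paper: both compare the Euler products local factor by local factor, using the reduction data and the Jacobi-sum point counts of Lemma~\ref{E} \eqref{red}, \eqref{op}, \eqref{oq} to factor the good quadratic local factors into the two Hecke linear factors and to make both sides trivial at the additive primes $(\la)$ and $(1-\ro)$. The only cosmetic difference is that you group the split primes by the rational prime $p$ (matching four linear factors at once) where the paper works one prime ideal $(\mu)$ at a time.
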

\begin{proof}
We recall the definition of $L(\E_{\la}/\Q(\ro),s)$: 
$L(\E_{\la}/\Q(\ro),s):=\prod_{\p: \text{prime ideal}}L_{\p}(\E_{\la}/\Q(\ro),q_{\p}^{-s})^{-1}$, where $L_{\p}(\E_{\la}/\Q(\ro),T)$ is the local $L$-factor of $\E_{\la}/\Q(\ro)$ at $\p$ defined as follows: Let
$q_{\p}:=\#k_{\p}$, $a_{\p}:=q_{\p}+1-\#(\E_{\la}\otimes k_{\p})(k_{\p})$, if $\E_{\la}$ has good reduction at $\p$. Then, 
$$
L_{\p}(\E_{\la}/\Q(\ro),T):=1-a_{\p}T+q_{\p}T^2.
$$
If $\E_{\la}$ has bad reduction at $\p$, we define
$$
L_{\p}(\E_{\la}/\Q(\ro),T):=\left\{
\begin{array}{ll}
1-T &  \text{if $\E_{\la}$ has split multiplicative reduction at $\p$,}\\
1+T &  \text{if $\E_{\la}$ has non-split multiplicative reduction at $\p$,}\\
1&  \text{if $\E_{\la}$ has additive reduction at $\p$.}
\end{array}
\right.
$$
If $\p=(\mu)$, where $\mu \in \Z[\ro]$ is a primary prime $\neq \la$ such that $p=\mu\ol{\mu}$ is a rational prime, then
$k_{\p}\simeq \F_{p},\ q_{\p}=p,\ a_{\p}=\chi_{\la}(\mu)\ol{\mu}+\ol{\chi_{\la}}(\mu)\mu$ hold by Lemma~\ref{E} \eqref{op}. Thus, 
$$
L_{\p}(\E_{\la}/\Q(\ro),q_{\p}^{-s})=1-(\chi_{\la}(\mu)\ol{\mu}+\ol{\chi_{\la}}(\mu)\mu)p^{-s}+p^{1-2s}.
$$
On the other hand, the local $L$-factor of $L(s,\widetilde{\chi_{\la}})L(s,\ol{\widetilde{\chi_{\la}}})$ at $\p$ is 
$$
(1-\chi_{\la}(\mu)\ol{\mu}(\mu\ol{\mu})^{-s})(1-\ol{\chi_{\la}}(\mu)\mu(\mu\ol{\mu})^{-s})
=1-(\chi_{\la}(\mu)\ol{\mu}+\ol{\chi_{\la}}(\mu)\mu)p^{-s}+p^{1-2s}
$$
by \eqref{defl}.
Therefore, the local $L$-factor of $L(s,\widetilde{\chi_{\la}})L(s,\ol{\widetilde{\chi_{\la}}})$ at $\p$ coinsides with the one of $L(\E_{\la}/\Q(\ro),s)$ at $\p$.

Next, if $\p=(-q)$, where $q$ is a rational prime such that $q\equiv 2\bmod 3$, then $k_{\p}\simeq \F_{q^2},\ q_{\p}=q^2,\ a_{\p}=-q(\chi_{\la}(q)+\ol{\chi_{\la}(q)})$ hold by Lemma~\ref{E} \eqref{oq}. Thus,  
$$
L_{\p}(\E_{\la}/\Q(\ro),q_{\p}^{-s})=1+(\chi_{\la}(q)+\ol{\chi_{\la}}(q))q^{1-2s}+q^{2-4s}.
$$
On the other hand, the local $L$-factor of $L(s,\widetilde{\chi_{\la}})L(s,\ol{\widetilde{\chi_{\la}}})$ at $\p$ is 
$$
(1-\chi_{\la}(-q)(\ol{-q})q^{-2s})(1-\ol{\chi_{\la}}(-q)(-q) q^{-2s})=1+(\chi_{\la}(q)+\ol{\chi_{\la}}(q))q^{1-2s}+q^{2-4s}
$$
by \eqref{defl}.
Therefore, the local $L$-factors of $L(s,\widetilde{\chi_{\la}})L(s, \ol{\widetilde{\chi_{\la}}})$ at $\p$ coinsides with the one of $L(\E_{\la}/\Q(\ro),s)$ at $\p$.

If $\p=(\la), (1-\ro)$, then, $\E_{\la}$ has additive reduction at $\p$ by Lemma~\ref{E} \eqref{red}. 
Then, both of the local $L$-factors of $L(s,\widetilde{\chi_{\la}})L(s, \ol{\widetilde{\chi_{\la}}})$ and $L(\E_{\la}/\Q(\ro),s)$ are equal to 1.
\end{proof}

\subsection{Elliptic Gauss sums and Tate-Shafarevich groups}
In this section, we show that the order of the Tate-Shafarevich group of the elliptic curve $\E_{\la}/\Q(\ro)$ is equal to the square of the absolute value of the coefficient $\al$ of the relevant elliptic Gauss sum or its quarter under the BSD conjecture on the $2, 3$-parts of the leading term for $\E_{\la}/\Q(\ro)$.
By combining this and Theorem~\ref{main1}, we obtain that the order of the Tate-Shafarevich group of $\E_{\la}/\Q(\ro)$ is congruent to the square of Bernoulli-Hurwitz-type numbers $C_{n}, D_{n}$ times $1/9$ or $1/36$ modulo $\ell$ under the BSD conjecture on the $2, 3$-parts of the leading term.
Unconditionally, we have such congruences up to the multiplications of powers of $2, 3$.

\begin{lemma} (Hecke $L$-value and elliptic Gauss sum)\label{L1}
\begin{equation*}
L(1,\widetilde{\chi_{\la}})=\left\{
\begin{array}{ll}
\displaystyle -\fr{\pe\chi_{\la}(3)G_{\la}(\chi_{\la},\varphi)}{\la}=-\fr{\pe\chi_{\la}(3)}{\widetilde{\la}}\al & (\ell \equiv 7 \bmod 9),\\
\displaystyle -\fr{\pe\overline{\chi_{\la}}(3)G_{\la}(\chi_{\la},\varphi^{-1})}{\la}=-\fr{\pe\overline{\chi_{\la}}(3)}{\widetilde{\la}}\al & (\ell \equiv 4 \bmod 9).
\end{array}
\right.
\end{equation*}
Moreover, we have $L(1, \widetilde{\chi_{\la}})\neq 0$ in these cases.
\end{lemma}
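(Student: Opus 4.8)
The plan is to read off the first displayed equality from Asai's evaluation of the Hecke $L$-value at $s=1$ (\cite[Theorem 1.16]{Asai}), to deduce the second equality by a purely formal substitution of the defining relation of $\al$, and then to obtain the non-vanishing as an immediate consequence.

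For the first equality I would proceed as follows. Starting from the Euler product \eqref{defl}, one passes to the Dirichlet series $L(s,\widetilde{\chi_{\la}})=\sum_{(\mu)}\widetilde{\chi_{\la}}((\mu))(\mu\ol{\mu})^{-s}$ and uses $\widetilde{\chi_{\la}}((\mu))=\chi_{1}(\mu)\ol{\mu}$ together with $\ol{\mu}/(\mu\ol{\mu})=\mu^{-1}$ to rewrite, after Eisenstein summation, the value at $s=1$ as the conditionally convergent sum $\sum_{\mu}\chi_{1}(\mu)\mu^{-1}$ over primary $\mu$. Partitioning this sum according to the residue $\nu\in(\Z[\ro]/(\la))^{\times}$ of $\mu$ modulo $\la$ and pulling out $\chi_{\la}(\nu)$, the inner Eisenstein--Kronecker sums $\sum_{\mu\equiv\nu}\mu^{-1}$ are exactly the quantities Asai identifies with the division values $\vp{\nu/\la}$ up to the real period $\pe$ and the modulus $\la$; this is the content of the Eisenstein-summed formula underlying \cite[Theorem 1.16]{Asai}. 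The auxiliary factor $\ol{\chi_{0}}$ (resp. $\chi_{0}'$) in $\chi_{1}=\chi_{\la}\ol{\chi_{0}}$ (resp. $\chi_{1}=\chi_{\la}\chi_{0}'$) contributes, after this reorganization, precisely the constant $\chi_{\la}(3)$ (resp. $\ol{\chi_{\la}}(3)$), and collecting the surviving factors yields
$$
L(1,\widetilde{\chi_{\la}})=-\fr{\pe\,\chi_{\la}(3)}{\la}\cdot\fr{1}{3}\sum_{\nu\in(\Z[\ro]/(\la))^{\times}}\chi_{\la}(\nu)\vp{\fr{\nu}{\la}}=-\fr{\pe\,\chi_{\la}(3)}{\la}\,G_{\la}(\chi_{\la},\varphi)
$$
in the case $\ell\equiv 7\bmod 9$, and the analogous identity with $\varphi^{-1}$ and $\ol{\chi_{\la}}(3)$ when $\ell\equiv 4\bmod 9$.

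For the second equality I would simply substitute Theorem~\ref{defco}: in the case $\ell\equiv 7\bmod 9$ we have $G_{\la}(\chi_{\la},\varphi)=\al\widetilde{\la}^{2}$ by Theorem~\ref{defco}\eqref{7}, and since $\widetilde{\la}^{3}=\la$ by Lemma~\ref{Epro}\eqref{E2}, we get $G_{\la}(\chi_{\la},\varphi)/\la=\al\widetilde{\la}^{2}/\widetilde{\la}^{3}=\al/\widetilde{\la}$; the case $\ell\equiv 4\bmod 9$ is identical with $\varphi^{-1}$. Finally, the non-vanishing $L(1,\widetilde{\chi_{\la}})\neq 0$ follows because $\pe$, $\chi_{\la}(3)$, and $\widetilde{\la}$ are all nonzero while $\al\neq 0$ in both cases by the last assertion of Theorem~\ref{defco} (equivalently, one may invoke \cite[Corollary 1.22]{Asai} directly).

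I expect the main obstacle to be the bookkeeping in the first equality: verifying that Asai's normalization matches the precise constant claimed here, in particular tracking the complex conjugation through $\ol{\mu}/(\mu\ol{\mu})=\mu^{-1}$, confirming that the auxiliary characters $\chi_{0},\chi_{0}'$ collapse to exactly the factor $\chi_{\la}(3)$ (resp. $\ol{\chi_{\la}}(3)$), and pinning down the overall sign together with the single power of the real period $\pe$. Everything else is formal.
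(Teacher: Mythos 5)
Your proposal is correct and takes essentially the same route as the paper, whose entire proof is to cite \cite[Theorem 1.16]{Asai} for the first equality and Theorem~\ref{defco} for the second equality and the non-vanishing. The extra sketch of the Eisenstein-summation argument behind Asai's theorem is additional exposition the paper does not attempt, but the logical structure is identical.
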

\begin{proof}
The lemma follows from \cite[Theorem 1.16 of \S 1.4]{Asai} and Theorem~\ref{defco}.
\end{proof}

\begin{theorem} (The Tate-Shafarevich groups and the coefficients of elliptic Gauss sums)\label{bsd}
For the number $\al$ defined in Theorem~\ref{defco}, one has
\begin{equation*}
\#\Sha(\E_{\la}/\Q(\ro))=
\left\{
\begin{array}{ll}
|\al|^2/4 & \text{if\ $\ell \equiv 4 \bmod 9$ and $n \equiv 0\bmod 3$}\  (\text{where\ }\la=3m+1+3n\ro,\ m,n\in \Z), \\
|\al|^2 &  \text{otherwise,}
\end{array}
\right.
\end{equation*}
up to the multiplications of powers of $2,3$. Moreover, for $p=2,3$, under the BSD conjecture on the $p$-part of the leading term for the elliptic curve $\E_{\la}$, the above equality holds at powers of $p$ as well.
\end{theorem}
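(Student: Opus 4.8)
The plan is to invoke the Birch--Swinnerton-Dyer formula, which, once the rank is known to be $0$, becomes an identity between the central $L$-value and an explicit product of local invariants; I would then read off each invariant from the results already established, and account for the ambiguity at $2$ and $3$ via Rubin's theorem.

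First I would record that $\E_{\la}/\Q(\ro)$ has Mordell--Weil rank $0$. By Proposition~\ref{Deu} and Lemma~\ref{L1} we have $L(\E_{\la}/\Q(\ro),1)=L(1,\widetilde{\chi_{\la}})L(1,\ol{\widetilde{\chi_{\la}}})=|L(1,\widetilde{\chi_{\la}})|^{2}\neq 0$, so by the theorem of Coates--Wiles \cite{CW} the rank is $0$ and the regulator equals $1$; in particular the leading term of $L(\E_{\la}/\Q(\ro),s)$ at $s=1$ is the value $L(\E_{\la}/\Q(\ro),1)$ itself. Since $\E_{\la}$ has complex multiplication by $\Z[\ro]$, Rubin's work \cite{R} on the main conjecture yields the $p$-part of the Birch--Swinnerton-Dyer formula for every prime $p$ not dividing the number $w=6$ of roots of unity of $\Q(\ro)$, that is, for all $p\geq 5$. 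This is exactly the source of the unconditional statement ``up to multiplication by powers of $2,3$'', and reduces the exact assertion to the $2$- and $3$-parts, which is where the BSD hypothesis enters.

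Next I would compute the four quantities appearing in the formula
$$
L(\E_{\la}/\Q(\ro),1)=\fr{\#\Sha(\E_{\la}/\Q(\ro))\cdot \Omega \cdot \prod_{\p}\tau_{\p}}{\bigl(\#\E_{\la}(\Q(\ro))_{\text{tors}}\bigr)^{2}}
$$
(up to the normalizing factor $\sqrt{|d_{\Q(\ro)}|}=\sqrt{3}$, a power of $3$). The $L$-value is $|L(1,\widetilde{\chi_{\la}})|^{2}=\pe^{2}|\al|^{2}/\ell^{1/3}$, using $|\widetilde{\la}|^{2}=\ell^{1/3}$, which follows from $\widetilde{\la}^{3}=\la$ in Lemma~\ref{Epro}~\eqref{E2} and $\la\ol{\la}=\ell$. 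For the period $\Omega$ at the single complex place, I would use the global minimal model $y^{2}+\la y=x^{3}$ together with the twist relation to $y^{2}=4x^{3}-27$, whose N\'eron lattice is $\pe\Z[\ro]$ (Lemma~\ref{ee}): the sextic twist by $\la^{2}/4$ scales the lattice by $|\la|^{-1/3}$ up to units, so that the covolume $\Omega$ equals $\pe^{2}/\ell^{1/3}$ times an explicit power of $2$ and $3$. The Tamagawa product is $\prod_{\p}\tau_{\p}=\tau_{(\la)}\tau_{(1-\ro)}$, equal to $3\cdot 4=12$ when $\ell\equiv 4\bmod 9$ and $n\equiv 0\bmod 3$, and $3\cdot 1=3$ otherwise, by Lemma~\ref{E}~\eqref{type1}; and the torsion is $\#\E_{\la}(\Q(\ro))_{\text{tors}}=3$ by Lemma~\ref{E}~\eqref{rat}.

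Finally I would substitute: the factors $\pe^{2}$ and $\ell^{1/3}$ cancel between the $L$-value and the period, leaving $\#\Sha(\E_{\la}/\Q(\ro))=|\al|^{2}$ times a rational number whose only prime factors are $2$ and $3$, with the extra factor $\tau_{(1-\ro)}=4$ in the case $\ell\equiv 4\bmod 9,\ n\equiv 0\bmod 3$ responsible for the division by $4$. Combined with the previous paragraph this gives the unconditional conclusion up to powers of $2,3$; the refinement to an exact equality then follows, for $p=2$ and $p=3$ separately, from the assumed $p$-part of the BSD leading-term formula. I expect the main obstacle to be the exact normalization of the period $\Omega$ at the complex place --- pinning down the precise powers of $2$ and of $3$ (including the $\sqrt{3}$ discriminant factor and the factor of $2$ implicit in $\int|\omega\wedge\ol{\omega}|$) so that the part away from $2$ and $3$ comes out exactly as $|\al|^{2}$; all the genuinely arithmetic input is already packaged in Lemma~\ref{L1}, Proposition~\ref{Deu}, and Lemma~\ref{E}.
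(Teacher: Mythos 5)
Your proposal is correct and follows essentially the same route as the paper: nonvanishing of $L(\E_{\la}/\Q(\ro),1)=|\al|^2\pe^2/\ell^{1/3}$ via Proposition~\ref{Deu} and Lemma~\ref{L1}, Rubin's theorem to get the BSD formula up to powers of $2,3$, and substitution of the period, the Tamagawa numbers $\tau_{(\la)}=3$, $\tau_{(1-\ro)}\in\{1,4\}$, and $\#\E_{\la}(\Q(\ro))_{\text{tors}}=3$ from Lemma~\ref{E}. The only cosmetic difference is that the paper pins down the period exactly as $\tau_{\infty}=3\pe^2/\ell^{1/3}$ by comparing the Eisenstein coefficients $g_{6}$ of the lattices for $y^2=4x^3+\la^2$ and $y^2=4x^3-27$, rather than appealing to the general scaling of a sextic twist.
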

\begin{proof}
Let $\tau_{\infty}:=\varpi_{\la}\ol{\varpi_{\la}}$, where $\varpi_{\la}$ is a generator, as $\Z[\ro]$-modules, over $\Z[\ro]$ of the period lattice, in $\C$, of $\E_{\la}$ (Note that $\varpi_{\la}$ is not unique, however, $\tau_{\infty}$ is well-defined).
The substitution $y\mapsto \fr{y}{2}$ gives an equation of the form 
$$
\E_{\la}: y^2=4x^3+\la^2.
$$
From this equation, we have $g_{\la}:=\sum_{0\neq \omega\in \varpi_{\la}\Z[\ro]}\fr{1}{\omega^{6}}=-\fr{\la^2}{140}$ by \cite[Theorem 3.5 (b) of Chap. VI]{Sil'}.
On the other hand, by Lemma~\ref{ee} \eqref{weq}, $g_{1}:=\sum_{0\neq \omega\in \pe\Z[\ro]}\fr{1}{\omega^{6}}=\fr{27}{140}$ by loc. cit.
Thus, we have $\fr{\varpi_{\la}^6}{\pe^6}=\fr{g_{1}}{g_{\la}}=-\fr{27}{\la^2}.$
By taking the absolute values, $\tau_{\infty}^3=\fr{3^3}{\ell}\pe^6$ since $\ell=\la\ol{\la}=|\la|^2$.
Then, we have 
$$
\tau_{\infty}=\fr{3\pe^2}{\ell^{\fr{1}{3}}}.
$$
We recall that 
$$
\tau_{(\la)}=3, \ \#\E_{\la}(\Q(\ro))_{\text{tors}}=3
$$
by Lemma~\ref{E} \eqref{type1}, \eqref{rat}, respectively. 
If $s\in \R$, $\ol{L(s,\widetilde{\chi_{\la}})}=L(s,\ol{\widetilde{\chi_{\la}}})$. Thus, $L(1,\widetilde{\chi_{\la}})\ol{L(1,\widetilde{\chi_{\la}})}=|L(1,\widetilde{\chi_{\la}})|^2$.
Therefore, 
$$
L(\E_{\la}/\Q(\ro),1)=L(1,\widetilde{\chi_{\la}})L(1,\ol{\widetilde{\chi_{\la}}})=\fr{|\al|^2}{\ell^{\fr{1}{3}}}\pe^2
$$
by Proposition~\ref{Deu} and Lemma~\ref{L1} (note that $\widetilde{\la}^3=\la$, $(|\widetilde{\la}|^2)^3=|\la|^2=\ell$).
Thus, $L(\E_{\la}/\Q(\ro),1)\neq 0$ holds since  Lemma~\ref{L1}.
Hence, by \cite[Theorem (i)]{R}, we have $\#\Sha(\E_{\la}/\Q(\ro))<\infty$ and 
\begin{equation} \label{B1}
L(\E_{\la}/\Q(\ro),1)=\tau_{\infty}\tau_{(1-\ro)}\tau_{(\la)}\fr{\#\Sha(\E_{\la}/\Q(\ro))}{\#\E_{\la}(\Q(\ro))_{\text{tors}}^2},
\end{equation}
up to the multiplications of powers of $2,3$. 
If $\ell \equiv 4 \bmod 9$ and $n \equiv 0\bmod 3$, then $\tau_{(1-\ro)}=4$  by Lemma~\ref{E} \eqref{type1}. Thus,  
$$
\fr{|\al|^2}{\ell^{\fr{1}{3}}}\pe^2=\fr{3\pe^2}{\ell^{\fr{1}{3}}}\cdot 4 \cdot 3 \cdot \fr{\#\Sha(\E_{\la}/\Q(\ro))}{3^2},
$$
up to the multiplications of powers of $2, 3$. Therefore, we have $\#\Sha(\E_{\la}/\Q(\ro))=\fr{|\al|^2}{4}$ up to the multiplications of powers of $2,3$.

Otherwise, $\tau_{(1-\ro)}=1$, we have
$$
\fr{|\al|^2}{\ell^{\fr{1}{3}}}\pe^2=\fr{3\pe^2}{\ell^{\fr{1}{3}}}\cdot 1 \cdot 3 \cdot \fr{\#\Sha(\E_{\la}/\Q(\ro))}{3^2},
$$
up to the multiplications of powers of $2,3$. Therefore, we have $\#\Sha(\E_{\la}/\Q(\ro))=|\al|^2$ up to the multiplications of powers of $2,3$. 

Finally, for $p=2,3$, under the BSD conjecture on the $p$-part of the leading term for the elliptic curve $\E_{\la}$, the equality \eqref{B1} holds at powers of $p$ as well. Hence the final claim of the theorem follows. 
\end{proof}

\begin{remark}By the second supplementary law of the cubic reciprocity law, $n\equiv 0\bmod 3 \Longleftrightarrow \chi_{\la}(3)=1$ holds, since $\chi_{\la}(3)=\ro^n$ by \cite[the last formula in Theorem 7.8 of Chap. 7]{Le}, where $n$ is as in Theorem~\ref{bsd}. (Note that the definition of primarity in \cite[\S 7.1]{Le} is slightly different from the one in this paper, however, it does not effect the above equivalence.)
Hence, if $\ell\equiv 7,4 \bmod 9$ and $n\equiv 0\bmod 3$, we have $\al\in \Z$ by Theorem~\ref{defco}.
Under the BSD conjecture on the $2$-part of the leading term for the elliptic curve $\E_{\la}$, we have
\begin{equation} \label{cje}
n\equiv 0\bmod 3 \Longrightarrow 2|\al
\end{equation}
since Theorem~\ref{bsd}.
As long as the center of the table at \cite[Table 1. in p.117]{Asai}, \eqref{cje} is plausible. However, by the case of $p=139$ in Asai's table, we cannot replace $``\Longrightarrow"$ with $``\Longleftrightarrow"$ in \eqref{cje}.
\end{remark}

\begin{corollary} (The Tate-Shafarevich groups and the Bernoulli-Hurwitz-type numbers)\label{mainco}
We write $\la=3m+1+3n\ro,\ m,n\in \Z$, then we have
\begin{equation} \label{B2}
\#\Sha(\E_{\la}/\Q(\ro))\underset{\bmod \la}{\equiv}
\left\{
\begin{array}{ll}
\left(\chi_{\la}(3)D_{\fr{2(\ell-1)}{3}}\right)^2/36 & \text{if\ $\ell \equiv 4 \bmod 9$ and $n \equiv 0\bmod 3$}, \\
\left(\chi_{\la}(3)D_{\fr{2(\ell-1)}{3}}\right)^2/9 & \text{if\ $\ell \equiv 4 \bmod 9$ and $n \equiv \pm1 \bmod 3$}, \\
\left(\ol{\chi_{\la}(3)}C_{\fr{2(\ell-1)}{3}}\right)^2/9 &  \text{if\ $\ell \equiv 7 \bmod 9$,}
\end{array}
\right.
\end{equation}

$$
\#\Sha(\E_{\la}/\Q(\ro))\underset{\bmod \ell}{\equiv}
\left\{
\begin{array}{ll}
\left(3^{\fr{\ell-4}{3}}D_{\fr{2(\ell-1)}{3}}\right)^2/4 & \text{if\ $\ell \equiv 4 \bmod 9$ and $n \equiv 0\bmod 3$}, \\
\left(3^{\fr{\ell-4}{3}}D_{\fr{2(\ell-1)}{3}}\right)^2 & \text{if\ $\ell \equiv 4 \bmod 9$ and $n \equiv \pm1 \bmod 3$}, \\
\left(3^{\fr{\ell-7}{6}}C_{\fr{2(\ell-1)}{3}}\right)^2 &  \text{if\ $\ell \equiv 7 \bmod 9$,}
\end{array}
\right.
$$
up to the multiplications of powers of $2, 3$. Moreover, for $p=2,3$, under the BSD conjecture on the $p$-part of the leading term for the elliptic curve $\E_{\la}$, the congruence \eqref{B2} holds without the ambiguity of the multiplications of powers of $p$.
\end{corollary}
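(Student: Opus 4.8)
The plan is to obtain Corollary~\ref{mainco} as a direct consequence of Theorem~\ref{bsd} and Lemma~\ref{ab}, which together already contain all the substantive content: Theorem~\ref{bsd} expresses $\#\Sha(\E_{\la}/\Q(\ro))$ in terms of $|\al|^2$ (up to powers of $2,3$, and exactly under the relevant BSD hypothesis), while Lemma~\ref{ab} rewrites $|\al|^2$, both modulo $\la$ and modulo $\ell$, in terms of the Bernoulli-Hurwitz-type numbers $C_{\fr{2(\ell-1)}{3}}$ and $D_{\fr{2(\ell-1)}{3}}$. I first note that since $\ell\equiv 1\bmod 3$ and $\ell>3$, the integer $36=4\cdot 9$ is prime to $\ell$, so each of $\fr{1}{4},\fr{1}{9},\fr{1}{36}$ is a unit both modulo $\la$ and modulo $\ell$, and all the asserted congruences are well-posed.

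I would then treat the three cases of the statement in turn. For $\ell\equiv 7\bmod 9$ we are always in the ``otherwise'' branch of Theorem~\ref{bsd}, whose exceptional case requires $\ell\equiv 4$; hence $\#\Sha=|\al|^2$ up to powers of $2,3$, and substituting the $\ell\equiv 7$ lines of Lemma~\ref{ab} gives the last line of each congruence, with denominator $9$ modulo $\la$ and denominator $1$ modulo $\ell$. For $\ell\equiv 4\bmod 9$ and $n\equiv\pm1\bmod 3$ we are likewise in the ``otherwise'' branch, so $\#\Sha=|\al|^2$ up to powers of $2,3$, and the $\ell\equiv 4$ lines of Lemma~\ref{ab} yield the middle line of each congruence. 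For $\ell\equiv 4\bmod 9$ and $n\equiv 0\bmod 3$ Theorem~\ref{bsd} instead gives $\#\Sha=|\al|^2/4$ up to powers of $2,3$; the extra factor $\fr{1}{4}$ then combines with the $\fr{1}{9}$ (resp.\ $1$) from Lemma~\ref{ab} to produce the denominator $36$ modulo $\la$ (resp.\ $4$ modulo $\ell$), which is the first line of each congruence.

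The only point needing care is the handling of the ambiguity by powers of $2,3$, and this is where I expect the (very mild) friction to lie. Writing $\#\Sha=2^{a}3^{b}|\al|^2/c$ with $c\in\{1,4\}$ and $a,b\in\Z$ as Theorem~\ref{bsd} permits, the reductions of the right-hand side modulo $\la$ and modulo $\ell$ differ from the displayed expressions only by the unit $2^{a}3^{b}$, which is exactly the ``up to multiplication by powers of $2,3$'' recorded in the statement. Under the BSD conjecture on the $p$-part of the leading term for $p=2,3$, Theorem~\ref{bsd} sharpens to the exact equality $\#\Sha=|\al|^2/c$, forcing $a=b=0$, so the congruence \eqref{B2} (and likewise its mod $\ell$ companion) holds without any such ambiguity; this establishes the final assertion of the corollary.
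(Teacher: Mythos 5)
Your proposal is correct and follows essentially the same route as the paper, which proves the corollary by directly combining Theorem~\ref{bsd} with Lemma~\ref{ab} (the latter resting on Theorem~\ref{main1}). Your explicit case analysis and the remark that $4$, $9$, $36$ are units modulo $\la$ and modulo $\ell$ merely spell out details the paper leaves implicit.
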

\begin{proof}
These congruences hold from Theorem~\ref{main1}, Theorem~\ref{bsd} and Lemma~\ref{ab}.
\end{proof}
\begin{remark}
There exist infinitely many rational primes $\ell$ such that $\ell\equiv 7,4\bmod 9$ and $\F_{\ell}^{\times}/\langle2,3 \bmod \ell\rangle$ is not trivial, by Chebotarev's density theorem, where $\langle2,3 \bmod \ell\rangle$ is the subgroup generated by $2,3 \bmod \ell$ of $\F_{\ell}^{\times}$.
\end{remark}


\appendix
\section{Appendix}
In this chapter, we show some lemmas and formulae which are very interesting, although they are not used in the proofs of the main results in this paper. In \S A.1, we show lemmas on rational primes dividing the denominator of $d_{n}$.
In \S A.2, we determine the signs and the non-vanishing of $C_{n}$, $D_{n}$ (excluding $D_{6n+2}$), and the Eisenstein series $G_{n}$ associated to the lattice $\pe\Z[\ro]$ (see Lemma~\ref{DG}) by using new recurrence relations of them arising from new formulae of $\Sl$.
In \S A.3, we give numerical tables of $c_{n}, d_{n},$ $G_{n}$, and $BH_{n}$.

\subsection{On the denominator of $d_{n}$}
First, we show a recurrence relation including $G_{6n}$ and $D_{n}$.
\begin{lemma} (Relations between $G_{n}$ and $D_{n}$) \label{DG}
Let 
$$
G_{n}:=\sum_{0\neq \omega\in \pe\Z[\ro]}\fr{1}{\omega^{n}}
$$
for $n\geq 1$ (Eisenstein series associated to the lattice $\pe\Z[\ro]$). Then, for $m\geq 1$, we have
\begin{equation} \label{D}
\sum_{k=0}^mD_{6k+2}(6m-6k-1)G_{6(m-k)}3^{6k}=0,\ \ D_{6m-1}=\fr{(-1)^m3^{3m-1}-1}{2\cdot3^{6m-1}}G_{6m}.
\end{equation}
The second equality of \eqref{D} holds for $m=0$ as well, where $G_{0}:=-1$.
\end{lemma}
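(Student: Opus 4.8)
The plan is to express the symmetric and antisymmetric parts (in $u$) of $1/\Sl(u)$ through the Weierstrass functions $\wp$ and $\zeta$ of the lattice $\pe\Z[\ro]$, to expand both in Laurent series about $u=0$, and to read off the two identities of \eqref{D} by comparing coefficients. Writing $w:=u/(-3\pe)$, so that $\Sl(u)=\vp{w}$ and $\Sl(-u)=\vp{-w}$, the definition \eqref{defslinv} (with $D_{-1}:=1$) gives
\[
\fr{1}{\Sl(u)}+\fr{1}{\Sl(-u)}=2\sum_{m\ge0}D_{6m+2}u^{6m+2},\qquad \fr{1}{\Sl(u)}-\fr{1}{\Sl(-u)}=2\sum_{m\ge0}D_{6m-1}u^{6m-1},
\]
because the exponents $3n+2$ with $n$ even (resp. odd) are exactly the $6m+2$ (resp. $6m-1$). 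So the first identity of \eqref{D} will come from the symmetric combination and the second from the antisymmetric one. Throughout I will use that the $\ro$-symmetry of $\pe\Z[\ro]$ forces $G_n=0$ unless $6\mid n$, so that only the $G_{6j}$ survive, and the relevant Laurent expansions are $\wp(\pe w)=\pe^{-2}w^{-2}\bigl(1+A(w)\bigr)$ with $A(w)=\sum_{j\ge1}(6j-1)G_{6j}\pe^{6j}w^{6j}$, and $\zt{z}=z^{-1}-\sum_{j\ge1}G_{6j}z^{6j-1}$.

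For the symmetric part, Lemma~\ref{pro} \eqref{g} gives $1/\Sl(u)+1/\Sl(-u)=3\,\wp(\pe w)^{-1}$, so that $\bigl(1+A(w)\bigr)\bigl(1/\Sl(u)+1/\Sl(-u)\bigr)=3\pe^2w^2$. Substituting $u=-3\pe w$ and comparing the coefficient of $w^{6M+2}$ for $M\ge1$ (where the right-hand side vanishes) turns this into the convolution
\[
D_{6M+2}3^{6M}+\sum_{j=1}^{M}D_{6(M-j)+2}\,3^{6(M-j)}(6j-1)G_{6j}=0.
\]
Reindexing by $k=M-j$ and absorbing the leading term $D_{6M+2}3^{6M}$ as the $k=M$ summand via $G_0:=-1$ (for which $(6M-6k-1)G_{6(M-k)}=(-1)(-1)=1$) then yields exactly $\sum_{k=0}^{M}D_{6k+2}(6M-6k-1)G_{6(M-k)}3^{6k}=0$. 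The only care needed here is the bookkeeping of the powers of $3$ and $\pe$.

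For the antisymmetric part I will use Lemma~\ref{pro} \eqref{o}, which gives $1/\Sl(u)-1/\Sl(-u)=\vp{w}^{-1}-\vp{-w}^{-1}=\fr{1}{1-\rob}\zt{(1-\ro)\pe w}-\zt{\pe w}$. Inserting the $\zeta$-expansion, the polar terms combine through $(1-\ro)(1-\rob)=3$ to give $-\tfrac{2}{3\pe w}$, while the coefficient of $w^{6j-1}$ in the regular part is $G_{6j}\pe^{6j-1}\bigl(1-\tfrac{(1-\ro)^{6j}}{3}\bigr)$. Here the decisive computation is $(1-\ro)^2=-3\ro$, hence $(1-\ro)^{6j}=(-27)^j=(-1)^j3^{3j}$, which is precisely what manufactures the factor $(-1)^m3^{3m-1}$. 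Comparing with $2\sum_m D_{6m-1}(-3\pe)^{6m-1}w^{6m-1}$ and cancelling $\pe^{6m-1}$ gives $G_{6m}\bigl(1-(-1)^m3^{3m-1}\bigr)=-2D_{6m-1}3^{6m-1}$, which rearranges to the second identity; the case $m=0$ reproduces $D_{-1}=1$ once $G_0=-1$, so the same formula holds there.

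The main obstacle is conceptual rather than computational: one must route the antisymmetric combination through the $\zeta$-identity \eqref{o} rather than through the equally valid expression $\wp'(\pe w)/(3\wp(\pe w))$ coming from \eqref{phi}. The $\zeta$-route collapses each coefficient to a single $G_{6m}$, giving the closed formula in \eqref{D}, whereas the $\wp'/\wp$ route would express the same $D_{6m-1}$ as a messier convolution of several $G_{6j}$; recognizing that this convolution must telescope to one term, and choosing the clean route, is the one genuinely nontrivial step. The remaining work is careful tracking of the substitution $u=-3\pe w$ and the attendant powers of $3$ and $\pe$.
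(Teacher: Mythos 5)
Your proof is correct and follows essentially the same route as the paper's: the first identity comes from the symmetric combination $\Sl(3\pe u)^{-1}+\Sl(-3\pe u)^{-1}=3\wp(\pe u)^{-1}$ (Lemma~\ref{pro}~\eqref{g}) by a coefficient convolution, and the second from the antisymmetric combination routed through the $\zeta$-identity of Lemma~\ref{pro}~\eqref{o} together with $(1-\ro)^{6n}=(-1)^n3^{3n}$. The only differences are cosmetic (working in the variable $w=u/(-3\pe)$ and keeping the pole of $\zeta$ explicit rather than absorbing it into $G_0=-1$).
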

\begin{proof}
First, we note that $G_{6n}\in \Q$ for $n\geq 0$, since $\wp(u)=\fr{1}{u^2}+\sum_{n=1}^{\infty}(6n-1)G_{6n}u^{6n-2}$ around $u=0$ (note that $G_{n}=0$ unless $n$ is congruent to $0$ modulo $6$) and $\wp''=6\wp^2$. We prove the first equality of \eqref{D}. Since Lemma~\ref{pro} \eqref{g}, we have
$$
\left(\fr{1}{\Sl(-3u)}+\fr{1}{\Sl(3u)}\right)\x{u}
=3.
$$
Around $u=0$, we have $\wp(u)=\sum_{n=0}^{\infty}(6n-1)G_{6n}u^{6n-2}$, and $\fr{1}{\Sl(-3u)}+\fr{1}{\Sl(3u)}=18\sum_{n=0}^{\infty}D_{6n+2}3^{6n}u^{6n+2}$ by the definitions. Thus, 
$18\sum_{n=0}^{\infty}D_{6n+2}3^{6n}u^{6n}\sum_{n=0}^{\infty}(6n-1)G_{6n}u^{6n}=3$ follows. By comparing the coefficients of $u^{6m}$ of this equality for $m\geq 1$, the first equality of \eqref{D} follows.

We prove the second equality of \eqref{D}. Since Lemma~\ref{pro} \eqref{o} and $3=(1-\ro)(1-\rob)$, we have 
$$
3\left(\fr{1}{\Sl(-3u)}-\fr{1}{\Sl(3u)}\right)=(1-\ro)\zt{(1-\ro)u}-3\zt{u}.
$$
Around $u=0$, $\zt{u}=-\sum_{n=0}^{\infty}G_{6n}u^{6n-1}$, and $3\left(\fr{1}{\Sl(-3u)}-\fr{1}{\Sl(3u)}\right)=-2\sum_{n=0}^{\infty}D_{6n-1}3^{6n}u^{6n-1}$ by the definitions. Thus,
$$
-2\sum_{n=0}^{\infty}D_{6n-1}3^{6n}u^{6n-1}=-\sum_{n=0}^{\infty}G_{6n}\{(1-\ro)^{6n}-3\}u^{6n-1}=-\sum_{n=0}^{\infty}G_{6n}\{(-1)^{n}3^{3n}-3\}u^{6n-1}
$$
follows, since $(1-\ro)^{6n}=(-1)^n3^{3n}$. By comparing the coefficients of $u^{6n-1}$ of this equality for $n\geq 0$, the second equality of \eqref{D} is proved.
\end{proof}


\begin{definition} (The Bernoulli-Hurwitz number for the period lattice $\pe\Z[\ro]$)
Set $BH_{6m}:=(6m)!G_{6m}$ for $m\geq 1$. (Note that $BH_{6m}$ is the Bernoulli-Hurwitz number for the elliptic curve $y^2=4x^3-27$ defined in \cite{Katz}.)
\end{definition}

\begin{lemma} (The denominator of $D_{6m+2}$) \label{denX}
For a rational prime $\ell \equiv 1\bmod 3$ and $m \geq 0$, we define
$$
t(\ell,m):=\left\lfloor\fr{6m}{\ell-1}\right\rfloor,
$$
where $\lfloor\cdot\rfloor$ denotes the floor function.
%
Then, 
$$
\prod_{\substack{\ell: \text{rational prime} \\ \ell \equiv 1 \bmod 3}}\ell^{t(\ell, m)}\cdot 2(6m)!D_{6m+2}\in \Z\left[\fr{1}{3}\right].
$$
Note that the product on the left hand side is a finite product.
\end{lemma}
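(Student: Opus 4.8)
The plan is to verify the containment prime by prime: since the target ring is $\Z[\fr13]$, it suffices to show $v_{q}\big(\prod_{\ell}\ell^{t(\ell,m)}\cdot 2(6m)!D_{6m+2}\big)\geq 0$ for every rational prime $q\neq 3$. I would first record the identity $2(6m)!D_{6m+2}=\fr{2d_{6m+2}}{(6m+1)(6m+2)}$, where $d_{6m+2}=(6m+2)!D_{6m+2}$, so that by Lemma~\ref{dend} the numerator $2d_{6m+2}$ lies in $\Z[\fr13]$ and the only denominator away from $3$ divides $(6m+1)(6m+2)$. The two residue classes of $q$ modulo $3$ then call for genuinely different arguments, mirroring the ordinary/supersingular split for the CM curve $y^{2}=4x^{3}-27$: I treat split primes $\ell\equiv1\bmod3$ combinatorially, and inert primes $q\equiv2\bmod3$ (including $q=2$) via generating functions.

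For a split prime $\ell\equiv1\bmod3$, the factor $\ell^{t(\ell,m)}$ is exactly what should absorb the $\ell$-part of $(6m+1)(6m+2)$, and the key (elementary) claim I would prove is $t(\ell,m)=\lfloor\fr{6m}{\ell-1}\rfloor\geq v_{\ell}\big((6m+1)(6m+2)\big)=:a$. Since $6m+1$ and $6m+2$ are coprime, $\ell^{a}$ divides just one of them; if $\ell^{a}\mid 6m+1$ then $6m\geq \ell^{a}-1\geq a(\ell-1)$, while if $\ell^{a}\mid 6m+2=2(3m+1)$ then (as $\ell$ is odd) $\ell^{a}\mid 3m+1$ and even $3m\geq a(\ell-1)$; either way $\fr{6m}{\ell-1}\geq a$. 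Combined with $v_{\ell}(2d_{6m+2})\geq 0$, this yields the required $v_{\ell}\geq t(\ell,m)-a\geq0$.

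For an inert prime $q\equiv2\bmod3$ the product contributes nothing at $q$, so I must show $2(6m)!D_{6m+2}\in\Z_{(q)}$ directly. The plan is to substitute $u\mapsto u/3$ in the first relation of Lemma~\ref{DG} — equivalently in the identity $6\big(\sum_{n}D_{6n+2}3^{6n}u^{6n}\big)\big(\sum_{n}(6n-1)G_{6n}u^{6n}\big)=1$ established in its proof — to obtain
$$
2\sum_{m\geq0}D_{6m+2}u^{6m}=\fr13\,Q(u)^{-1},\qquad Q(u):=\sum_{n\geq0}(6n-1)G_{6n}\Big(\fr{u}{3}\Big)^{6n},
$$
with $Q(0)=1$ by the convention $G_{0}=-1$. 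The Hurwitz coefficient of $u^{6n}$ in $Q$ equals $(6n-1)3^{-6n}BH_{6n}$ with $BH_{6n}=(6n)!G_{6n}$, so once one knows that each $BH_{6n}$ is $q$-integral it follows (as $q\neq3$) that $Q\in\mathcal{H}_{\Z_{(q)}}$. Because $\mathcal{H}_{\Z_{(q)}}$ is a ring closed under inversion of series with unit constant term (the inverse coefficients arise from $b_{n}=-a_{0}^{-1}\sum_{k=1}^{n}\binom{n}{k}a_{k}b_{n-k}$), I get $Q^{-1}\in\mathcal{H}_{\Z_{(q)}}$, hence $\fr13Q^{-1}\in\mathcal{H}_{\Z_{(q)}}$, and reading off the $u^{6m}$-coefficient gives $2(6m)!D_{6m+2}\in\Z_{(q)}$; the prime $q=2$ is covered because $2$ is inert in $\Z[\ro]$ and $\fr13\in\Z_{(2)}$.

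The main obstacle is the input used in this last step: the $q$-integrality of the Bernoulli--Hurwitz numbers $BH_{6n}$ at the inert (supersingular) primes $q\equiv2\bmod3$. This is precisely the Clausen--von Staudt theorem for Bernoulli--Hurwitz numbers, which I would invoke from Katz \cite{Katz} (applied to $y^{2}=4x^{3}-27$, which is supersingular at every inert prime) rather than reprove; everything else — the reduction to $(6m+1)(6m+2)$ and the floor estimate for ordinary primes — is elementary. I would finish by assembling the three local bounds to conclude $v_{q}\geq0$ for all $q\neq3$, which is exactly the assertion.
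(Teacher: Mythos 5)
Your argument is correct, but it takes a genuinely different route from the paper's. The paper proceeds by induction on $m$: it rewrites the first recurrence of Lemma~\ref{DG} as an expression for $3^{6m}h_m$ (where $h_m$ denotes the quantity to be shown integral) in terms of $h_0,\dots,h_{m-1}$ and the products $\ell^{t(\ell,m)-t(\ell,k)}BH_{6(m-k)}$, and it needs the full strength of \cite{Katz} --- supersingular integrality at $q\equiv 2\bmod 3$ \emph{and} the ordinary von Staudt--Clausen statement $\bigl(\prod_{\ell-1\mid 6m}\ell\bigr)BH_{6m}\in\Z[\fr{1}{3}]$ --- together with the observation that $\ell-1\mid 6(m-k)$ forces $t(\ell,m)>t(\ell,k)$, so that the extra factor of $\ell$ needed at an ordinary prime is always available. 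You instead argue prime by prime. At split primes $\ell\equiv 1\bmod 3$ you avoid Katz altogether: the identity $2(6m)!D_{6m+2}=2d_{6m+2}/((6m+1)(6m+2))$, Lemma~\ref{dend} (which the paper's proof of this lemma does not invoke), and your elementary estimate $t(\ell,m)\ge v_\ell((6m+1)(6m+2))$ --- correctly verified via $\ell^a-1\ge a(\ell-1)$ in each of the two coprime factors --- suffice, and in fact give the sharper local bound $v_\ell(2(6m)!D_{6m+2})\ge -v_\ell((6m+1)(6m+2))$. At inert primes your inversion of $Q(u)$ in $\mathcal{H}_{\Z_{(q)}}$ is coefficient-for-coefficient the same recurrence the paper exploits, just localized at $q$, and the only external input is the supersingular half of Katz's theorem. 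Net effect: you trade the ordinary-prime half of \cite{Katz} for Lemma~\ref{dend}, and there is no circularity since Lemma~\ref{dend} is proved independently earlier in the paper. The one caveat --- that the $2$-integrality of $BH_{6n}$ must also be extracted from \cite{Katz} --- is inherited from the paper, which makes exactly the same appeal in its own proof, so you introduce no new gap.
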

\begin{proof}
We write $h_{m}:=\prod_{\substack{\ell: \text{rational prime} \\ \ell \equiv 1 \bmod 3}}\ell^{t(\ell, m)}\cdot 2(6m)!D_{6m+2}$ for $m \geq 0$. Since $h_{0}=2D_{2}=\fr{1}{3}$ (see Table~\ref{td}), the lemma holds for $m=0$.
For $m\geq 1$, we assume that $h_{k}\in \Z\left[\fr{1}{3}\right]$ for $k=0,\ldots, m-1$.
By the first equality of \eqref{D}, we have
\begin{equation} \label{KK}
h_{m}3^{6m}=-\sum_{k=0}^{m-1}h_{k}(6m-6k-1)\left(\prod_{\substack{\ell: \text{rational prime} \\ \ell \equiv 1 \bmod 3}}\ell^{t(\ell, m)-t(\ell,k)}\right)BH_{6(m-k)}\binom{6m}{6k}3^{6k}.
\end{equation}
Since the elliptic curve $y^2=4x^3-27$ has good reduction (resp. good ordinary reduction) at $\ell$ if and only if $\ell\neq 3$ (cf. the proof of Lemma~\ref{E} \eqref{red}) (resp. $\ell\equiv 1\bmod 3$ (see \cite[Chap II, exer. 2.30]{Sil})),
it holds that $\left(\prod_{\substack{\ell \ \text{is rational prime}\\ \ell-1|6m \\ \ell\equiv 1\bmod 3}}\ell\right) BH_{6m}\in \Z[\fr{1}{3}]$ by \cite[Theorem]{Katz}. In terms of $m$, $t(\ell,m)$ is nondecreasing.
For $k=0,\ldots ,m-1$ and a rational prime $\ell\equiv 1 \bmod 3$, if $\ell-1$ divides $6(m-k)$, we have $t(\ell,m)>t(\ell,k)$, since $t(\ell,m)=\left\lfloor\fr{6m}{\ell-1}\right\rfloor=\fr{6(m-k)}{\ell-1}+\left\lfloor\fr{6k}{\ell-1}\right\rfloor>t(\ell,k)$.
Therefore, the right hand side of \eqref{KK} is $\in \Z \left[\fr{1}{3}\right]$.
\end{proof}

\begin{remark}
Numerically, we guess that 
$$
BH_{6m}\in \Z_{(3)},
$$
where $\Z_{(3)}$ is the localization of $\Z$ by the prime ideal $3\Z$ (see Table~\ref{tg}). If this is true, then the same proof of lemma also shows that 
$$
2\cdot 3^{6m+1}(6m)!L(m)\cdot D_{6m+2}\in \Z,
$$
where let $L(m):=\prod_{\substack{\ell: \text{rational prime} \\ \ell \equiv 1 \bmod 3}}\ell^{t(\ell, m)}$.
\end{remark}

\begin{lemma} (Integrality of $d_{6m+2}$ outside $3$) \label{dend'}
For $m\geq 0$, we have $d_{6m+2}\in \Z\left[\fr{1}{3}\right]$.
\end{lemma}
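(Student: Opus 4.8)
The plan is to verify $d_{6m+2}\in \Z\left[\fr{1}{3}\right]$ one prime at a time, i.e. to show that the $p$-adic valuation $v_{p}(d_{6m+2})\geq 0$ for every rational prime $p\neq 3$. The crucial observation is that Lemma~\ref{dend} already disposes of every \emph{odd} prime for free: it gives $2d_{6m+2}\in \Z\left[\fr{1}{3}\right]$, so $v_{p}(2d_{6m+2})\geq 0$, and since $v_{p}(2)=0$ for odd $p$, we get $v_{p}(d_{6m+2})=v_{p}(2d_{6m+2})\geq 0$ for all primes $p\neq 2,3$. Thus the entire content of the lemma beyond Lemma~\ref{dend} is the single claim that $v_{2}(d_{6m+2})\geq 0$, i.e. that the lone factor of $2$ allowed by Lemma~\ref{dend} does not actually occur.

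To pin down the $2$-adic valuation I would invoke Lemma~\ref{denX}. Writing $L(m):=\prod_{\ell\equiv 1\bmod 3}\ell^{t(\ell,m)}$, that lemma states $L(m)\cdot 2(6m)!D_{6m+2}\in \Z\left[\fr{1}{3}\right]$. The point is that $L(m)$ is a product of primes $\ell\equiv 1\bmod 3$, all of which are odd, so $v_{2}(L(m))=0$ and hence $v_{2}\!\left(2(6m)!D_{6m+2}\right)\geq 0$. Now rewrite this quantity in terms of $d_{6m+2}=(6m+2)!D_{6m+2}$:
$$
2(6m)!D_{6m+2}=\fr{2(6m)!}{(6m+2)!}\,d_{6m+2}=\fr{d_{6m+2}}{(3m+1)(6m+1)}.
$$
Taking $2$-adic valuations and using that $6m+1$ is odd gives $v_{2}(d_{6m+2})-v_{2}(3m+1)\geq 0$, that is $v_{2}(d_{6m+2})\geq v_{2}(3m+1)\geq 0$, which is exactly the missing estimate. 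Note that this argument is uniform in $m\geq 0$ and requires no induction (the base case $d_{2}=2D_{2}=\fr{1}{3}$ is covered automatically).

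Combining the two inputs yields $v_{p}(d_{6m+2})\geq 0$ for every prime $p\neq 3$, which is the assertion $d_{6m+2}\in \Z\left[\fr{1}{3}\right]$. I do not expect a genuine obstacle here; the only mildly delicate step is the bookkeeping that isolates $p=2$ as the sole residual prime and then checks that the odd numbers $L(m)$ and $6m+1$ contribute nothing $2$-adically. It is worth emphasising why each ingredient is indispensable: Lemma~\ref{denX} on its own is too weak at the primes $\ell\equiv 1\bmod 3$ (there $v_{\ell}(L(m))=t(\ell,m)$ can exceed $v_{\ell}((6m+2)(6m+1))$), whereas Lemma~\ref{dend} kills those primes effortlessly; conversely Lemma~\ref{dend} leaves a possible factor of $2$ that only the odd product $L(m)$ occurring in Lemma~\ref{denX} can remove.
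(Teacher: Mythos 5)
Your proposal is correct and follows essentially the same route as the paper's proof: both combine Lemma~\ref{dend} (which settles all primes other than $2$ and $3$) with Lemma~\ref{denX} (whose factor $L(m)$ is odd, while $6m+2$ is even, which forces the extra factor of $2$), your version merely phrasing the divisibility bookkeeping in terms of $2$-adic valuations. No gap.
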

\begin{proof}
By Lemma~\ref{dend}, we can write $2d_{6m+2}=\fr{N_{m}}{3^{r(m)}}$, $r(m)\geq 0$, $N_{m}\in \Z$.
By Lemma~\ref{denX}, we have $2L(m)\cdot \fr{d_{6m+2}}{(6m+1)(6m+2)}\in \Z\left[\fr{1}{3}\right]$.
Thus, $L(m)\cdot \fr{N_{m}}{(6m+1)(6m+2)}\cdot \fr{1}{3^{r(m)}}\in \Z\left[\fr{1}{3}\right]$ holds.
Then, we have $(6m+1)(6m+2)| L(m)\cdot N_{m}$, since $6m+1$ and $6m+2$ are relatively prime to $3$.
Especially, we have $2| N_{m}$, since $2$ does not divide $L(m)$.
Therefore, the lemma is proved.
\end{proof}

\begin{lemma}
\begin{enumerate}[(1)]
\item \label{fr3}
$$
\fr{1}{\Sl(\sqrt{-3}u)}+\fr{1}{\Sl(-\sqrt{-3}u)}=-\fr{\Sl(u)^2}{\Cl(u)}=\Sl(-u)\Sl(u) \in \mathcal{H}_{\Z}.
$$
\item (The denominator of $d_{6m+2}$)\label{d3}
$$
d_{6n+2}3^{3n+1}\in \Z.
$$

\end{enumerate}

\end{lemma}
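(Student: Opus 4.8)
For part \eqref{fr3}, the second equality and the $\mathcal{H}_{\Z}$-membership are immediate. Lemma~\ref{pro}\eqref{-} gives $\Sl(-u)=-\Sl(u)/\Cl(u)$, whence $\Sl(-u)\Sl(u)=-\Sl(u)^2/\Cl(u)$; and since $\Sl(u)\in\mathcal{H}_{\Z}$ by Lemma~\ref{denc}, since the substitution $u\mapsto -u$ (which sends $\sum a_j u^j/j!$ to $\sum(-1)^j a_j u^j/j!$) preserves $\mathcal{H}_{\Z}$, and since $\mathcal{H}_{\Z}$ is a ring, the product $\Sl(-u)\Sl(u)$ lies in $\mathcal{H}_{\Z}$. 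The real content is thus the first equality.

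To establish $\Sl(\sqrt{-3}u)^{-1}+\Sl(-\sqrt{-3}u)^{-1}=-\Sl(u)^2/\Cl(u)$, the plan is to exploit the factorization $\sqrt{-3}=\ro(1-\ro)$ (indeed $\ro-\ro^2=\ro-\rob=\sqrt{-3}$). Setting $s:=u/(-3\pe)$, so that $\Sl(u)=\vp{s}$ and $\Cl(u)=\ps{s}$, the relation $\vp{\ro v}=\ro\vp{v}$ (Lemma~\ref{pro}\eqref{r}) gives $\Sl(\pm\sqrt{-3}u)^{-1}=\rob\,\vp{\pm(1-\ro)s}^{-1}$, so by Lemma~\ref{pro}\eqref{g} the left-hand side collapses to $3\rob/\x{(1-\ro)\pe s}$. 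I would then evaluate $\x{(1-\ro)\pe s}$ from the $\wp$-addition formula (\cite[Chap. XX, 20.3]{WW}) applied to $(1-\ro)\pe s=\pe s+(-\ro\pe s)$, using $\x{\ro z}=\ro\x{z}$, $\y{\ro z}=\y{z}$ (Lemma~\ref{ee}\eqref{rwp}), $(1-\ro)^2=-3\ro$, $1+\ro=-\rob$, and $\wp'^2=4\wp^3-27$; this yields $\x{(1-\ro)\pe s}=\fr{\rob(27-\x{\pe s}^3)}{3\x{\pe s}^2}$, so the left-hand side becomes $\fr{9\x{\pe s}^2}{27-\x{\pe s}^3}$. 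Finally, substituting $\x{\pe s}=\fr{3\vp{s}}{1-\ps{s}}$ (Lemma~\ref{pro}\eqref{psi}) and $\vp{s}^3+\ps{s}^3=1$ (Lemma~\ref{pro}\eqref{h}), a short simplification lands on $-\vp{s}^2/\ps{s}=-\Sl(u)^2/\Cl(u)$.

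For part \eqref{d3}, I would expand the left-hand side of \eqref{fr3} from the defining expansion $\Sl(v)^{-1}=\fr{1}{v}+\sum_{k\geq 0}D_{3k+2}v^{3k+2}$ at $v=\pm\sqrt{-3}u$. The polar terms cancel, the terms with $k$ odd cancel, and $(\sqrt{-3})^{6n+2}=(-3)^{3n+1}$, so the left-hand side equals $2\sum_{n\geq 0}D_{6n+2}(-3)^{3n+1}u^{6n+2}$. Since this series equals $\Sl(-u)\Sl(u)\in\mathcal{H}_{\Z}$ by \eqref{fr3}, and a power series $\sum b_j u^j$ lies in $\mathcal{H}_{\Z}$ if and only if $j!\,b_j\in\Z$ for all $j$, comparing the coefficient of $u^{6n+2}$ yields $2\cdot 3^{3n+1}d_{6n+2}\in\Z$. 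Invoking Lemma~\ref{dend'}, which gives $d_{6n+2}\in\Z\left[\fr{1}{3}\right]$, the denominator of $d_{6n+2}$ is a power of $3$; as $2$ is prime to $3$, the factor $2$ is redundant and $3^{3n+1}d_{6n+2}\in\Z$ follows.

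The main obstacle is the first equality of \eqref{fr3}: the decisive move is recognizing $\sqrt{-3}=\ro(1-\ro)$ so that Lemma~\ref{pro}\eqref{g} becomes applicable, after which everything reduces to the complex-multiplication evaluation of $\x{(1-\ro)\pe s}$ and its translation back into $\Sl,\Cl$ via \eqref{psi} and \eqref{h}; the algebra is routine but must be steered carefully to land exactly on $-\Sl(u)^2/\Cl(u)$. In part \eqref{d3} the only delicate point is discarding the spurious factor $2$, for which the independent integrality statement Lemma~\ref{dend'} is precisely what is required.
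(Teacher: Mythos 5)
Your proof is correct, and part \eqref{d3} follows the paper essentially verbatim (expand at $v=\pm\sqrt{-3}u$, read off $2d_{6n+2}(-3)^{3n+1}\in\Z$ from the $\mathcal{H}_{\Z}$-membership, then kill the factor $2$ with Lemma~\ref{dend'}). For the first equality of \eqref{fr3}, however, you take a genuinely different route. The paper starts from the explicit complex-multiplication formula $\fr{1}{\Sl(\sqrt{-3}u)}=\fr{1+\rob\Sl(u)^3}{\sqrt{-3}\Sl(u)\Cl(u)}$ (quoted from Asai, or obtained by putting $u:=\ro u$, $v:=-\rob u$ in the $\Sl$-addition formula of Lemma~\ref{addsl'}), derives the companion formula for $\fr{1}{\Sl(-\sqrt{-3}u)}$ from Lemma~\ref{pro}~\eqref{-}, and adds; the whole computation stays at the level of $\Sl$ and $\Cl$ and the numerators collapse via $\ro-\rob=\sqrt{-3}$ in one line. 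You instead exploit only the \emph{symmetrized} sum: after factoring $\sqrt{-3}=\ro(1-\ro)$ and pulling out $\rob$, Lemma~\ref{pro}~\eqref{g} reduces the left-hand side to $3\rob/\x{(1-\ro)\pe s}$, which you evaluate by the $\wp$-addition formula and translate back through Lemma~\ref{pro}~\eqref{psi} and \eqref{h}; I checked the intermediate values $\x{(1-\ro)\pe s}=\rob(27-\x{\pe s}^3)/(3\x{\pe s}^2)$ and $27-\x{\pe s}^3=-81\ps{s}/(1-\ps{s})^2$, and the simplification does land on $-\vp{s}^2/\ps{s}$. The trade-off: the paper's argument is shorter because it borrows a stronger statement (the individual value of $1/\Sl(\sqrt{-3}u)$), while yours needs only lemmas already proved in the text plus the standard $\wp$-addition formula, at the price of a longer but routine algebraic reduction. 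Both are valid; your handling of the second equality, the $\mathcal{H}_{\Z}$-membership, and the removal of the factor $2$ in \eqref{d3} matches the paper exactly.
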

\begin{proof}
\eqref{fr3}
We have $$
\fr{1}{\Sl(\sqrt{-3}u)}=\fr{1+\rob \Sl(u)^3}{\sqrt{-3}\Sl(u)\Cl(u)}
$$
by \cite[2. (viii) of Appendix]{Asai} or putting $u:=\ro u$ and $v:=-\rob u$ of the addition formula of $\Sl$ (Lemma~\ref{addsl'}).
By $\Sl(-u)=-\fr{\Sl(u)}{\Cl(u)}$ and $\Cl(-u)=\fr{1}{\Cl(u)}$ (Lemma~\ref{pro} \eqref{-}), we have
$$
\fr{1}{\Sl(-\sqrt{-3}u)}=\fr{-1-\ro \Sl(u)^3}{\sqrt{-3}\Sl(u)\Cl(u)}.
$$
Then, by two equalities and noting that $\ro-\rob=\sqrt{-3}$, the first equality of \eqref{fr3} is proved. The second equality of \eqref{fr3} also follows from $\Sl(-u)=-\fr{\Sl(u)}{\Cl(u)}$ (Lemma~\ref{pro} \eqref{-}).
By lemma~\ref{denc}, $\Sl(u), \Sl(-u)\in \mathcal{H}_{\Z}$. Thus, $\Sl(u)\Sl(-u)\in \mathcal{H}_{\Z}$. 

\eqref{d3} By the definition, we have $\fr{1}{\Sl(\sqrt{-3}u)}+\fr{1}{\Sl(-\sqrt{-3}u)}
=2\sum_{n=0}^{\infty}d_{6n+2}(-3)^{3n+1}\fr{u^{6n+2}}{(6n+2)!}$.
By \eqref{fr3}, we have $2d_{6n+2}(-3)^{3n+1}\in \Z$. By Lemma~\ref{dend'}, $d_{6n+2}(-3)^{3n+1}\in \Z$.

\end{proof}

\begin{lemma} (The denominator of $d_{6m-1}$) \label{den-1} 
For $m\geq 1$, we have
$$
2m\left(\prod_{\substack{\ell \ \text{is rational prime}\\ \ell-1|6m \\ \ell\equiv 1\bmod 3}}\ell\right) d_{6m-1}\in \Z\left[\fr{1}{3}\right].
$$
\end{lemma}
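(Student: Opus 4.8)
The plan is to read everything off from the second equality of \eqref{D} in Lemma~\ref{DG}, which already gives $D_{6m-1}$ in closed form in terms of the Eisenstein series $G_{6m}$, and then to feed in Katz's integrality theorem for the Bernoulli--Hurwitz numbers $BH_{6m}$ exactly as in the proof of Lemma~\ref{denX}. First I would translate between the two normalizations. Since $\fr{1}{\Sl(u)}=\sum_{n\geq 0}D_{3n-1}u^{3n-1}$ gives $D_{6m-1}=d_{6m-1}/(6m-1)!$ (the term $n=2m$), and since $BH_{6m}=(6m)!\,G_{6m}$, the formula $D_{6m-1}=\fr{(-1)^m3^{3m-1}-1}{2\cdot 3^{6m-1}}G_{6m}$, after multiplication by $(6m-1)!$ and use of $(6m-1)!/(6m)!=1/(6m)$, becomes
\[
d_{6m-1}=\fr{(-1)^m3^{3m-1}-1}{2\cdot 3^{6m-1}}\cdot\fr{BH_{6m}}{6m}.
\]

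Next I would multiply through by $2m$ and by the displayed product of primes; since $2m/(6m)=\fr{1}{3}$ this produces
\[
2m\left(\prod_{\substack{\ell:\ \text{rational prime}\\ \ell-1\mid 6m\\ \ell\equiv 1\bmod 3}}\ell\right)d_{6m-1}
=\fr{(-1)^m3^{3m-1}-1}{2}\cdot\fr{1}{3^{6m}}\cdot\left(\prod_{\substack{\ell:\ \text{rational prime}\\ \ell-1\mid 6m\\ \ell\equiv 1\bmod 3}}\ell\right)BH_{6m}.
\]
It then remains to verify that each factor on the right lies in $\Z\left[\fr{1}{3}\right]$. The prefactor $\fr{(-1)^m3^{3m-1}-1}{2}$ is an integer: for $m\geq 1$ we have $3m-1\geq 2$, so $3^{3m-1}$ is an odd positive integer, whence $(-1)^m3^{3m-1}$ is odd and the numerator is even. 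The factor $3^{-6m}$ is of course harmless in $\Z\left[\fr{1}{3}\right]$.

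The one nontrivial input is the integrality $\left(\prod_{\ell-1\mid 6m,\ \ell\equiv 1\bmod 3}\ell\right)BH_{6m}\in\Z\left[\fr{1}{3}\right]$, which is Katz's theorem \cite[Theorem]{Katz} applied to $y^2=4x^3-27$ (good reduction away from $3$, good ordinary reduction precisely at $\ell\equiv 1\bmod 3$), and which was already invoked verbatim in the proof of Lemma~\ref{denX}. Combining these facts yields the claim. I do not anticipate any genuine obstacle here: all the arithmetic content is carried by the second formula of Lemma~\ref{DG} and by Katz's theorem, so the remaining work is only the bookkeeping between the $d$/$D$ and $BH$/$G$ normalizations together with the elementary parity observation; the delicate point to state carefully is merely that the product of primes appearing here is literally the same as the one clearing the denominator of $BH_{6m}$.
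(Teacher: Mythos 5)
Your proposal is correct and follows essentially the same route as the paper: both start from the closed formula $D_{6m-1}=\fr{(-1)^m3^{3m-1}-1}{2\cdot3^{6m-1}}G_{6m}$ of Lemma~\ref{DG}, rewrite it as $d_{6m-1}=\fr{(-1)^m3^{3m-1}-1}{2}\cdot\fr{BH_{6m}}{2m}\cdot\fr{1}{3^{6m}}$, check the parity of the numerator to absorb the factor $2$, and invoke Katz's theorem for the integrality of the product of primes times $BH_{6m}$. The only cosmetic difference is that the paper notes the slightly stronger congruence $(-1)^m3^{3m-1}-1\equiv 2\bmod 4$, while your evenness observation already suffices.
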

\begin{proof}
From the second equality of \eqref{D}, for $m\geq 1$, we have
$$
d_{6m-1}=\fr{(-1)^m3^{3m-1}-1}{2\cdot3^{6m-1}}\fr{BH_{6m}}{6m}=\fr{(-1)^m3^{3m-1}-1}{2}\cdot \fr{BH_{6m}}{2m} \cdot \fr{1}{3^{6m}}.
$$
Since $(-1)^m3^{3m-1}-1\equiv 2\bmod 4$, $\fr{(-1)^m3^{3m-1}-1}{2}\in \Z$.
From \cite[Theorem]{Katz}, $\left(\prod_{\substack{\ell \ \text{is rational prime}\\ \ell-1|6m \\ \ell\equiv 1\bmod 3}}\ell\right) BH_{6m}\in \Z[\fr{1}{3}]$. Then, $2m\left(\prod_{\substack{\ell \ \text{is rational prime}\\ \ell-1|6m \\ \ell\equiv 1\bmod 3}}\ell\right) d_{6m-1}\in \Z[\fr{1}{3}]$ holds.
\end{proof}

\subsection{On the signs and the non-vanishing of $C_{n}, D_{n},$ and $G_{n}$}
In this section, we determine the signs of $C_{n}, D_{n}$ (excluding $D_{6n+2}$), and $G_{n}$. Moreover, it is proved that $C_{3n+1}$, $D_{6n-1}$, $G_{6n}$, and $BH_{6n}$ do not vanish.
First, we determine the signs and the non-vanishing of $G_{n}$, $BH_{6n}$ and $D_{6n-1}$.
\begin{lemma}[The signs and the non-vanishing of $G_{6n}$, $BH_{6n}$, and $D_{6n-1}$] \label{sGD}
For $n\geq 2$, we have
\begin{equation} \label{rG}
(6n-1)\{(6n-2)(6n-3)-12\}G_{6n}=6\sum_{k=1}^{n-1}(6k-1)(6n-6k-1)G_{6k}G_{6(n-k)},
\end{equation}
and for $n\geq 1$, $G_{6n}>0$, hence $BH_{6n}>0$. Moreover, $(-1)^mD_{6m-1}>0$ holds for $m\geq 1$. In particular, $G_{6n}$, $BH_{6n}$, and $D_{6n-1}$ are non-zero.
\end{lemma}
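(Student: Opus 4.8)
The plan is to establish the recurrence \eqref{rG} first, then bootstrap positivity of $G_{6n}$ by induction on $n$, and finally read off the signs of $BH_{6n}$ and $D_{6m-1}$ from it; the non-vanishing statements will then be immediate.

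To prove \eqref{rG}, I would insert the Laurent expansion $\wp(u)=\fr{1}{u^2}+\sum_{n\geq 1}(6n-1)G_{6n}u^{6n-2}$ into the differential equation $\wp''=6\wp^2$, which is valid because $g_2=60G_4=0$ (Lemma~\ref{ee}\eqref{weq}). Writing $a_n:=(6n-1)G_{6n}$, differentiating termwise gives $\wp''=\fr{6}{u^4}+\sum_{n\geq 1}(6n-2)(6n-3)a_n u^{6n-4}$, while squaring the series gives $6\wp^2=\fr{6}{u^4}+12\sum_{n\geq 1}a_n u^{6n-4}+6\sum_{n,m\geq 1}a_na_m u^{6(n+m)-4}$. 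The $u^{-4}$ terms cancel, and comparing the coefficient of $u^{6n-4}$ for $n\geq 2$ (where the convolution term is nonempty) yields $a_n\{(6n-2)(6n-3)-12\}=6\sum_{k=1}^{n-1}a_ka_{n-k}$. Substituting back $a_n=(6n-1)G_{6n}$ produces exactly \eqref{rG}; note that for $n=1$ the prefactor $(6n-2)(6n-3)-12$ vanishes, which is why the recurrence is stated only for $n\geq 2$.

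For positivity I would induct on $n$. The base case $G_6=\fr{27}{140}>0$ is immediate from $140G_6=27$ (Lemma~\ref{ee}\eqref{weq}). For $n\geq 2$, assuming $G_{6k}>0$ for $1\leq k<n$, every summand $(6k-1)(6n-6k-1)G_{6k}G_{6(n-k)}$ on the right of \eqref{rG} is strictly positive, so the right-hand side is positive; since the coefficient $(6n-1)\{(6n-2)(6n-3)-12\}$ is positive for $n\geq 2$ (it equals $11\cdot 78$ at $n=2$ and increases thereafter), we conclude $G_{6n}>0$. Hence $BH_{6n}=(6n)!\,G_{6n}>0$ for all $n\geq 1$. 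Finally, the sign of $D_{6m-1}$ follows from the second identity of \eqref{D} in Lemma~\ref{DG}: multiplying $D_{6m-1}=\fr{(-1)^m3^{3m-1}-1}{2\cdot3^{6m-1}}G_{6m}$ by $(-1)^m$ gives $(-1)^mD_{6m-1}=\fr{3^{3m-1}-(-1)^m}{2\cdot3^{6m-1}}G_{6m}$, and since $3^{3m-1}\geq 9>1$ forces $3^{3m-1}-(-1)^m>0$ for every $m\geq 1$ while $G_{6m}>0$, we obtain $(-1)^mD_{6m-1}>0$. The non-vanishing of $G_{6n}$, $BH_{6n}$, and $D_{6m-1}$ is then a direct consequence of these strict inequalities.

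The computations are all routine; the only genuine care lies in the coefficient comparison producing \eqref{rG} (tracking the off-diagonal convolution term and the extra factor $12$ coming from the cross term correctly) and in confirming that the prefactor $(6n-2)(6n-3)-12$ is strictly positive on precisely the range $n\geq 2$. This sign of the prefactor is exactly what lets the induction close, since it keeps $G_{6n}$ on the same side as the manifestly positive right-hand side, so I would treat verifying it as the linchpin of the argument rather than as a true obstacle.
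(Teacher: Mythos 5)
Your proposal is correct and follows essentially the same route as the paper: both derive \eqref{rG} by comparing coefficients in $\wp''=6\wp^2$ (the paper absorbs the principal part and the cross term via the convention $G_0=-1$, while you track $\fr{1}{u^4}$ and the factor $12\sum a_nu^{6n-4}$ explicitly, which is only a bookkeeping difference), then run the same induction from $G_6=\fr{27}{140}>0$ and read off the sign of $D_{6m-1}$ from the second identity of \eqref{D}. No gaps.
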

\begin{proof}
Since $\wp''=6\wp^2$ and $\x{u}=\sum_{n=0}^\infty(6n-1)G_{6n}u^{6n-2}$, for $n\geq 1$, we have 
$$
(6n-1)(6n-2)(6n-3)G_{6n}=6\sum_{\substack{m+k=n\\ k,m\geq 0}}G_{6k}(6k-1)G_{6m}(6m-1)=6\sum_{k=1}^{n-1}(6k-1)(6n-6k-1)G_{6k}G_{6(n-k)}+12G_{6n}(6n-1),
$$
here, note that $G_{0}=-1$. Then, we have the equality \eqref{rG}.
By the proof of Lemma~\ref{ee} \eqref{weq}, we have $G_{6}=\fr{27}{140}>0$.
The positivity of $G_{6n}$ for $n\geq 1$ follows by induction on $n$, \eqref{rG} and $G_{6}>0$. 
Then, by the second equality of \eqref{D}, we have
$$
(-1)^mD_{6m-1}=(-1)^m\fr{(-1)^m3^{3m-1}-1}{2\cdot3^{6m-1}}G_{6m}=\fr{3^{3m-1}-(-1)^m}{2\cdot3^{6m-1}}G_{6m}>0
$$
for $m\geq 1$.
\end{proof}
\begin{definition}[Hurwitz coefficient]
For a formal power series $F(u)=\sum_{n=0}^{\infty}\fr{a_{n}}{n!}u^n \in \Q[[u]]$ in terms of $u$, we call $a_{n}$ the {\it Hurwitz coefficient} for $F(u)$ of $u^n$.
\end{definition}

\begin{lemma} 
\begin{enumerate}[(1)] 
\item \label{rc}
We have
$$
2\cdot 3^{6n}c_{6n+1}=2BH_{6n}(6n+1)(6n-1)-3^{6n}(6n+1)2n(6n-1)c_{6n-2}+\sum_{k=1}^n
c_{6(n-k)+1}BH_{6k}(6k-1)(6k-2)3^{6(n-k)}\dbinom{6n+1}{6k}
$$
for $n\geq 1$.

$$
2\cdot 3^{6n}c_{6n+4}=-(6n+4)(2n+1)(6n+2)3^{6n}c_{6n+1}+\sum_{k=1}^n c_{6(n-k)+4}BH_{6k}(6k-1)(6k-2)3^{6(n-k)}\dbinom{6n+4}{6k}
$$
for $n \geq 1$.
\item (The sign and the non-vanishing of $c_{n}$)\label{sc} We have $(-1)^{k+1}c_{k}>0$ for $k\geq 1$, i.e., $c_{6n+1}>0,\ c_{6n+4}<0$ for $n\geq 0$. In other words, all the Hurwitz coefficients for $-\Sl(-u)$ are $>0$. In particular, $c_{3n+1}$ does not vanish.
\item (The sign and the non-vanishing of $e_{n}$)\label{se}
We put $\Cl(u)=:\sum_{n=0}^{\infty}\fr{e_{3n}}{(3n)!}u^{3n}$, then $(-1)^{n}e_{3n}>0$ for $n\geq 0$. Equivalently, $e_{6n}>0,\ e_{6n+3}<0$ for $n\geq 0$. In particular, $e_{3n}$ does not vanish.

\end{enumerate}
\end{lemma}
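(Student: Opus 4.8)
I would read both recurrences off a single closed identity relating $\Sl$ to the Weierstrass functions. From $\Sl(u)=\varphi(\fr{u}{-3\pe})$, Lemma~\ref{pro}~\eqref{phi}, and the parities in Lemma~\ref{ee}~\eqref{rwp}, one first obtains
$$
\Sl(u)\left(9-\y{\fr{u}{3}}\right)=6\,\x{\fr{u}{3}}.
$$
Into this I substitute the Laurent expansions $\x{w}=w^{-2}+\sum_{k\ge1}(6k-1)G_{6k}w^{6k-2}$ and $\y{w}=-2w^{-3}+\sum_{k\ge1}(6k-1)(6k-2)G_{6k}w^{6k-3}$ with $w=\fr{u}{3}$, together with $\Sl(u)=\sum_{m\ge0}c_{3m+1}\fr{u^{3m+1}}{(3m+1)!}$, and write $G_{6k}=BH_{6k}/(6k)!$. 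All exponents occurring are $\equiv1\bmod3$, so only the two residue classes $u^{6n-2}$ and $u^{6n+1}$ modulo $u^3$ need be examined. Comparing the coefficient of $u^{6n-2}$ isolates $c_{6n+1}$ (its contribution comes from the triple pole of $\y{\fr{u}{3}}$) and produces the first recurrence, while comparing the coefficient of $u^{6n+1}$ isolates $c_{6n+4}$ and produces the second; the convolution of the two exponential generating series is exactly what creates the multinomial factors $\binom{6n+1}{6k}$ and $\binom{6n+4}{6k}$, and the scaling $w=\fr{u}{3}$ supplies the powers of $3$. I expect this coefficient bookkeeping---matching the pole terms of $\y{\fr{u}{3}}$ against the leading behaviour of $\Sl$, and passing cleanly between the $C,G$ and the $c,BH$ normalizations---to be the only genuinely delicate step.

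\textbf{The sign of $c_n$.} With the recurrences in hand this is a strong induction on $n$, using $BH_{6k}>0$ for $k\ge1$ (Lemma~\ref{sGD}) and the base values $c_1=1$ and $c_4=-4$, which follow from $\Sl'(0)=\Cl(0)^2=1$ and \eqref{des}. Assuming $c_{6j+1}>0$ and $c_{6j+4}<0$ for all smaller indices, in the first recurrence every summand on the right is positive: the leading term because $BH_{6n}>0$; the middle term because $c_{6n-2}=c_{6(n-1)+4}<0$ is multiplied by a negative coefficient; and each term of the sum because $c_{6(n-k)+1}>0$ (with $c_1=1$ at $k=n$) while $(6k-1)(6k-2)BH_{6k}\binom{6n+1}{6k}>0$. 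Hence $c_{6n+1}>0$. Dually, in the second recurrence the leading term is negative (as $c_{6n+1}>0$) and every term of the sum is negative (as $c_{6(n-k)+4}<0$, with $c_4=-4$ at $k=n$), so $c_{6n+4}<0$. Because no cancellation occurs, each $c_{3m+1}$ is nonzero of sign $(-1)^m$; the reformulation is then immediate, since the $u^{3m+1}$-Hurwitz coefficient of $-\Sl(-u)$ equals $(-1)^mc_{3m+1}$, of sign $(-1)^m\cdot(-1)^m>0$.

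\textbf{The sign of $e_{3n}$.} Here I would bypass the $BH$-recurrences and work from the companion differential equation $\Cl^{(3)}=4\Cl-6\Cl^4$, obtained exactly as \eqref{des} from Lemma~\ref{pro}~\eqref{d} and~\eqref{h}. Writing $\Cl(u)=\sum_{n\ge0}e_{3n}\fr{u^{3n}}{(3n)!}$ and comparing Hurwitz coefficients of $u^{3n}$ gives
$$
e_{3n+3}=4e_{3n}-6\sum_{j_1+j_2+j_3+j_4=n}\binom{3n}{3j_1,3j_2,3j_3,3j_4}e_{3j_1}e_{3j_2}e_{3j_3}e_{3j_4}.
$$
The four terms in which a single index equals $n$ contribute $4e_{3n}$ (using $e_0=1$); extracting them rewrites this as $e_{3n+3}=-20e_{3n}-6R_n$, where $R_n$ gathers the terms with all $j_i<n$. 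Assuming inductively that $e_{3j}$ has sign $(-1)^j$ for $j\le n$, every product in $R_n$ has sign $(-1)^{j_1+j_2+j_3+j_4}=(-1)^n$, so $-6R_n$ has sign $(-1)^{n+1}$; and $-20e_{3n}$ also has sign $(-1)^{n+1}$. The two contributions reinforce rather than cancel, so $e_{3n+3}$ is nonzero of sign $(-1)^{n+1}$, completing the induction from the base cases $e_0=1$ and $e_3=-2$. The same ``extract the diagonal, then note the remaining terms share a single sign'' device also yields a self-contained alternative for the sign of $c_n$ directly from \eqref{des}, should one wish to avoid the $BH$-recurrences there as well.
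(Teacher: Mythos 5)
Your treatment of parts (1) and (2) is correct and is essentially the paper's own argument: the paper also starts from $\Sl(u)\bigl(9-\y{\fr{u}{3}}\bigr)=6\,\x{\fr{u}{3}}$ (after clearing the pole by multiplying through by $u^2/9$, giving its displayed identity \eqref{recc}), splits into the two congruence classes of exponents mod $6$, and compares coefficients to get the two recurrences; the sign induction via $BH_{6k}>0$ and the base values $c_1=1$, $c_4=-4$ is identical. For part (3) you take a genuinely different route. The paper does not set up a recurrence at all: it simply notes $\Cl'(-u)=-(-\Sl(-u))^2$, so that by part (2) all Hurwitz coefficients of $\Cl'(-u)$ are $<0$ (Hurwitz-positive series being closed under products), which reads off $(-1)^n e_{3n}>0$ directly from $\Cl'(-u)=\sum_{n\geq 1}e_{3n}(-1)^{3n-1}\fr{u^{3n-1}}{(3n-1)!}$. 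Your alternative, via the companion equation $\Cl^{(3)}=4\Cl-6\Cl^4$ and the induction $e_{3n+3}=-20e_{3n}-6R_n$ with all terms of $R_n$ sharing the sign $(-1)^n$, is also valid (note $R_1$ is empty, which is harmless since the $-20e_{3n}$ term alone carries the sign); it has the merit of being independent of part (2), at the cost of being longer than the paper's one-line deduction. The only blemishes are cosmetic: the two exponent classes you compare are distinguished modulo $u^6$, not $u^3$ (both are $\equiv 1 \bmod 3$), and this does not affect the argument.
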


\begin{proof}
\eqref{rc}
By $\Sl(u)=\vp{\fr{u}{-3\pe}}=\fr{6\x{\fr{u}{3}}}{9-\y{\fr{u}{3}}}$ (Lemma~\ref{pro} \eqref{phi}),
$\Sl(u)=\sum_{n=0}^\infty C_{3n+1}u^{3n+1}=\sum_{n=0}^\infty C_{6n+1}u^{6n+1}+\sum_{n=0}^\infty C_{6n+4}u^{6n+4}$,
$\x{u}=\sum_{n=0}^{\infty}G_{6n}(6n-1)u^{6n-2}$
and $\y{u}=\sum_{n=0}^{\infty}G_{6n}(6n-1)(6n-2)u^{6n-3}$, we have 
\begin{equation} \label{recc}
\left(\sum_{n=0}^\infty C_{6n+1}u^{6n}+\sum_{n=0}^\infty C_{6n+4}u^{6n+3}\right)\left(u^3-\sum_{n=0}^\infty G_{6n}(6n-1)(6n-2)\fr{u^{6n}}{3^{6n-1}}\right)=6\sum_{n=0}^\infty G_{6n}(6n-1)\fr{u^{6n}}{3^{6n}}.
\end{equation}
Taking the even part of \eqref{recc},
$$
\sum_{n=0}^\infty C_{6n+1}u^{6n}\left(-\sum_{n=0}^\infty G_{6n}(6n-1)(6n-2)\fr{u^{6n}}{3^{6n-1}}\right)+\sum_{n=0}^\infty C_{6n+4}u^{6n+3}\cdot u^3=6\sum_{n=0}^\infty G_{6n}(6n-1)\fr{u^{6n}}{3^{6n}}
$$
holds. By comparing the coefficients of $u^{6n}$, we have 
$$
3^{6n-1}C_{6n-2}=2G_{6n}(6n-1)-2\cdot 3^{6n}C_{6n+1}+\sum_{k=1}^n
C_{6(n-k)+1}G_{6k}(6k-1)(6k-2)3^{6(n-k)}
$$
for $n\geq 1$. Since $C_{k}=\fr{c_{k}}{k!},\ G_{6k}=\fr{BH_{6k}}{(6k)!}$, we also have
$$
3^{6n}(6n+1)2n(6n-1)c_{6n-2}=2BH_{6n}(6n+1)(6n-1)-2\cdot 3^{6n}c_{6n+1}+\sum_{k=1}^n
c_{6(n-k)+1}BH_{6k}(6k-1)(6k-2)3^{6(n-k)}\dbinom{6n+1}{6k}
$$
for $n\geq 1$. This is the first equality of \eqref{rc}.
Taking the odd part of \eqref{recc}, 
$$
\sum_{n=0}^\infty C_{6n+1}u^{6n}\cdot u^3+\sum_{n=0}^\infty C_{6n+4}u^{6n+3}\left(-\sum_{n=0}^\infty G_{6n}(6n-1)(6n-2)\fr{u^{6n}}{3^{6n-1}}\right)=0.
$$
By comparing the coefficients of $u^{6n+3}$, we have 
$$
C_{6n+1}=-6C_{6n+4}+\sum_{k=1}^n C_{6(n-k)+4}G_{6k}(6k-1)(6k-2)\fr{1}{3^{6k-1}}.
$$
Therefore, 
$$
(6n+4)(2n+1)(6n+2)3^{6n}c_{6n+1}=-2\cdot 3^{6n}c_{6n+4}+\sum_{k=1}^n c_{6(n-k)+4}BH_{6k}(6k-1)(6k-2)3^{6(n-k)}\dbinom{6n+4}{6k}
$$
follows. This is the second equality of \eqref{rc}.

\eqref{sc} From Table~\ref{tc}, $c_{1}=1>0, c_{4}=-4<0$. We assume that $c_{1},c_{7}, \ldots, c_{6(n-1)+1}>0$ and $c_{4}, c_{10}, \ldots, c_{6(n-1)+4}<0$ for some $n\geq 1$. Then, since the first equality of \eqref{rc} and $BH_{6n}>0\ (n \geq 1)$ (by Lemma~\ref{sGD}), we have $c_{6n+1}>0$.
We also have $c_{6n+4}<0$, since the second equality of \eqref{rc}. By induction on $n$, \eqref{sc} is proved.

\eqref{se} 
By \eqref{sc}, for all $n\geq 0$, the Hurwitz coefficient for $-\Sl(-u)$ of $u^{3n+1}$ is $>0$. Since $\Cl'(-u)=-(-\Sl(-u))^2$ for all $n\geq 1$, the Hurwitz coefficient for $\Cl'(-u)=\sum_{n=1}^{\infty}e_{3n}(-1)^{3n-1}\fr{u^{3n-1}}{(3n-1)!}$ is $<0$.
Therefore, we have $(-1)^{3n}e_{3n}>0$. Since $\Cl(0)=1$, $e_{0}=1$.
Thus, $(-1)^{n}e_{3n}>0$ holds for $n\geq 0$.

\end{proof}

\begin{remark}
On the sign of $D_{6n+2}$ for $n\geq 0$, it is conjectured that $(-1)^nD_{6n+2}>0$, which is still open.
To prove this, we may have to relate it to some $L$-value to determine the sign of $D_{6n+2}$.
\end{remark}


\newpage
\subsection{Numerical tables of $c_{n}, d_{n}, G_{6n}$, and $BH_{6n}$}

\begin{table}[hbtp]
\caption{$c_{n}$}
\label{tc}
\centering
  $\begin{array}{lr}
    \hline
     n   &   c_{n}   \\
    \hline \hline
    1  &  1 \\
    4 &  -4  \\
    7 & 160 \\
   10 &  -20800  \\
   13 &  6476800 \\
16&  -3946624000  \\
19&  4161608704000 \\
22&  -6974121256960000  \\
25&  17455222222028800000 \\
28&  -62226770432344883200000  \\
31&  304379186781653598208000000 \\
34&  -1982049657077223312916480000000  \\
37&  16758824127564135479341219840000000 \\
40&  -180180787889254711099024290611200000000  \\
43&  2419729547280670262758159337861939200000000 \\
46&  -39971145354912684332749031990873817088000000000  \\
49&  801380022229927863218064428825418221486080000000000 \\
52&  -19272532158166604513119104829337619755759042560000000000  \\
55&  550209191558546672649809313730688966642337474150400000000000  \\
58&  -18474617726618802329201889210788021182113749879750656000000000000  \\
61&  723544590939960717069350289218516137325917930092538888192000000000000 \\
64& -32804563051713714135252946913038615626684567102838010344898560000000000000  \\
67& 
1710106255619904534930025572360427397429088633914126219002154844160000000000000\\
    \hline
  \end{array}$
\end{table}

\newpage

\begin{table}[htbp]
  \caption{$d_{n}$}
  \label{td}
  \centering
$
 \begin{array}{lr}
    \hline
    n  &  d_{n}  \\
    \hline \hline
    2  &  1/3\\
    &= 1/3\\
5& -10/21\\
    &= -2\cdot5/3\cdot7\\
8& -80/9\\
    &= -2^{4}\cdot5/3^{2}\\
11& 48400/273\\
     &= 2^{4}\cdot5^{2}\cdot11^{2}/3\cdot7\cdot13\\
14& 70400/3\\
     &= 2^{8}\cdot5^{2}\cdot11/3\\
17& -2309824000/1197\\
     &= -2^{9}\cdot5^{3}\cdot11\cdot17\cdot193/3^{2}\cdot7\cdot19\\
20& -2393600000/3\\
     &= -2^{12}\cdot5^{5}\cdot11\cdot17/3\\
23& 46641833216000/273\\
     &= 2^{11}\cdot5^{3}\cdot11^{2}\cdot17\cdot23\cdot3851/3\cdot7\cdot13\\
26& 4350492467200000/27\\
     &= 2^{16}\cdot5^{5}\cdot11^{2}\cdot17\cdot23\cdot449/3^{3}\\
29& -46343596783616000000/651\\
     &= -2^{17}\cdot5^{6}\cdot11^{2}\cdot17\cdot23\cdot29\cdot16493/3\cdot7\cdot31\\
32& -384236269846528000000/3\\
     &= -2^{20}\cdot5^{6}\cdot11^{2}\cdot17\cdot23\cdot29\cdot17093/3\\
35& 58356799095178616750080000000/575757\\
     &= 2^{20}\cdot5^{7}\cdot11^{3}\cdot17^{2}\cdot23\cdot29\cdot43\cdot1871\cdot34511/3^{2}\cdot7\cdot13\cdot19\cdot37\\
38& 930458061131469291520000000/3\\
     &= 2^{24}\cdot5^{7}\cdot11^{3}\cdot17^{2}\cdot23\cdot29\cdot701\cdot3947/3\\
41& -2526876563810119056660889600000000/6321\\
     &= -2^{25}\cdot5^{8}\cdot11^{3}\cdot17^{2}\cdot23\cdot29\cdot41\cdot431\cdot42521761/3\cdot7^{2}\cdot43\\
44& -17284095511545036218564608000000000/9\\
     &= -2^{28}\cdot5^{9}\cdot11^{4}\cdot17^{2}\cdot23\cdot29\cdot41\cdot331\cdot881\cdot977/3^{2}\\
47& 1027896133477097691463532176998400000000/273\\
     &= 2^{26}\cdot5^{8}\cdot11^{4}\cdot17^{2}\cdot23^{2}\cdot29\cdot41\cdot47\cdot313\cdot1001523179/3\cdot7\cdot13\\
50& 80306491124224476202875370864640000000000/3\\
     &= 2^{32}\cdot5^{10}\cdot11^{5}\cdot17^{2}\cdot23^{2}\cdot29\cdot41\cdot47\cdot61\cdot2029\cdot11243/3\\
53& -272275412126083890645192489250796339200000000000/3591\\
     &= -2^{33}\cdot5^{11}\cdot11^{4}\cdot17^{3}\cdot23^{2}\cdot29\cdot41\cdot47\cdot53\cdot1201\cdot4795973261/3^{3}\cdot7\cdot19\\
56& -2287147842776869352629978007993738854400000000000/3\\
     &= -2^{36}\cdot5^{11}\cdot11^{5}\cdot17^{3}\cdot23^{2}\cdot29\cdot41\cdot47\cdot53\cdot137\cdot4013306567/3\\
    \hline
 \end{array}
$
\end{table}

\newpage

\begin{table}[hbtp]
  \caption{$G_{6n}$, and $BH_{6n}$}
  \label{tg}
  \centering
$
\begin{array}{lrr}
    \hline
    n  &  G_{n} & BH_{n}  \\
    \hline \hline
6 & 27/140 &972/7  \\
12& 729/112112 & 283435200/91\\
18 &19683/184352896 & 90914674752000/133\\
24& 9034497/2269709478400 & 224740203206066380800/91\\
30& 961376769/9936195998310400 & 5569206003076317032217600000/217\\
36& 45251875376667/15343234021377167360000 & 491303185206504946068649519911075840000/447811\\
42& 3409353379806993/43280194527500713689088000 & 33314057040643263742571290653335322624000000/301\\

\hline
 \end{array}
$
\end{table}




\end{document}